\newtheorem{theorem}{Theorem}[section]
\newtheorem{proposition}[theorem]{Proposition}
\newtheorem{lemma}[theorem]{Lemma}
\newtheorem{corollary}[theorem]{Corollary}
\newtheorem{example}[theorem]{Example}
\newtheorem{remark}[theorem]{Remark}
\newenvironment{proof}{\medbreak\noindent{\it Proof.}\rm}{\hfill$\square$\rm}
 \newcommand{\beq}{\begin{equation}}
\newcommand{\eeq}{\end{equation}}
\numberwithin{equation}{section}
\newcommand{\R}{{ \mathbb R}}
\newcommand{\Rnp}{{\mathbb R}_+^n}
\newcommand{\Rnm}{{\mathbb R}_-^n}
\newcommand{\Q}{{ \mathbb Q}}
\newcommand{\C}{{\mathbb  C}}
\newcommand{\Cn}{{\mathbb  C}^n}
\newcommand{\D}{{\mathbb D}}
\newcommand{\A}{{\mathbb  A}}
\newcommand{\Bn}{{\mathbb B}^n}
\newcommand{\B}{{\mathbb B}}
\newcommand{\Bone}{\mathbbm{1}}
\newcommand{\cB}{{\mathcal B}}
\newcommand{\F}{{\mathcal F}}
\newcommand{\E}{{\mathcal E}}
\newcommand{\cG}{{\mathcal G}}
\newcommand{\K}{{\mathcal K}}
\newcommand{\N}{{\mathcal N}}
\newcommand{\I}{{\mathcal I}}
\newcommand{\cO}{{\mathcal O}}
\newcommand{\cP}{{\mathcal P}}
\newcommand{\cS}{{\mathcal S}}
\newcommand{\fb}{{\mathfrak b}}
\newcommand{\PSH}{{\operatorname{PSH}}}
\newcommand{\DSH}{{\operatorname{SH}}}
\newcommand{\SH}{{\operatorname{SH}}}
\newcommand{\codim}{{\operatorname{codim}}}
\newcommand{\Capa}{{\operatorname{Cap}\,}}
\newcommand{\Ree}{{\operatorname{Re}\,}}
\begin{document}


\begin{center}
{\Large\bf Rooftop envelopes and residual plurisubharmonic functions}
\end{center}

\begin{center}
{\large\bf Alexander Rashkovskii}
\end{center}

\begin{abstract} Given a negative plurisubharmonic function $\phi$ in a bounded pseudoconvex domain of $\Cn$, we introduce and study its {\it residual function} $g_\phi$ determined by the asymptotic behavior of $\phi$ near its singularity points, both inside the domain and on its boundary. For certain choices of $\phi$, the function $g_\phi$ coincides with different versions of pluricomplex Green functions. The considerations are motivated by a problem on when two given plurisubharmonic functions can be connected by a plurisubharmonic geodesic.

\medskip\noindent
{\sl Mathematic Subject Classification}: 32U05, 32U15, 32U35, 32W20
\end{abstract}

\section{Introduction}
The standard pluricomplex function $G_a=G_{a,D}$ of a bounded pseudoconvex domain $D$ of $\Cn$ with pole at a point $a\in D$ is the upper envelope of all $u\in\PSH^-(D)$, that is, negative plurisubharmonic ({\it psh}) functions in $D$, such that $u(z)\le \log|z-a|+O(1)$ near $a$ \cite{Kl}. A generalized version due to Zahariuta  \cite{Za0} is obtained by replacing $\log|z-a|$ with a psh function $\phi$, locally bounded and maximal in a punctured neighborhood of $a$; one can also drop the maximality condition \cite{R06}, consider functions with finitely or denumerably many poles \cite{Za0}, \cite{Lel}, \cite{Ze}, \cite{R06}, and even with singularities along arbitrary analytic varieties \cite{LS}, \cite{RSig2}.
In all those constructions, the psh functions in question are subject to {\sl local} conditions near their singularity points. In this paper, we exploit a {\sl global} one.

Given $\phi\in\PSH^-(D)$, we introduce the function
\beq\label{GP0} g_{\phi}(z)=g_{\phi,D}(z)=\limsup_{x\to z}\sup\{v(x):\: v\in\PSH^-(D),\ v\le \phi + C_v\}.\eeq
This is a psh function determined essentially by the asymptotic behavior of $\phi$ near its unboundedness points. When $\phi=\log|z-a|$, $a\in D$, this gives us the pluricomplex Green function $G_a$. At the other extremity, if $D$ is the unit disk $\D\subset\C$ and $\phi$ is the negative Poisson kernel with pole at a boundary point,  then $g_\phi=\phi$. In the general case, the picture can be much more complicated. Since the singularities can lie both inside the domain and on its boundary, we call (\ref{GP0}) {\it the Green-Poisson residual function} of $\phi$ for the domain $D$.

Our considerations are motivated by a question of possibility of connecting pairs of psh  functions by psh geodesics, which reduces to the following
{\it Connectivity Problem: Given $u_0,u_1\in\PSH^-(D)\subset \Cn$, does there exist $u(z,\zeta)=u(z,|\zeta|)\in\PSH^-(D\times{\mathbb A})$, where ${\mathbb A}=\{\zeta\in\C: \, 1<|\zeta|<e\}$, such that $u(z,\zeta)\to u_j(z)$ as $|\zeta|\to e^j$, $j=0,1$? }

The origins lie in studying K\"{a}hler metrics on compact complex manifolds $(X,\omega)$. Such metrics are given as $e^{-u}$ for smooth quasi-psh functions $u$ on $X$ satisfying $\omega+dd^c u>0$, and geodesics in the space of the metrics correspond to the functions $u_t=u_{\log|\zeta|}$ on $X\times \mathbb A$, satisfying there the corresponding homogeneous complex Monge-Amp\`ere equation and having $u_0$ and $u_1$ as their boundary values. By arguments due to Berndtsson \cite{B15}, the Dirichlet problem has a unique solution; since {\sl a priori} it satisfies only $\omega+dd^cu_t\ge 0$ and need not be smooth, it is called a {\it weak geodesic}, and the corresponding metric might be singular. The arguments still work for arbitrary bounded quasi-psh data, however allowing singularities makes the problem highly non-trivial. It was shown in \cite{Da14a} that $u_0$ and $u_1$ can be connected if and only if $P_\omega[u_1](u_0)=u_0$ and $P_\omega[u_0](u_1)=u_1$ (and, in particular, this is the case when both the functions have full non-pluripolar Monge-Amp\`ere mass). Here, given a smooth closed $(1,1)$-form $\theta$, $P_\theta[u](v)$ is the {\it asymptotic rooftop of $u$ with respect to singularity $v$}, introduced (in the K\"{a}hler case $\theta=\omega$) in \cite{RWN}: denoting by $P_\theta(u,v)$ the {\it rooftop envelope} of $\theta$-psh functions $u$ and $v$ (i.e., the largest $\theta$-psh minorant of $\min\{u,v\}$),
$$P_\theta[v](u)={\sup_{C\in\R}}^* P_\theta(u,v+C),$$
where $\sup^*$ stands for the upper semicontinuous regularization of the envelope as in (\ref{GP0}); later on, we will use the similar denotation ${\lim}^*$.

This was continued in \cite{DNL18}--\cite{DNL19}, \cite{Mc}, \cite{McT}, \cite{RWN17}, and other recent papers. We would like especially to refer to \cite{DNL18a} where the following was proved: for $\theta$-psh functions $u$ and $v$ such that $\theta_v^n:=(\theta+dd^c v)^n$ has positive non-pluripolar mass and $u\le v+C$ (in other words, $u$ has a stronger singularity than $v$), the condition $P_\theta[u](v)=v$ is equivalent to the equality
$P_\theta[u](0)=P_\theta[v](0)$ (the {\it envelopes of the singularity types} of $u$ and $v$ coincide), as well as to the equality of their total non-pluripolar Monge-Amp\`ere masses: $\theta_u^n(X) =\theta_v^n(X)$. The proof rests heavily on a control over the total non-pluripolar Monge-Amp\`ere masses of $\theta$-psh functions, including their monotonicity.

In the local setting of psh functions on bounded domains  $D\subset\Cn$ (where the flat case $\theta=0$ is natural), the geodesics were considered in \cite{BB}, \cite{R16}, \cite{A}. In particular, Berndtsson's argument still works for bounded psh functions, and Darvas' construction from \cite{Da14a} was shown in \cite{R16} to work for functions $\phi$ from the Cegrell class $\F_1(D)$. We will recall definitions of this and other Cegrell's classes in Section~\ref{sec:Ceg}; here we just mention that any $\phi\in\F_1(D)$ is a negative psh function with well-defined Monge-Amp\`ere operator $(dd^c\phi)^n$, satisfying
\beq\label{F1} \int_D (1+|\phi|)(dd^c\phi)^n<\infty\eeq
(so the functions have finite both the total Monge-Amp\`ere mass and energy) and whose least maximal psh majorant in $D$ is identical zero.

 As we will see, there are two essential properties of functions $\phi\in\F_1(D)$ that make this happen: they do not have `bad' singularities on the boundary $\partial D$ of $D$, and
 the measures $(dd^c\phi)^n$  do not charge pluripolar sets. This turns out to be of crucial importance. It  was shown in \cite{R16} that if $u_0,u_1\in\PSH^-(D)$ have  strong singularities at isolated points (in the sense that their Monge-Amp\`ere measure charge the points) that are essentially different, then, for any corresponding subgeodesic, its limit values at the endpoints are strictly less than the corresponding data $u_0$ and $u_1$. For example, if $u_j=G_{a_j}$, then the largest subgeodesic is independent of $t$ and it equals the pluricomplex Green function $G_{\{a_0,a_1\}}$ with logarithmic poles at $a_0$ and $a_1$. Note that the classical pluricomplex Green function $G_a$ is, in the terminology of \cite{DNL18a}, the {\it envelope of singularity} $P_0[\phi](0)$ of $\phi(z)=\log|z-a|$, and the
 pluricomplex Green function $G_A$ for a finite set $A\subset D$ is exactly the envelope of the singularity of the function
$\phi(z)= \sum_{a\in A} \log|z-a|$.

While considering $\max\{\phi,\psi\}$ for psh functions $\phi$ and $\psi$ is standard in pluripotential theory, taking their rooftop envelope $P(\phi,\psi)=P_0(\phi,\psi)$, the largest psh minorant of $\min\{\phi,\psi\}$, is a perfectly natural operation that is, so far, insufficiently well studied.
For psh singularities, $\max\{\phi,\psi\}$ is a psh version of the greatest common divisor, and $P(\phi,\psi)$ corresponds to the least common multiple.
Together with addition, each of them generates an idempotent semiring of psh functions, which in the toric case is dual to a semiring of unbounded convex sets in $\Rnp$, see Example~\ref{indic}. Note also that the Green functions mentioned above are related by $G_{\{a_0,a_1\}}=P(G_{a_0},G_{a_1})$.

The asymptotic rooftop
$P[\phi](\psi)=P_0[\phi](\psi)$ of
$\psi$ with respect to singularity of $\phi$ is an extremal psh function for a Phragm\'{e}n-Lindel\"{o}f type problem in the spirit of \cite{MTV}:  the best upper bound on psh functions $v$ satisfying the pointwise inequality $v\le \psi$ and an asymptotic bound $v\le \phi+O(1)$.
In classical potential theory, similar asymptotic rooftops of subharmonic (or rather superharmonic) functions were considered by Parreau \cite{Pa} back in 1951 for a problem of approximation of unbounded positive harmonic functions by unbounded ones, and then in \cite{Y}, \cite{AL} and, for psh functions, in \cite{NW}; see the details in Remark~\ref{rem:parreau}.

By a minor adaption of Darvas' construction, one concludes that the geodesic connectivity of $u_0,u_1\in\PSH^-(D)$ can be checked by the conditions
\beq\label{Darv0} P[u_1](u_0)=u_0\quad  {\rm and}\quad P[u_0](u_1)=u_1,\eeq
see Theorem~\ref{prop_geod}. However, in contrast to the compact case, this is no longer equivalent to the equality of the total Monge-Amp\`ere masses of $u_0$ and $u_1$, either non-pluripolar or full (when the latter is well defined).
We would like then to compare (\ref{Darv0}) with the equality of the envelopes of singularity types, which in terms of the Green-Poisson functions (\ref{GP0}) reads as
\beq\label{eqs}g_{u_0}= g_{u_1};\eeq
its equivalence to (\ref{Darv0}) was conjectured by the author in a correspondence with G.~Hosono about an early version of \cite{H}. In that paper, it was actually shown that if $u_j$ are {\sl toric} psh functions equal to zero on $\partial D$, then relations (\ref{Darv0}) hold true if and only if all the directional Lelong numbers of $u_0$ are equal to those of $u_1$, which in this case, by \cite{R13b}, means exactly (\ref{eqs}). The proof in \cite{H} is based on a machinery of convex analysis and its applications to pluripotential theory indicated in \cite{Gu} and cannot be applied to the non-toric situation.

When $\phi\ge g_\phi+C$ (corresponding to the case of $\phi$ with {\it model singularity} \cite{DNL18a}), we have
\beq\label{main0} P[\phi](\psi) =P(\psi, g_\phi)\eeq
for any psh $\psi$, which gives us (\ref{Darv0}) for any model $u_0$ and $u_1$ satisfying (\ref{eqs}). It raises the question if (\ref{main0}) is true for non-model singularities as well.

With this perspective in mind, the main focus of the paper is on studying the Green-Poisson residual functions $g_\phi$ of negative psh functions $\phi$ on B-regular domains $D$ of $\Cn$. There are several challenges in such a setting distinguishing it from the one on compact manifolds. First, $\phi\in\PSH^-(D)$ can have its singularities on the boundary, and theory of boundary behavior of such psh functions is at the moment underdeveloped. Another issue is the lack of control over the total Monge-Amp\`ere mass of $\phi$ (even if $(dd^c\phi)^n$ is well defined) and of monotonicity for its non-pluripolar part, the central tools used in the compact case \cite{DNL18}--\cite{DNL19}.

It is easy to see that $g_\phi$ is maximal outside the unbounded locus $L(\phi)$ of $\phi$ and has zero boundary values outside the boundary unbounded locus $BL(\phi)$, however extending this to $D\setminus L(g_\phi)$ and $\partial D\setminus BL(g_\phi)$, respectively, is not that simple. By using a generalization of the classical comparison principle for bounded psh functions, we are able to do that in the case of $\phi$ with {\it small unbounded locus}, which in this setting means that both $L(\phi)$ and $BL(\phi)$ are pluripolar sets. Furthermore, we require the smallness also for proving the {\it idempotency} $g_{g_\phi}=g_\phi$ of the Green-Poisson function, a property of fundamental importance. Note that its compact counterpart, $P_\theta[P_\theta[u](0)](0)=P_\theta[u](0)$, is known to be true for any $\theta$-psh $u$ with positive non-pluripolar mass of $\theta_u^n$ \cite{DNL18a}.

We also compare the Green-Poisson function $g_\phi$ with the Green and Poisson functions $g_\phi^o$ and $g_\phi^b$, constructed from the asymptotic behavior of $\phi$ near $L(\phi)$ and $BL(\phi)$, respectively, and we relate the three residual functions to the pluricomplex Green functions with analytic singularities of positive dimension.

A bit more can be said in the case of $\phi$ from the Cegrell class $\E(D)$, which is the largest class of negative psh functions in the domain of definition of the Monge-Amp\`ere operator. We show that, in this case, $(dd^c g_\phi)^n$ is the residual part of $(dd^c\phi)^n$ at the set $\{ g_\phi=-\infty\}$. If, in addition, the least maximal psh majorant $\fb\phi$ of $\phi$ in $D$ is identical zero, then $g_\phi$ is idempotent and coincides with $g_\phi^o$, while $g_\phi^b=0$. When $\fb\phi\neq0$, it can be considered as a representation for the `boundary values' of $\phi$, like it is done by the least harmonic majorant of a subharmonic function in a domain of the complex plane. According to \cite{Ce08}, a function $\phi$ belongs to the class $\N(D,\fb \phi)$ if $\phi\ge \fb\phi+w$ for some $w\in\E(D)$ with $\fb w=0$; for example, this is so if $(dd^c\phi)^n(D)<\infty$.  For such $\phi$, we show that $\fb g_\phi=g_{\fb\phi}= g_\phi^b$ and $g_\phi\in\N(D,\fb g_\phi)$. Furthermore, if $\fb\phi$ has small unbounded locus, then $(dd^c g_\phi^o)^n= (dd^c g_\phi)^n$ and if, in addition, $\phi$ has finite total residual Monge-Amp\`ere mass, then  $g_\phi=P(g_\phi^o, g_\phi^b)$,  which is a nonlinear analog of the Poisson-Jensen formula for the residual functions.

Concerning the asymptotic rooftop envelopes $P[\phi](\psi)$, we have evidently $$P(\phi,\psi)\le P[\phi](\psi) \le P(\psi, g_\phi),$$ which gives us the corresponding relations for their residual functions. Moreover, when $\phi$ and $\psi$ have small unbounded loci, it is easy to see that, actually,
the residual functions of the first two items here coincide, however we do not know if the equality extends to that for the last one, which would be a necessary condition for (\ref{main0}).

We are interested if (\ref{main0})
holds true for all $\phi,\psi\in\PSH^-(D)$ because it would guarantee the implication  (\ref{eqs}) $\Rightarrow$ (\ref{Darv0}). By the reasons explained above, the question turns out to be tricky already for $\phi,\psi\in\E(D)$ with small unbounded loci.
With no counterexamples in hand, we were able to establish it only in a few cases, apart from the obvious one of $\phi$ with model singularity. For example, (\ref{main0}) is true, provided $\phi\ge g_\phi+w$ such that $g_w=0$; this is so, in particular, if $g_\phi=0$, which already handles the case $\phi\in\F_1(D)$ without referring to the Monge-Amp\`ere technique used for proving this in \cite{R16}. Another slight relaxation of the model condition is replacing it with $ g_\phi(z)/\phi(z)\to 1$ as $\phi(z)\to -\infty$, in which case we say that the singularity is {\it approximately model}. In particular, any asymptotically analytic singularity in the sense of \cite{R13a} is approximately model in every $D'\Subset D$ (and in $D$, provided $BL(\phi)=\emptyset$).

We apply this to the connectivity problem for $u_0,u_1\in\PSH^-(D)$, described in the beginning, which, as already said, is equivalent to conditions (\ref{Darv0}). In particular, it is shown that no pair of functions with small unbounded loci and different Green-Poisson functions can be geodesically connected. On the other hand, assuming both $u_0$ and $u_1$ satisfy one of the conditions from the previous paragraph, the equality $g_{u_0}= g_{u_1}$ implies the connectivity.

\medskip
Here is a short summary of the main results of the paper.

(i) The residual Green-Poisson function $g_\phi$ is introduced for arbitrary negative psh functions $\phi$ on bounded B-regular domains of $\Cn$. For $\phi$ with small unbounded loci $L(\phi)\subset D$ and $BL(\phi)\subset\partial D$, it is shown in Theorem~\ref{gphi2} to be maximal outside $L(g_\phi)$ and to have zero boundary values outside $BL(g_\phi)$, as well as to be idempotent: $g_{g_\phi}=g_\phi$. To prove this, we establish in Lemma~\ref{CP} a stronger version of the classical domination principle for bounded psh functions by relaxing the boundary conditions.

(ii) In the case of $\phi$ in the domain $\E$ of definition of the Monge-Amp\`ere operator, we show that $(dd^cg_\phi)^n$ is the residual part of $(dd^c\phi)^n$ on $\{\phi=-\infty\}$ (Proposition~\ref{gphiphi}), while the boundary value function $\fb g_\phi$ of $g_\phi$, in the sense of Cegrell, equals $g_{\fb\phi}$ (Theorem~\ref{bd}).

(iii) When $\phi\in\E$ has finite total Monge-Amp\`ere mass and $\fb\phi$ has small unbounded locus, $g_\phi$ is shown to be the rooftop envelope of the residual functions constructed separately by the singularities of $\phi$ inside $D$ and on $\partial D$ (Corollary~\ref{genv}). In addition, if a sequence $\phi_j$ of such functions with uniformly bounded Monge-Amp\`ere masses increases q.e. to $\phi$, then $g_{\phi_j}$ increase to $g_\phi$ (Theorem~\ref{gincr}).
Proofs of the results in (ii) and (iii) are based on the technique of Cegrell classes.

(iv) We relate the asymptotic rooftop envelopes $P[\phi](\psi)$ to the envelopes $P(\psi,g_\phi)$. While the left hand side does not exceed the right hand side for any $\phi,\psi\in\PSH^-(D)$, the reverse inequality we are able to establish only in few cases: when $\phi\ge g_\phi + w$ with psh $w$ satisfying $g_w=0$, or when $\phi$ has approximately model singularity (Proposition~\ref{propyes}), which already implies a new result for functions from the Cegrell class $\F^a$ (Remark~\ref{remyes}). The principal  challenge in treating the general situation is the absence of comparison of non-pluripolar mass in the local setting.

(v) Finally, we show that the equality $g_\phi=g_\psi$ is a necessary condition for existence of a psh (sub)geodesic connecting $\phi$ and $\psi$ when they have small unbounded loci (Corollary~\ref{corind1}), while it is also a sufficient condition in the cases treated by Proposition~\ref{propyes} (Theorem~\ref{last}). In addition, any $\phi$ can be geodesically connected with its residual function $g_\phi$. This part uses the previous results and an adaptation of technique from  \cite{Da14a}.

\medskip
The presentation is organized as follows. In Section 2, we recall basic properties of the rooftop envelopes $P(u,v)$. In Section~3, we introduce the residual Green-Poisson function $g_\phi$ and study its general properties, including the boundary behavior, idempotency, and interaction with psh structural operations. In Section~4, we compare it with the Green and Poisson functions $g_\phi^o$ and $g_\phi^b$, and relate it to the pluricomplex Green functions with analytic singularities of positive dimension. In Section~5, we consider classes of approximately model and asymptotically analytic singularities. A more detailed information on the residual functions is presented in Section~6 for the functions from Cegrell's classes. In Section~7, we consider the asymptotic rooftop envelopes $P[\phi](\psi)$ and find sufficient conditions for (\ref{main0}). We apply this to the geodesic connectivity problem in Section~8. The final Section~9 presents some open questions on the subjects of the paper. Here we mention just one of them: if $\phi_j\nearrow \phi$, is it true that $P(\phi_j-\phi)\nearrow 0$? This seems to be unknown even for $n=1$, which shows how little we know about such envelopes at all.

\section{Rooftop envelopes}

Rooftop envelopes were explicitly introduced in \cite{RWN} for quasi-psh functions on compact K\"{a}hler manifolds, and
in the local context they were considered in \cite{R16} for functions in the Cegrell class $\F_1$. Here we will be interested in a more general settings of bounded from above psh functions.

In this section, we fix a bounded pseudoconvex domain $D\subset\Cn$. Given a function $h$ on $D$, let $\cB(h,D)= \{v\in\PSH(D):\: v\le h\}$ and
$$P(h)=P_D(h)={\sup}^*\{v\in\cB(h,D)\},$$
that is, the u.s.c. regularization of the function $\hat h={\sup}\,\{v\in \cB(h,D)\}$.
When $h$ is locally bounded from above, the function $P(h)$ is psh and called the {\it psh envelope} of $h$  in $D$. Here are some its elementary properties; for bounded $h$, they were proved in \cite{GLZ}, however the proofs work as well for all $h$ bounded from above, and we present them here for completeness.

\begin{proposition}\label{Ph} Let $h, h_1, h_2, \ldots$ be measurable, locally bounded from above functions on $D$, such that $P(h), P(h_j)\not\equiv -\infty$. Then
\begin{enumerate}
\item[(i)] $P(h)\in\PSH(D)$;
\item[(ii)]  $P(h)\le h$ q.e. (quasi everywhere, i.e., outside a pluripolar subset) in $D$;
\item[(iii)] $P(h)$ equals the upper envelope of the class $\cB^*(h,D)$ of all functions $v\in\PSH(D)$ such that $v\le h$ q.e. in $D$;
\item[(iv)] if $h_j$ decrease to $h$, then $P(h_j)$ decrease to $P(h)$.
\end{enumerate}
\end{proposition}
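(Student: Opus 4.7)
The plan is to handle the four parts in order, since each refines the preceding one. For (i), the hypothesis that $h$ is locally bounded above forces $\hat h:=\sup\cB(h,D)$ to be locally bounded above as well; by Choquet's lemma, $\hat h$ equals the supremum of some countable subfamily $\{v_k\}\subset\cB(h,D)$ off a pluripolar set, and the u.s.c.\ regularization of such a locally bounded above supremum of psh functions is itself psh. For (ii), each $v\in\cB(h,D)$ satisfies $v\le h$ pointwise, so $\hat h\le h$ on all of $D$, and the standard negligibility theorem tells us $\{P(h)>\hat h\}$ is pluripolar; hence $P(h)\le h$ q.e.

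The substantive step is (iii). Denote by $Q$ the upper envelope of $\cB^*(h,D)$; since $\cB(h,D)\subset\cB^*(h,D)$ one has $P(h)\le Q$. For the reverse, I would fix $v\in\cB^*(h,D)$, observe that $E:=\{v>h\}$ is pluripolar, and pick $u\in\PSH^-(D)$ with $E\subset\{u=-\infty\}$. For each $\varepsilon>0$, the psh function $v+\varepsilon u$ satisfies $v+\varepsilon u\le h$ everywhere on $D$ (trivially on $E$, where it equals $-\infty$, and off $E$ because $v\le h$ and $\varepsilon u\le 0$). Hence $v+\varepsilon u\in\cB(h,D)$ and so $v+\varepsilon u\le P(h)$. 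Letting $\varepsilon\to 0^+$ yields $v\le P(h)$ outside the pluripolar set $\{u=-\infty\}$; since $v$ and $P(h)$ are both psh and pluripolar sets have zero Lebesgue measure, comparing sub-mean averages on small balls upgrades this to $v\le P(h)$ pointwise on $D$. Taking the supremum over $v\in\cB^*(h,D)$ yields $Q\le P(h)$.

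For (iv), the monotonicity $h_j\searrow h$ gives $\cB(h_j,D)\supseteq\cB(h_{j+1},D)\supseteq\cB(h,D)$, so $P(h_j)$ decrease to some $\tilde P\ge P(h)$, and $\tilde P$ is psh as a decreasing limit of psh functions with the nontrivial minorant $P(h)$. Applying (ii) to each $h_j$, and recalling that a countable union of pluripolar sets is pluripolar, we obtain $\tilde P\le P(h_j)\le h_j$ off a single pluripolar set for every $j$, hence $\tilde P\le h$ q.e. Then $\tilde P\in\cB^*(h,D)$, and (iii) gives $\tilde P\le P(h)$, concluding the proof. I expect (iii) to be the main obstacle: its role is to absorb, via the cutoff $v+\varepsilon u$, the pluripolar exceptional set on which $v$ fails to lie pointwise below $h$, thereby returning to the strictly pointwise class $\cB(h,D)$.
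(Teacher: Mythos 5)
Your proof is correct and follows essentially the same route as the paper's: the heart of (iii) is the $\varepsilon$-perturbation by a psh function that is $-\infty$ on the pluripolar exceptional set, followed by the a.e.-to-everywhere upgrade, and (iv) is deduced from (ii)--(iii) via countable unions of pluripolar sets exactly as in the paper. The only cosmetic difference is that you apply the perturbation to each individual $v\in\cB^*(h,D)$ and take the supremum afterward, whereas the paper first uses Choquet's lemma to show $\sup\cB^*(h,D)$ itself lies in $\cB^*(h,D)$ and then perturbs that single function.
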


\begin{proof}
Assertions (i) and (ii) are standard facts of pluripotential theory, see \cite{BT82}.

Using Choquet's lemma and pluripolarity of unions of countably many pluripolar sets, the function $\phi=\sup\,\{v\in\cB^*(h,D)\}$ belongs to the class $\cB^*(h,D)$; in particular, $\phi\in\PSH(D)$ and $\phi\ge P(h)$. Since the set
$$ E=\{z:\: \phi(z)>P(h)(z)\}\cup \{z:\: P(h)(z)>h(z)\} $$
is pluripolar, there exists $v\in\PSH(D)$, not identically $-\infty$ and such that $v=-\infty$ on $E$. By \cite[Thm. 5.8]{Ce04}, one can assume $v<0$. Then, for any $\epsilon>0$, we have
$\phi+\epsilon\,v\le h$ everywhere in $D$, so $\phi+\epsilon\,v\le P(h)$. By taking $\epsilon\to 0$, we get $\phi\le P(h)$ quasi everywhere and thus everywhere in $D$, which proves (iii).

Finally, if $h_j$ decrease to $h$, then $P(h_j)$ decrease to a psh function $u\ge P(h)$. By (ii), $u\le h_j$ quasi everywhere. Since the union of countably many pluripolar sets is pluripolar, we get $u\le h$ outside a pluripolar set $E$ and thus, by (iii), $u\le P(h)$, which gives us (iv).
\end{proof}

\begin{remark}{\rm It was shown in \cite{GLZ} that, if {\sl bounded } $h_j$ increase to $h$, then $P(h_j)$ increase to $P(h)$ quasi everywhere. It is easy to see that this is no longer true if $h\le 0 $ is {\sl unbounded}, even in simple rooftop situations, see Example~\ref{exincr}.}
\end{remark}

Given $u,v\in \PSH(D)$, denote
$$ P(u,v)=P(\min\{u,v\}),$$
{\it the rooftop envelope} of $u$ and $v$ (we will use this notation also in the case of arbitrary bounded above functions $u$ and $v$).
For such a case, Proposition~\ref{Ph} adjusts as follows.

\begin{proposition}\label{Puv} Let $u,v, v_1, v_2, \ldots\in\PSH(D)$. Then
\begin{enumerate}
\item[(i)] $P(u,v)\in\PSH(D)$; if  $u,v\in\PSH^-(D)$, then $P(u,v)\ge u+v$;
\item[(ii)]  $P(u,v)\le \min\{u,v\}$ everywhere on $D$ (in other words, it is the largest psh minorant of $\min\{u,v\}$);
\item[(iii)] if $v_j$ decrease to $u$, then $P(u,v_j)$ decrease to $P(u,v)$.
\end{enumerate}
\end{proposition}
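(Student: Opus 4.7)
The plan is to derive all three parts as direct specializations of Proposition~\ref{Ph} to the function $h=\min\{u,v\}$, exploiting the fact that the pointwise minimum of two upper semicontinuous, locally bounded above functions is again upper semicontinuous and locally bounded above. Under this identification, part (i) reduces immediately to Proposition~\ref{Ph}(i), and (iii) to Proposition~\ref{Ph}(iv): if $v_j\downarrow v$ (which I read as the intended statement, the ``$u$'' in the excerpt being an apparent typo), then $\min\{u,v_j\}\downarrow\min\{u,v\}$, whence $P(u,v_j)=P(\min\{u,v_j\})\downarrow P(\min\{u,v\})=P(u,v)$.

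For the lower bound $P(u,v)\ge u+v$ in (i) under the assumption $u,v\in\PSH^-(D)$, I would simply observe that $u+v\in\PSH(D)$ and, since both $u$ and $v$ are nonpositive, satisfies $u+v\le u$ and $u+v\le v$, hence $u+v\le\min\{u,v\}$ pointwise. Thus $u+v\in\cB(\min\{u,v\},D)$, and so $u+v\le P(u,v)$.

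The only point requiring a small additional argument is (ii), where one needs to upgrade the quasi-everywhere inequality of Proposition~\ref{Ph}(ii) to a pointwise inequality everywhere. The key observation is that $\min\{u,v\}$ is itself upper semicontinuous: every competitor $w\in\cB(\min\{u,v\},D)$ satisfies $w\le\min\{u,v\}$ pointwise, so the non-regularized envelope $\hat h=\sup\cB(\min\{u,v\},D)$ satisfies $\hat h\le\min\{u,v\}$ everywhere, and passing to the upper semicontinuous regularization yields
\[
P(u,v)(z_0)=\limsup_{z\to z_0}\hat h(z)\le\limsup_{z\to z_0}\min\{u,v\}(z)\le\min\{u,v\}(z_0),
\]
the last inequality being the upper semicontinuity of $\min\{u,v\}$. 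I do not anticipate any real obstacle; the proof is essentially mechanical once one notices that Proposition~\ref{Ph} was set up precisely to accommodate u.s.c.\ data such as $\min\{u,v\}$, so no use of pluripolar exceptional sets or of the auxiliary psh function trick from the proof of Proposition~\ref{Ph}(iii) is actually needed here.
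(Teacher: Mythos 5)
Your proof is correct and follows exactly the route the paper intends: the paper gives no separate argument for Proposition~\ref{Puv}, presenting it as the specialization of Proposition~\ref{Ph} to $h=\min\{u,v\}$, which is precisely what you carry out (including the correct reading of the typo in (iii) and the observation that upper semicontinuity of $\min\{u,v\}$ upgrades the quasi-everywhere bound of Proposition~\ref{Ph}(ii) to the pointwise bound in (ii)).
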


\begin{remark}\label{rem:max} {\rm 1. Note that the inequality $P(u,v)\ge u+v$ makes always sense for psh functions, contrary to the case of quasi-psh functions where the sum need not be quasi-psh.

2. By (ii), maximality of $u$ and $v$ on $D'\subset D$ implies maximality of $P(u,v)$.

3. Moreover, as follows from \cite[Prop. 3.3]{Da14a} (see also \cite[Lemma 3.7]{DNL18a}),
\beq\label{MAP} {\rm NP}(dd^c[P(u,v)])^n\le \Bone_{\{P(u,v)=u\}} {\rm NP}(dd^c u)^n + \Bone_{\{P(u,v)=v\}} {\rm NP}(dd^c v)^n, \eeq
where ${\rm NP}(dd^c w)^n$ is the {\it non-pluripolar Monge-Amp\`ere operator} in the sense of \cite{BT87}: for Borel sets $E$,
$${\rm NP}(dd^c w)^n=\lim_{j\to\infty}\Bone_{E\cap\{w>-j\}} (dd^c\max\{w,-j\})^n.$$
}
\end{remark}

The function $P(u,v)$ is a psh version of the notion of least common multiple. The extreme cases are $P(u,v)=\min\{u,v\}$ (and then either $v\le u$ or $u\le v$) and $P(u,v)= u+v$; in the latter situation, we will say that $u$ and $v$ are {\it relatively prime} in $D$.

\begin{example} {\rm The functions $u=\log|z|$ and $v=-1$ in the unit ball $\Bn$ are relatively prime. Indeed, let $w\le P(u,v)$, then $w_1:=w+1\in\PSH^-(\Bn)$ satisfies $w_1\le u+ 1$. Therefore, it is dominated by the pluricomplex Green function for $\Bn$ with pole at $0$, that is, by $u$. This gives us $w\le u+v$.
}
\end{example}

Another example of relatively prime functions are $\log|z_1|$ and $\log|z_2|$ in the bidisk. More generally, in the analytic case, we have the following

\begin{proposition}\label{analenv} If $f_j=fh_j\in\cO(D)$, $j=1, 2$, and $\codim\{z:\: h_1(z)=h_2(z)=0\}>1$, then
$P(\log|f_1|,\log|f_2|)= \log|fh_1h_2|+ v$, where $v\ge0$ is a maximal psh function in $D$.
\end{proposition}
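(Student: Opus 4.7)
The plan is to set $u=P(\log|f_1|,\log|f_2|)$ and $w=\log|fh_1h_2|$ and to show that $v:=u-w$, defined a priori on $D\setminus\{fh_1h_2=0\}$, extends to a maximal psh function on $D$ with $v\ge 0$. The argument splits into a lower-bound step, an upper-bound/extension step, and a maximality step.

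For the lower bound, I would write $w=\log|f_j|+\log|h_{3-j}|$ for $j=1,2$ and, after a harmless normalization making $|h_j|\le 1$ on $D$ (absorbing the resulting additive constants into $v$), conclude $w\le\min\{\log|f_1|,\log|f_2|\}$; since $w$ is psh, Proposition~\ref{Puv}(ii) gives $w\le u$, i.e.\ $v\ge 0$. For the upper bound, the inequalities $u\le\log|f_j|$ yield $v\le-\log|h_{3-j}|$, hence $v\le-\max\{\log|h_1|,\log|h_2|\}$. Setting $S=\{h_1=h_2=0\}$, which has $\codim S\ge 2$ by hypothesis, near every point of $D\setminus S$ at least one $|h_j|$ is bounded away from zero, so $v$ is locally bounded above on $D\setminus S$. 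On $U:=D\setminus\{fh_1h_2=0\}$ the function $w$ is locally the real part of a holomorphic logarithm and hence pluriharmonic, so $v=u-w$ is psh on $U$. I would then extend $v$ in two stages: first across the closed pluripolar subset $\{fh_1h_2=0\}\setminus S$ of $D\setminus S$ by the Bedford--Taylor extension theorem (using the local boundedness above), and then across the analytic set $S$ of codimension $\ge 2$ by the Thullen-type removable-singularity theorem for psh functions.

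For maximality, on $U$ one has $dd^cv=dd^cu$ since $w$ is pluriharmonic there. Because $u$ is the psh envelope of $\min\{\log|f_1|,\log|f_2|\}$ and both $\log|f_j|$ are pluriharmonic on $U$, standard envelope theory yields $(dd^cu)^n=0$ on $U$: on the open set $\{u<\min\}$ this is the general maximality property of psh envelopes, while on the coincidence set $\{u=\log|f_j|\}$ the function $u$ is locally pluriharmonic. Hence $(dd^cv)^n=0$ on $U$; since $v$ is locally bounded on $D$ and $D\setminus U$ is pluripolar, the Bedford--Taylor theorem on non-charging of pluripolar sets upgrades this to $(dd^cv)^n=0$ on all of $D$.

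The main obstacle is the extension step: psh-ness of $v$ on $U$ rests on the local factorization $w=\Re F$ with $F$ holomorphic, and the final extension through $S$ is precisely where the hypothesis $\codim\{h_1=h_2=0\}>1$ is essential. One also has to verify that the two-stage extension preserves the bounds $0\le v\le-\max\{\log|h_1|,\log|h_2|\}$, whose combination ensures that the extended $v$ is locally bounded on all of $D$ and thus makes the maximality condition $(dd^cv)^n=0$ meaningful.
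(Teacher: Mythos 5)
Your proposal follows essentially the same route as the paper (the paper first factors out $\log|f|$ via $P(\log|f_1|,\log|f_2|)=\log|f|+P(\log|h_1|,\log|h_2|)$ and then studies $v=P(\log|h_1|,\log|h_2|)-\log|h_1h_2|$, which is just a cosmetic rearrangement of what you do), and your bounds $0\le v\le -\max\{\log|h_1|,\log|h_2|\}$ together with the two-stage extension across $\{fh_1h_2=0\}\setminus S$ and then across $S$ are exactly the right mechanism. However, there is one genuine gap in the maximality step. The claim that \emph{on the coincidence set $\{u=\log|f_j|\}$ the function $u$ is locally pluriharmonic} is false as stated: equality of $u$ with a pluriharmonic function on a closed (generally nowhere-open) set does not force $u$ to be pluriharmonic near those points, and in fact envelopes typically concentrate their Monge--Amp\`ere mass precisely on the contact set, so the argument as written does not rule out mass there. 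The correct way to get $(dd^cu)^n=0$ on $U$ is to invoke the inequality~(\ref{MAP}) of Remark~\ref{rem:max}.3: on $U$ both obstacles $\log|f_j|$ are pluriharmonic, so their non-pluripolar Monge--Amp\`ere measures vanish and hence so does that of $u$; since $u$ is locally bounded on $U$ this gives $(dd^cu)^n=0$ there. Alternatively one can use the elementary Perron-type argument behind Remark~\ref{rem:max}.2, which is what the paper does: maximality of the two pluriharmonic obstacles on $U$ is inherited by their rooftop envelope via Proposition~\ref{Puv}(ii). With this repair your proof goes through. As a minor remark, the ``harmless normalization making $|h_j|\le1$'' is not quite free, because the factorization $f_j=fh_j$ is given and rescaling $h_1,h_2$ separately destroys the common factor $f$; what one really gets without normalization is a lower bound $v\ge-\max_j\log\sup_D|h_j|$ rather than $v\ge0$. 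This only affects the additive normalization in the statement, not the psh-extension or maximality arguments, and the same issue is present in the paper's own proof, which tacitly uses Proposition~\ref{Puv}(i) with $\log|h_j|\in\PSH^-(D)$.
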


\begin{proof}  The function $v:= P(\log|h_1|,\log|h_2|)- \log|h_1h_2|$ is non-negative and psh on $D\setminus Z$. Since $\codim\, Z>1$, it extends to a non-negative psh function on $D$. By Remark~\ref{rem:max}.2, it is maximal on $D\setminus Z$ and, therefore, on $D$. Finally,
$$P(\log|f_1|,\log|f_2|)= \log|f|+P(\log|h_1|,\log|h_2|)= \log|f| + \log|h_1h_2|+ v,$$
which proves the claim.
\end{proof}

\medskip
Unlike the continuity under decreasing limit transitions given by Proposition~\ref{Puv}(iii), the behavior of $P(u,v_j)$ with increasing $v_j$ can be more complicated, provided $v_j$ are unbounded from below.

\begin{example}\label{exincr} {\rm Let $D=\Bn$, $u=0$, $v_j=\max_k\log|z_k|+j$. Then $\min\{u,v_j\}$ increase, as $j\to\infty$, to the function $\hat h$ equal to $0$ outside the origin and $\hat h(0)=-\infty$, while $P(u,v_j)$ increase to $\log|z|$ which has the same singularity as $\max_k\log|z_k|$.}
\end{example}

The following two simple technical observations are sometimes useful.

\begin{proposition}\label{Phu} If $h$ is a measurable, bounded from above function on $D$, then $P(P(h),u)=P(h,u)$ for any $u\in\PSH(D)$.
\end{proposition}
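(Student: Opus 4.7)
The plan is to invoke the characterization from Proposition~\ref{Ph}(iii), which identifies any envelope $P(k)$ as the largest psh function dominated by $k$ quasi-everywhere. Set $\phi_1 := P(P(h),u) = P(\min\{P(h),u\})$ and $\phi_2 := P(h,u) = P(\min\{h,u\})$. Both envelopes are well defined because $P(h) \in \PSH(D)$ by Proposition~\ref{Ph}(i), hence is locally bounded above, so $\min\{P(h),u\}$ is a measurable, locally bounded above function, just like $\min\{h,u\}$.

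For the inequality $\phi_1 \le \phi_2$, I would invoke Proposition~\ref{Ph}(ii) to get $P(h) \le h$ quasi-everywhere; hence $\min\{P(h),u\} \le \min\{h,u\}$ quasi-everywhere. Since $\phi_1 \le \min\{P(h),u\}$ quasi-everywhere (Proposition~\ref{Ph}(ii) again), concatenation yields $\phi_1 \le \min\{h,u\}$ quasi-everywhere. As $\phi_1 \in \PSH(D)$, Proposition~\ref{Ph}(iii) applied to the function $\min\{h,u\}$ identifies $\phi_2$ as the largest such minorant, giving $\phi_1 \le \phi_2$.

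For the reverse inequality $\phi_2 \le \phi_1$, I would proceed by two successive applications of Proposition~\ref{Ph}(iii). First, since $\phi_2 \in \PSH(D)$ and $\phi_2 \le h$ quasi-everywhere, part (iii) applied to $P(h)$ produces $\phi_2 \le P(h)$ everywhere in $D$. Combined with $\phi_2 \le u$ quasi-everywhere, this furnishes $\phi_2 \le \min\{P(h),u\}$ quasi-everywhere, and a second application of (iii) — this time to $\min\{P(h),u\}$ — gives $\phi_2 \le P(\min\{P(h),u\}) = \phi_1$.

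No serious obstacle is anticipated: the whole proof consists of routine bookkeeping of quasi-everywhere versus everywhere inequalities, a distinction that (iii) is tailored to handle. The only subtle point worth flagging is that the argument relies on (iii) being available for measurable, locally bounded above, not necessarily u.s.c., data — which is precisely the setting in which Proposition~\ref{Ph} was stated.
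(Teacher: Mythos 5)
Your proof is correct. The argument hinges entirely on two applications of Proposition~\ref{Ph}(iii), using it as a black box to pass between quasi-everywhere inequalities on the data and everywhere inequalities on the envelopes; this is clean and logically tight. The paper's own proof takes a slightly different route: the direction $P(h,u)\le P(P(h),u)$ is dispatched via the monotonicity consequence $P(h,u)\le P(h)$ (implicitly also a use of (iii)), while the harder direction $P(P(h),u)\le P(h,u)$ is handled by re-running the $\epsilon v$ perturbation that underlies the proof of (iii): one picks $v\in\PSH^-(D)$, $v\not\equiv-\infty$, with $v=-\infty$ on the pluripolar set $\{P(h)>h\}$, and writes the chain
\begin{equation*}
P(P(h),u)+\epsilon v \le P(P(h)+\epsilon v,u)\le P(h+\epsilon v,u)\le P(h,u),
\end{equation*}
then lets $\epsilon\to 0$. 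Your version is the more modular of the two, citing the lemma that was built for exactly this purpose rather than redoing its core trick inline, and it symmetrically treats both directions; the paper's version is self-contained but a bit more ad hoc. Both proofs rely on the same underlying machinery, namely that the pluripolar exceptional set of $P(h)\le h$ can be absorbed by adding $\epsilon v$ for a suitable psh $v$. One minor point to be explicit about: your use of $\phi_2\le u$ quasi-everywhere is actually an everywhere inequality, since $\phi_2$ and $u$ are both psh (and psh inequalities holding q.e. hold everywhere), which slightly simplifies the final step, but stating it as q.e. is of course also fine because (iii) only needs that.
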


\begin{proof} The inequality $P(h)\ge P(h,u)$ gives us $P(P(h),u)\ge P(h,u)$. To prove the reverse, we get, as in the proof of Proposition~\ref{Ph}, $v\in\PSH^-(D)$, $v\not\equiv -\infty$, equal to $-\infty$ on the set where $P(h)>h$. Then, for any $\epsilon>0$, we have
$$ P(P(h),u)+\epsilon v \le P(P(h)+\epsilon v,u)\le P(h+\epsilon v,u)\le P(h,u).$$
Letting $\epsilon\to 0$ we get $P(P(h),u)\le P(h,u)$ quasi everywhere and thus everywhere on $D$.
\end{proof}

\begin{proposition}\label{PuP} If $u\in\PSH^-(D)$ and $v\in\PSH(D)$, then
$P(u,v+\alpha)=P(u,P(0,v+\alpha))$
for any measurable function $\alpha$ on $D$.
\end{proposition}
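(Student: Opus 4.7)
The plan is to introduce the auxiliary function $w := P(0, v+\alpha)$ and prove the identity by establishing the two inequalities $P(u,w) \le P(u, v+\alpha)$ and $P(u, v+\alpha) \le P(u,w)$ separately. First I would check that both envelopes are well-defined: since $u \le 0$, the function $\min\{u, v+\alpha\}$ is bounded above by $0$, and similarly for $\min\{0, v+\alpha\}$, so Proposition~\ref{Ph} applies in both cases. Moreover, because $\sup\{v \in \cB(\min\{0,v+\alpha\},D)\} \le 0$ pointwise, the upper semicontinuous regularization $w$ is also nonpositive everywhere, and by Proposition~\ref{Ph}(ii) we have $w \le v+\alpha$ quasi everywhere.

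For the inequality $P(u,w) \le P(u, v+\alpha)$, I would observe that $P(u,w)$ is a psh function satisfying $P(u,w) \le u$ everywhere and $P(u,w) \le w \le v+\alpha$ quasi everywhere; hence $P(u,w) \le \min\{u, v+\alpha\}$ quasi everywhere. By Proposition~\ref{Ph}(iii) applied to $h = \min\{u, v+\alpha\}$, any psh function dominated q.e.\ by $h$ is dominated by $P(h) = P(u, v+\alpha)$, which gives the desired bound.

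For the reverse inequality $P(u, v+\alpha) \le P(u,w)$, I would use that $P(u, v+\alpha)$ is a psh function with $P(u, v+\alpha) \le u \le 0$ everywhere and $P(u, v+\alpha) \le v+\alpha$ quasi everywhere, hence $P(u, v+\alpha) \le \min\{0, v+\alpha\}$ q.e. Applying Proposition~\ref{Ph}(iii) once more yields $P(u, v+\alpha) \le P(0, v+\alpha) = w$, and combining this with $P(u, v+\alpha) \le u$ shows that $P(u, v+\alpha)$ is a psh minorant of $\min\{u, w\}$; by Proposition~\ref{Puv}(ii) applied to the pair $(u,w)$, or equivalently by definition of the rooftop, $P(u, v+\alpha) \le P(u,w)$.

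The argument is essentially bookkeeping; the only subtle point is making sure each comparison with the non-psh function $v+\alpha$ is interpreted in the q.e.\ sense and then upgraded to an everywhere inequality via Proposition~\ref{Ph}(iii). This is precisely the role played by the extension from $\cB(h,D)$ to $\cB^*(h,D)$ in the theory of psh envelopes, and it is where any trouble would arise if $\alpha$ were allowed to be worse than measurable.
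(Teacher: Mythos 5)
Your argument is correct and is essentially the paper's proof spelled out in full: both directions reduce to comparing minorants quasi everywhere and upgrading via Proposition~\ref{Ph}(iii), with the inequality $u\le 0$ doing the real work in the $\le$ direction. The only cosmetic difference is that the paper's one-line proof first replaces $v+\alpha$ by $P(v+\alpha)$ via Proposition~\ref{Phu} before comparing envelopes, whereas you work directly with $v+\alpha$; the underlying reasoning is the same.
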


\begin{proof}
Indeed, $P(u,v+\alpha)\ge P(u,P(0,v+\alpha))$ because $v+\alpha\ge P(0,v+\alpha)$, and $P(u,v+\alpha)\le P(u,P(0,v+\alpha))$ because
$P(u,v+\alpha)=P(u,P(v+\alpha))\le P(u,P(0,v+\alpha))$.
\end{proof}

\medskip
We illustrate the notion of rooftop envelopes by considering a specific class of functions.

\begin{example}\label{indic} {Rooftops of indicators.} {\rm Let  $\cG$ be the collection of convex subsets $\Gamma$ of the positive orthant $\Rnp$, satisfying $\Gamma+\Rnp\subset\Gamma$. The support function
$$\psi_\Gamma(t)=\sup\{\langle a,t\rangle:\: a\in \Gamma\}$$
of $\Gamma\in\cG$
is a negative convex function on  $\Rnm=-\Rnp$, increasing in each component $t_j$, and positively homogeneous: $\psi_\Gamma(ct)=c\,\psi_\Gamma(t)$ for any $c>0$. Then its psh image $$\Psi_\Gamma(z):=\psi_\Gamma(\log|z_1|,\ldots,\log|z_n|)$$
extends to a negative psh function in the unit polydisk $\D^n$, an {\it indicator}. The least indicator dominating a function $u\in\PSH^-(\D^n)$ is $$\Psi_u(z)={\lim_{m\to\infty}}^*\, \frac1m \,u(z_1^m,\ldots,z_n^m),$$ the {\it indicator of} $u$, used in Kushnirenko-Bernshtein type bounds for the residual Monge-Amp\`ere mass at $0$ \cite{LeR}, \cite{R00}, \cite{R13b}. Note that its value at $z$ with $z_k=e^{-a_k}$, $a_k>0$, is the negative directional Lelong number of $u$ in the direction $(a_1,\ldots,a_n)$.

It is easy to see that
$ P(\Psi_{\Gamma_1}, \Psi_{\Gamma_2})=\Psi_{\Gamma_1\cap\Gamma_2}$.
This gives us a $(\min,+)$-tropical semiring of the indicators with operations of rooftop envelopes and addition, isomorphic to that of the sets in $\cG$ with operations of intersection and Minkowski's addition.
Note that one gets a $(\max,+)$-tropical semiring on $\cG$ considered in \cite{R09} by replacing the intersection with taking convex hull of the union.}
\end{example}

\section{Green-Poisson residual functions}\label{sec:GP}
 Given $\phi\in\PSH^-(D)$, let $L(\phi)$ denote its {\it unbounded locus}, i.e., the set of points $a\in D$ such that $u\not\in L_{loc}^\infty(a)$, and $BL(\phi)$ be its {\it unbounded boundary locus}, the set of points $b\in\partial D$ such that $u\not\in L^\infty(\omega\cap D)$ for any neighbourhood $\omega$ of $b$. Note that $L(\phi)\subset D$ is relatively closed and $BL(\phi)\subset \partial D$ is closed. We will often work with the functions that have {\it small unbounded locus} in the sense that both $L(\phi)$ and $BL(\phi)$ are pluripolar sets; the class of such functions will be denoted by $\PSH_s^-(D)$.

 From now on, we assume the domain $D$ to be B-regular, meaning that for any continuous function $\eta$ on $\partial D$ there exists a psh function in $D$, continuous on $\overline D$ and equal to $\eta$ on $\partial D$.
 By \cite{Si}, B-regularity is equivalent to saying that $D$ has a strong psh barrier at any its boundary point $p$ (i.e., there exists a function $\rho_p\in\PSH(D)$ such that $\rho_p(x)\to 0$ as $x\to p$ and $\sup_{D\setminus U}\rho_p<0$ for any neighbourhood $U$ of $p$). We need this property to guarantee that the functions $g_\phi$, constructed below, have zero boundary values, possibly apart from $BL(\phi)$. Actually, if $L(\phi)\Subset D$, B-regularity can be replaced by a weaker condition of {\it hyperconvexity}, i.e., that there exists a negative psh exhaustion function on $D$.

Let $\phi,\psi\in\PSH^-(D)$. We say that $\phi$ has {\it  stronger singularity} than $\psi$ in $D$ and denote $\phi \preceq_D \psi$ if $\phi(z)\le \psi(z) + C$ for some $C\in\R$ and all $z\in D$. We also say that the functions have equivalent singularities, $\phi \sim_D \psi$, if $\phi \preceq_D \psi$ and $\psi \preceq_D \phi$. When the domain $D$ is fixed, we will use just the symbols $\preceq$ and $\sim$.

 Given $\phi\in\PSH^-(D)$, let $\cS_\phi=\cS_{\phi,D}$ denote the class of functions with singularities at least as strong as that of $\phi$:
$$\cS_\phi=\{w\in\PSH^-(D): w\preceq \phi \}.$$
The function $$g_\phi(z)=g_{\phi,D}(z)={\sup}^*\{w(z):\: w\in\cS_{\phi,D}\}$$
will be called the {\it Green-Poisson residual function} for the singularity $\phi$. The term  reflects the fact that such a function is determined by the singularities of $\phi$ both inside the domain and near its boundary, see a discussion below and especially Example~\ref{Poisson}.
Evidently, $g_\phi$ equals the regularized limit of the rooftop envelopes $P(\phi+C,0)$ as $C\to \infty$:
\beq\label{rtlim} g_\phi(z)={\sup_{C}}^*P(\phi+C,0)= {\lim}^*_{C\to\infty}\,P(\phi+C,0).
\eeq

\begin{remark}\label{rem:parreau} {\rm A very close notion was introduced and studied in classical potential theory, starting with \cite{Pa} where the bounded approximations of positive harmonic functions were considered. To stick with our objects, let us assume $D$ to be a domain in $\C$. If $u$ is the Poisson integral $P[\nu]$ of a negative measure $\nu$ on $\partial D$ (or, more generally, the Martin integral of a negative measure on the Martin boundary of $D$), then $g_u=P[\nu_s]$ for the singular part of $\nu$ with respect to the harmonic measure. In \cite{AL}, it was shown that if $u$ is the classical subharmonic Green potential of a positive measure $\mu$ in a domain of $\C$, then $g_u$ is the potential $G_{\mu_s}$ of the restriction $\mu_s$ of $\mu$ to $\{u=-\infty\}$. Furthermore, any negative subharmonic function $u$ in $D$ represents as $u=u_s+u_t$ with $g_{u_s}=u_s$ (singular part) and $g_{u_t}=0$ (quasi-bounded, or tame, part).

For psh functions, the picture is more complicated. The corresponding notions of singular and tame, in the above sense, psh functions were considered in \cite{NW} and related to the problem of approximation of unbounded (from above) psh functions by the bounded ones.}
\end{remark}

The condition of uniform commensurability $u\sim v$ means that, in general, both the singularities inside the domain and at its boundary are taken into account.
In pluripotential theory, more standard and much better studied is considering extremal psh functions determined by singularities inside the domain. We start with the easiest case of functions with isolated singularities.

\begin{example}\label{pcgf} {\rm

1. When $\phi(z)\sim\log|z-a|$ with $a\in D$, the function $g_\phi$ is the classical pluricomplex Green function $G_{a}$ of $D$ with pole at $a$.

2. Similarly,  $\phi(z)\sim\sum_1^k m_j\log|z-a_j|$ generates a weighted multipole pluricomplex Green function.

3. More generally, replacing each $m_j\log|z-a_j|$ with a function $\phi_j\in\PSH^-(D)$ which has isolated singularity at $a_j$, is maximal on a punctured neighborhood of $a_j$ and bounded near $\partial D$, we get Zahariuta's Green function $G_{(\phi_j)}$ for the {\it maximal singularities} $\phi_1,\ldots,\phi_n$ \cite{Za0}, \cite{Za}.
This was extended to arbitrary (non-maximal) isolated singularities in \cite{R06} as the {\it greenification} of $(\phi_j)$.
}
\end{example}

Pluricomplex Green functions with purely boundary singularities will be considered later in Example~\ref{Poisson}, and with `fat' (non-discrete) singularities -- in Example~\ref{Gball}.

\medskip

Here are some elementary properties of the Green-Poisson residual functions.

\begin{proposition}\label{gphi} Let $\phi,\psi\in \PSH^-(D)$. Then
\begin{enumerate}
\item[(i)] $g_{c\,\phi}=c\,g_\phi$ for any $c>0$;
\item[(ii)] if $\phi\preccurlyeq\psi$, then $g_\phi\le g_\psi$;
\item[(iii)] $g_{\phi+\psi}\ge g_\phi+g_\psi$;
\item[(iv)] $g_{\max\{\phi,\psi\}}\ge \max\{g_\phi,g_\psi\}$;
\item[(v)] $g_{P(\phi,\psi)}\le P(g_\phi,g_\psi)$.
\end{enumerate}
\end{proposition}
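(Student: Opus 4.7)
The plan is to read each item directly off the defining class
$\cS_\phi=\{w\in\PSH^-(D): w\preccurlyeq\phi\}$, combined with
Proposition~\ref{Puv} for (v) and a standard quasi-everywhere-to-everywhere
argument for (iii). Four of the five claims are essentially formal
manipulations of $\cS_\phi$ under rescaling and inclusion, so I expect the
only mildly delicate point to be the passage from a pointwise inequality
valid outside a pluripolar set to one valid on all of $D$ in the proof
of~(iii).

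For (i), the map $w\mapsto w/c$ is a bijection between $\cS_{c\phi}$ and
$\cS_\phi$, since $w\le c\phi+C$ is equivalent to $w/c\le\phi+C/c$; this
scaling is preserved by the pointwise supremum and by u.s.c.\
regularization, giving $g_{c\phi}=c\,g_\phi$. For (ii), if
$\phi\le\psi+C$, any $w\in\cS_\phi$ satisfies $w\le\phi+C_w\le\psi+(C+C_w)$,
so $\cS_\phi\subseteq\cS_\psi$ and therefore $g_\phi\le g_\psi$.
Item~(iv) then drops out: since $\phi,\psi\le\max\{\phi,\psi\}$, both
$\phi\preccurlyeq\max\{\phi,\psi\}$ and $\psi\preccurlyeq\max\{\phi,\psi\}$,
so (ii) yields $g_\phi,g_\psi\le g_{\max\{\phi,\psi\}}$, hence the pointwise
maximum is bounded by the right side. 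For (v), Proposition~\ref{Puv}(ii)
gives $P(\phi,\psi)\le\phi$ and $P(\phi,\psi)\le\psi$ on all of $D$,
so $P(\phi,\psi)\preccurlyeq\phi$ and $P(\phi,\psi)\preccurlyeq\psi$;
applying (ii) twice, $g_{P(\phi,\psi)}\le\min\{g_\phi,g_\psi\}$, and
since $g_{P(\phi,\psi)}\in\PSH(D)$ it is dominated by the largest psh
minorant of that minimum, i.e.\ by $P(g_\phi,g_\psi)$.

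It remains to handle~(iii). If $u\in\cS_\phi$ and $v\in\cS_\psi$, then
$u+v$ is psh and negative, and $u+v\le(\phi+\psi)+(C_u+C_v)$, so
$u+v\in\cS_{\phi+\psi}$ and thus $u+v\le g_{\phi+\psi}$ everywhere on~$D$.
Fixing $z$ and taking pointwise suprema separately over $u$ and $v$
yields $\hat g_\phi(z)+\hat g_\psi(z)\le g_{\phi+\psi}(z)$ for all $z$,
where $\hat g_\bullet$ denotes the un-regularized supremum defining
$g_\bullet$. By Choquet's lemma $\hat g_\phi=g_\phi$ and
$\hat g_\psi=g_\psi$ quasi everywhere, so $g_\phi+g_\psi\le g_{\phi+\psi}$
outside a pluripolar set. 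Both sides being psh on $D$, the inequality
extends everywhere by the standard principle that an inequality between
psh functions holding outside a pluripolar set (a fortiori, almost
everywhere with respect to Lebesgue measure) holds throughout $D$, as
seen by comparing standard mollifications. This last pluripolar-to-global
step is the only point in the argument beyond bookkeeping, and it is a
routine application of the framework already recalled in
Proposition~\ref{Ph}.
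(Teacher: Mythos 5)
Your proof is correct; the paper states this proposition without proof as a list of elementary properties, so there is no argument to compare against, but what you wrote is the natural argument one would supply. Each of (i), (ii), (iv), (v) follows from simple manipulations of the defining class $\cS_\phi$, and your treatment of (iii) correctly handles the one subtle point — that passing from $u+v\le g_{\phi+\psi}$ for all $u\in\cS_\phi$, $v\in\cS_\psi$ to $g_\phi+g_\psi\le g_{\phi+\psi}$ only gives the inequality quasi everywhere a priori, and then extends to all of $D$ because both sides are psh and the ball averages of a psh function decrease to its value.
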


By (\ref{rtlim}) and (\ref{MAP}), $(dd^c P(\phi+C,0))^n=0$ on $\{\phi> -C\}$, which implies

\begin{proposition}\label{NPMAg}
If $\phi\in\PSH^-(D)$, then ${\rm NP}(dd^c g_\phi)^n=0.$ In particular, $g_\phi$ is maximal on $D\setminus L(\phi)$.
\end{proposition}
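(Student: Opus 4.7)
The plan is to use the representation (\ref{rtlim}) to realize $g_\phi$ as the regularized increasing limit of $P_C := P(\phi+C, 0)$ as $C \to \infty$, and then combine the rooftop inequality (\ref{MAP}) from Remark~\ref{rem:max}.3 with the Bedford-Taylor continuity of $(dd^c\cdot)^n$ under locally uniformly bounded monotone limits.

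First I will record the sandwich $\phi \le P_C \le 0$: the upper bound is Proposition~\ref{Puv}(ii), and the lower bound holds because $\phi$ itself is a psh minorant of $\min\{\phi+C, 0\}$ for $C \ge 0$. In particular $\phi \le g_\phi \le 0$, and on every open sublevel set $U_M := \{\phi > -M\}$ the functions $P_C$ with $C \ge M$ (and also $g_\phi$) take values in $[-M, 0]$.

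Next I will apply (\ref{MAP}) with $u = \phi+C$ and $v = 0$. The $v$-term disappears, and on $\{\phi > -C\}$ we have $P_C \le 0 < \phi+C$, so the indicator $\Bone_{\{P_C = \phi+C\}}$ vanishes there; this gives ${\rm NP}(dd^c P_C)^n = 0$ on $\{\phi > -C\}$. Local boundedness of $P_C$ on $U_M$ for $C \ge M$ upgrades this to $(dd^c P_C)^n = 0$ in the Bedford-Taylor sense on $U_M$. Letting $C \to \infty$ on each $U_M$, the Bedford-Taylor monotone continuity of the Monge-Amp\`ere operator yields $(dd^c g_\phi)^n = 0$ on $U_M$, and hence on the union $\bigcup_M U_M = D \setminus \{\phi = -\infty\}$.

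Since $\{\phi = -\infty\}$ is pluripolar and ${\rm NP}(dd^c g_\phi)^n$ does not charge pluripolar sets, this yields ${\rm NP}(dd^c g_\phi)^n = 0$ throughout $D$. The maximality statement follows at once: every point of $D \setminus L(\phi)$ lies in some $U_M$, where $g_\phi$ is locally bounded with vanishing Bedford-Taylor measure, hence maximal. The main step that warrants care is the compatibility of the $^*$-regularization in (\ref{rtlim}) with the Bedford-Taylor monotone convergence, but the uniform bounds on each $U_M$ confine the discrepancy to a pluripolar set, which neither the Bedford-Taylor nor the non-pluripolar Monge-Amp\`ere operator sees.
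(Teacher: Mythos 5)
Your proof is correct and follows essentially the same route as the paper, which deduces from (\ref{rtlim}) and (\ref{MAP}) that $(dd^c P(\phi+C,0))^n=0$ on $\{\phi>-C\}$ and then passes to the limit. You have merely filled in the details (the sandwich $\phi\le P(\phi+C,0)\le 0$, the vanishing of the indicator $\Bone_{\{P_C=\phi+C\}}$ on $\{\phi>-C\}$, and the Bedford--Taylor monotone convergence on the sublevel sets $\{\phi>-M\}$) that the paper leaves implicit.
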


Evidently, $\phi\le g_\phi$ for any $\phi\in \PSH^-(D)$, however the singularities of the two functions can be different. In particular, while $L(g_\phi)\subset L(\phi)$ and $BL(g_\phi)\subset BL(\phi)$, one can have
$L(g_\phi)\neq L(\phi)$ and $BL(g_\phi)\neq BL(\phi)$, which results in certain difficulties in handling these functions. This is one of the reasons of restricting here to the class $\PSH_s$ of functions with small unbounded locus. Even a more challenging issue is the important {\it idempotency property}
$$g_{g_\phi}=g_\phi$$ which at the moment we can prove only for $\phi\in \PSH_s^-(D)$, see Theorem~\ref{gphi2} below, whose proof rests on the following (probably, known) version of the classical domination principle.

\begin{lemma}\label{CP} If $u,v\in\PSH(\omega)\cap L^\infty(\omega)$ on $\omega\Subset\Cn$ satisfy
$(dd^c v)^n \le (dd^c u)^n$ and
\beq\label{genBV} \limsup_{z\to\zeta}(u(z)-v(z))\le 0\quad \forall \zeta\in\partial \omega\setminus F
\eeq
for a pluripolar set $F\subset\partial \omega$, then $u\le v$ in $\omega$.
\end{lemma}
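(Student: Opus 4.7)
The plan is to reduce to the classical Bedford--Taylor comparison principle on a family of sub-domains exhausting $\omega$ up to a pluripolar set. Since $F \subset \partial\omega \subset \Cn$ is pluripolar, Josefson's theorem yields $\rho \in \PSH(\Cn)$ with $\rho \not\equiv -\infty$ and $\rho|_F \equiv -\infty$; after subtracting a constant we may arrange $\rho \le 0$ on a neighborhood of $\overline\omega$. The strategy is to perturb $u$ to $u_\epsilon := u + \epsilon\rho$, which forces $u_\epsilon(z) \to -\infty$ as $z \to F$, and to restrict the comparison to $\omega_\delta := \omega \cap \{\rho > -1/\delta\}$, where $u_\epsilon$ remains bounded.

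Set $M_0 := \|u\|_{L^\infty(\omega)} + \|v\|_{L^\infty(\omega)}$ and couple the parameters by $\epsilon = 3 M_0 \delta$. A point $\zeta \in \partial\omega_\delta$ lies either (i) on $\partial\omega$, where $\zeta \in \overline{\omega_\delta}$ together with upper semicontinuity of $\rho$ forces $\rho(\zeta) \ge -1/\delta > -\infty$, hence $\zeta \notin F$; the hypothesis (\ref{genBV}) combined with $\epsilon\rho \le 0$ then gives
\[
\limsup_{z \to \zeta,\, z \in \omega_\delta}\bigl(u_\epsilon(z) - v(z)\bigr) \le 0;
\]
or (ii) in $\omega \cap \{\rho = -1/\delta\}$, where upper semicontinuity of $\rho$ gives $\limsup_{z \to \zeta} \epsilon\rho(z) \le -\epsilon/\delta = -3M_0$, so the bound $|u - v| \le 2M_0$ yields
\[
\limsup_{z \to \zeta,\, z \in \omega_\delta}\bigl(u_\epsilon(z) - v(z)\bigr) \le -M_0 < 0.
\]

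On $\omega_\delta$, $u_\epsilon$ is bounded psh, and the Newton binomial expansion
\[
(dd^c u_\epsilon)^n \;=\; \sum_{k=0}^n \binom{n}{k}\, \epsilon^{n-k}\,(dd^c u)^k \wedge (dd^c\rho)^{n-k}
\]
is a sum of non-negative Bedford--Taylor currents, so $(dd^c u_\epsilon)^n \ge (dd^c u)^n \ge (dd^c v)^n$ on $\omega_\delta$. The classical comparison principle for bounded psh functions now yields $u_\epsilon \le v$ on $\omega_\delta$, that is, $u(z) \le v(z) - \epsilon\rho(z)$ there. For any fixed $z \in \omega$ with $\rho(z) > -\infty$, one has $z \in \omega_\delta$ for all sufficiently small $\delta$; letting $\delta \to 0$ (so $\epsilon \to 0$ while $\rho(z)$ remains finite) gives $u(z) \le v(z)$. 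Thus $u \le v$ on $\omega \setminus \{\rho = -\infty\}$, a set with pluripolar complement in $\omega$, and the inequality propagates to all of $\omega$ via the sub-mean value property of $u$ combined with upper semicontinuity of $v$.

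The principal delicacy is the coupling of the two parameters: $\epsilon/\delta$ must be large enough for the drop $\epsilon\rho = -\epsilon/\delta$ to overcome the $L^\infty$ oscillation of $u - v$ on the interior sub-boundary $\{\rho = -1/\delta\}$, where no comparison between $u$ and $v$ is available a priori, while $\epsilon$ itself must tend to $0$ so that the perturbation washes out in the pointwise limit. The linear coupling $\epsilon = 3 M_0 \delta$ realizes both requirements simultaneously.
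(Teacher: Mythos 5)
Your guiding idea --- perturb $u$ by $\epsilon\rho$ with $\rho$ plurisubharmonic, $\rho\equiv-\infty$ on $F$, so that the exceptional boundary set is absorbed --- is exactly the one used in the paper. But the execution through the sub-domains $\omega_\delta=\omega\cap\{\rho>-1/\delta\}$ has a genuine gap: a plurisubharmonic function is only upper semicontinuous, so the superlevel set $\{\rho>-1/\delta\}$ is in general \emph{not open} (for a typical Josefson function, $\rho=-\infty$ on a dense pluripolar set and $\{\rho>c\}$ has empty interior). Consequently the classical Bedford--Taylor comparison principle cannot be applied on $\omega_\delta$, and your boundary dichotomy also breaks down: a point $\zeta\in\partial\omega_\delta\cap\omega$ need not satisfy $\rho(\zeta)=-1/\delta$; it may have $\rho(\zeta)>-1/\delta$ while lying in the closure of $\{\rho\le-1/\delta\}$, and at such a point upper semicontinuity gives no upper bound better than $\epsilon\rho(\zeta)$, so the drop by $-3M_0$ that your coupling $\epsilon=3M_0\delta$ is designed to produce is not available. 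Passing to the interior of $\omega_\delta$ does not help, since its boundary inside $\omega$ again contains points where $\rho>-1/\delta$. Nor can you take $\rho$ continuous: a continuous psh function equal to $-\infty$ on a possibly dense pluripolar subset of $\partial\omega$ would force $\rho=-\infty$ on all of $\partial\omega$, which is not pluripolar.

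The way the paper circumvents this is to avoid carving out sub-domains altogether: it works on all of $\omega$ with $u_\delta=u+\delta(\Psi-1)$, where $\Psi\le 0$, $\Psi=-\infty$ on $F$, and observes that the boundary hypothesis plus the blow-down of $\Psi$ at $F$ make the set $\{v<u_\delta\}$ \emph{relatively compact} in $\omega$ (a statement about compact containment, not openness). It then invokes Cegrell's comparison lemma \cite[Lem.~4.4]{Ce98}, which is tailored to exactly this situation, choosing $\Psi\in\F_1$ so that $(dd^c u_\delta)^n$ is well defined and does not charge pluripolar sets when $\delta\to0$; finally the integral comparison inequality is converted into the pointwise domination by the standard $|z|^2$ perturbation. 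If you want to keep your own architecture, the missing ingredient is precisely a comparison statement valid when the perturbed competitor is unbounded but the set $\{u_\epsilon>v\}$ is compactly contained in $\omega$; without it, the reduction to bounded functions on open subsets cannot be completed. The remaining steps of your write-up (the binomial expansion of $(dd^c u_\epsilon)^n$ where $\rho$ is bounded, and the final propagation of $u\le v$ from the complement of $\{\rho=-\infty\}$ to all of $\omega$ via mean values) are fine.
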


\begin{proof} As in the classical case $F=\emptyset$ \cite{BT82}, the claim will follow from the corresponding comparison theorem: if (\ref{genBV}) is fulfilled, then
\beq\label{genDP}
\int_{\{v<u\}}(dd^cu)^n \le \int_{\{v<u\}}(dd^cv)^n.\eeq
To prove (\ref{genDP}), we note that the condition (\ref{genBV}) was used in the proof of the classical comparison theorem \cite[Thm.~4.1]{BT82} only in a reduction to the case $\{v<u\}\Subset \omega$ by replacing $u$ with $u-\delta$, $\delta\searrow 0$. In our situation, $u$ is to be replaced with the functions $u_\delta= u+\delta(\Psi-1)$, where $\Psi\in\PSH^-(\Omega)$ for a neighbourhood $\Omega$ of $\overline \omega$,  $\Psi\not\equiv -\infty$ and $\Psi=-\infty$ on  $F$. By \cite{Ce04}, one can assume $\Psi$ to belong to the Cegrell class $\F_1(\Omega)$, which implies that the $(dd^c\Psi)^n$ is well defined and does not charge pluripolar sets.

Then $(dd^c v)^n \le (dd^c u_\delta)^n$ on $\omega$ and $u_\delta$ is bounded on the set $\{v<u_\delta\}\Subset \omega$ converging to $\{v<u\}\setminus \Psi^{-1}(-\infty)$ as $\delta\to 0$. By \cite[Lem.~4.4]{Ce98},
$$  \int_{\{v<u_\delta\}}(dd^cu_\delta)^n \le \int_{\{v<u_\delta\}}(dd^cv)^n,$$
and (\ref{genDP}) follows because
$$ \int_{\{v<u_\delta\}}(dd^cu)^n \le \int_{\{v<u_\delta\}}(dd^cu_\delta)^n$$
and $(dd^c u)^n(\Psi^{-1}(-\infty))=0$.

Now, take $\psi(z)=|z|^2-C<0$ on $\omega$. If $\{v<u\}\neq\emptyset$, then $S=\{v<u+\epsilon\psi\}\neq\emptyset$ for some $\epsilon>0$ as well and has positive Lebesgue measure, $M$. Since the functions $\tilde u=u+\epsilon\psi$ and $v$ still satisfy the conditions of the lemma, (\ref{genDP}) gives us
$$ \epsilon^n\,M + (dd^cu)^n(S)\le (dd^c\tilde u)^n(S)\le (dd^cv)^n(S),$$
which contradicts $(dd^c v)^n \le (dd^c u)^n$.
\end{proof}

\begin{theorem}\label{gphi2} Let $\phi\in \PSH_s^-(D)$. Then
\begin{enumerate}
\item[(i)]
$g_\phi$ is a maximal psh function outside its unbounded locus $L(g_\phi)$;
\item[(ii)] $g_\phi=0$ on $\partial D\setminus BL(g_\phi)$;
\item[(iii)] $g_{g_\phi}=g_\phi$;
\item[(iv)] if $g_{\phi}'$ denotes the Green-Poisson function $g_{\phi,D'}$ of $\phi$ with respect to a domain $D'\subset D$, then $g_{g_\phi}'=g_\phi'$. As a consequence, if $g_\phi=g_\psi$ for a function $\psi\in \PSH_s^-(D)$, then $g_\phi'=g_\psi'$
 \end{enumerate}
\end{theorem}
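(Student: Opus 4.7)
Plan. The four parts are tightly linked, and I would proceed in the stated order with Lemma~\ref{CP} as the principal tool.

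For (i), I combine Proposition~\ref{NPMAg} (which gives ${\rm NP}(dd^c g_\phi)^n = 0$) with the Bedford-Taylor theorem: on the open set $D \setminus L(g_\phi)$ the function $g_\phi$ is locally bounded, so $(dd^c g_\phi)^n$ charges no pluripolar set there; since its non-pluripolar part also vanishes, $(dd^c g_\phi)^n \equiv 0$ on $D \setminus L(g_\phi)$, giving maximality.

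For (ii), I would exploit the rooftop approximation $g_\phi = {\lim}^* P(\phi + C, 0)$ from (\ref{rtlim}). Fix $\zeta \in \partial D \setminus BL(g_\phi)$ and a ball $B = B(\zeta, r)$ on which $g_\phi$, hence every $P(\phi+C, 0) \le g_\phi$, is uniformly bounded. The estimate (\ref{MAP}) applied to the rooftop shows that each envelope $P(\phi+C, 0)$ has vanishing non-pluripolar Monge-Amp\`ere mass, hence is maximal on the bounded region $(B \cap D) \setminus L(\phi)$. B-regularity supplies a continuous strong psh barrier at $\zeta$; applying Lemma~\ref{CP} on the B-regular subdomain $B \cap D$, with the pluripolar set $BL(\phi) \cap \overline{B}$ absorbed into the exceptional boundary set $F$, and comparing against a small negative multiple of the barrier, forces $P(\phi+C, 0)(z) \to 0$ as $z \to \zeta$. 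Passing to the increasing $C \to \infty$ limit and u.s.c. regularization preserves the zero limit at every point of $\partial D \setminus BL(g_\phi)$.

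For (iii), $g_\phi \in \cS_{g_\phi}$ (with $C_w = 0$) gives $g_\phi \le g_{g_\phi}$ for free; the content is the reverse. By (\ref{rtlim}) it suffices to show $u_C := P(g_\phi + C, 0) \le g_\phi$ for every $C \ge 0$. Now $u_C \ge g_\phi$ since $g_\phi$ is a competitor, and by (\ref{MAP}) combined with ${\rm NP}(dd^c g_\phi)^n = 0$ one gets ${\rm NP}(dd^c u_C)^n = 0$, so $u_C$ is locally bounded and maximal on $D \setminus L(u_C) \supseteq D \setminus L(g_\phi)$ by the Bedford-Taylor argument of part (i). At any $\zeta \in \partial D \setminus BL(g_\phi)$ the sandwich $g_\phi \le u_C \le 0$ together with part (ii) applied to $g_\phi$ yields $u_C \to 0$ at $\zeta$. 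Thus $u_C$ and $g_\phi$ are both psh, $\le 0$, maximal off pluripolar loci in $D$, and vanishing at $\partial D$ off pluripolar subsets. Truncating $u_C^N = \max\{u_C, -N\}$ and $g_\phi^N = \max\{g_\phi, -N\}$, swallowing $L(g_\phi) \cup BL(g_\phi)$ in the $-\infty$-set of a Cegrell $\F_1$-function (as in the proof of Lemma~\ref{CP}), and applying that lemma on B-regular exhausting subdomains then gives $u_C \le g_\phi$ everywhere, completing (iii). Part (iv) is the same argument carried out inside the subdomain $D'$; the final consequence follows by applying (iv) twice: $g_\phi' = g_{g_\phi}' = g_{g_\psi}' = g_\psi'$.

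The principal obstacle is part (ii). The set $\partial D \setminus BL(g_\phi)$ may strictly contain $\partial D \setminus BL(\phi)$, so at a point $\zeta \in BL(\phi) \setminus BL(g_\phi)$ individual test functions $w \in \cS_\phi$ are allowed arbitrarily bad boundary behavior, and the vanishing of $g_\phi$ at $\zeta$ is a genuinely enveloping phenomenon. Handling it requires simultaneously exploiting B-regularity of $D$ and the strengthened domination principle Lemma~\ref{CP} with its pluripolar exceptional boundary set; everything else in parts (i), (iii), (iv) then cascades from this boundary analysis.
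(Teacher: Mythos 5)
Your part (i) is correct and matches the intended argument, and the formal deductions in (iv) are fine once (iii) is available. The problems are in (ii) and (iii).

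In (ii) you assert that each approximant $P(\phi+C,0)$ is uniformly bounded on $B\cap D$ because it is $\le g_\phi$, and that Lemma~\ref{CP} then forces $P(\phi+C,0)(z)\to 0$ as $z\to\zeta$. The inequality $P(\phi+C,0)\le g_\phi$ is only an upper bound: since $P(\phi+C,0)\le \phi+C$, each approximant is unbounded below near every point of $BL(\phi)\cap\overline B$, and the whole difficulty of (ii) is precisely at points $\zeta\in BL(\phi)\setminus BL(g_\phi)$. Concretely, for $\phi=\log|z-a|-C_0$ with $|a|=1$ in $\Bn$ one has $g_\phi=0$, yet $P(\phi+C,0)(z)\le \log|z-a|-C_0+C\to-\infty$ as $z\to a$; no individual approximant vanishes at $a$, only the regularized supremum does. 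Since Lemma~\ref{CP} requires $L^\infty$ functions, it cannot be applied to the approximants at all. The repair is to apply it to the limit $g_\phi$ itself, which \emph{is} bounded on $\omega=U\cap D$ for a neighbourhood $U$ of $\zeta$ chosen with $U\cap\overline{L(g_\phi)}=\emptyset$: take $v=g_\phi$ (whose Monge--Amp\`ere measure vanishes on $\omega$ by (i)), $u$ the solution of the homogeneous Dirichlet problem in $\omega$ with data $g_\phi$ on $\partial\omega\cap D$ and $0$ on $\partial\omega\cap\partial D$, and $F=BL(\phi)\cap\partial\omega$; then $u\le g_\phi\le 0$ and $u\to0$ at $\zeta$ by the barrier.

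In (iii) the final comparison never uses the hypothesis $u_C\le g_\phi+C$ beyond setting up the competitors, and this is fatal: the properties you do invoke --- $u_C\ge g_\phi$, maximality of both functions off the pluripolar set $L(g_\phi)$, and common zero boundary values off a pluripolar boundary set --- do \emph{not} imply $u_C\le g_\phi$. For instance $G_a$ and $2G_a$ satisfy all of them with $G_a\ge 2G_a$, yet $G_a\not\le 2G_a$; so any argument using only these inputs proves too much. Moreover, the invocation of Lemma~\ref{CP} for the truncations would require $(dd^c\max\{g_\phi,-N\})^n\le(dd^c\max\{u_C,-N\})^n$, which you neither establish nor have any reason to expect (truncation creates Monge--Amp\`ere mass on the level set, and there is no monotonicity of such masses in the local setting). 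The missing idea is quantitative: from $u_C\le g_\phi+C$ one gets $u_C\le(1-\epsilon)g_\phi$ on $\{g_\phi<-C/\epsilon\}$; adding $\epsilon\Psi$ for a psh $\Psi\not\equiv-\infty$ equal to $-\infty$ on $L(g_\phi)\cup BL(g_\phi)$ makes the set $\{u_C+\epsilon\Psi>(1-\epsilon)g_\phi\}$ relatively compact in $D\setminus L(g_\phi)$, where $(1-\epsilon)g_\phi$ is maximal by (i); hence $u_C+\epsilon\Psi\le(1-\epsilon)g_\phi$ there, and letting $\epsilon\to0$ gives $u_C\le g_\phi$ quasi-everywhere and therefore everywhere. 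With (iii) repaired this way, your treatment of (iv) goes through.
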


\begin{proof} Maximality of $g_\phi$ outside $L(\phi)$ is established in Proposition~\ref{NPMAg}. Alternatively, we can use a more elementary, standard approach by using Perron-type arguments. By the Choquet lemma, there exists a sequence $u_j\in \cS_{\phi}$ increasing quasi everywhere to $g_\phi$. Take any open set $D'\Subset D\setminus L(\phi)$ and $\tilde u_j\in \PSH^-(D)$ equal to $u_j$ on $D\setminus D'$ and satisfying $(dd^c\tilde u_j)^n=0$ in $D'$. Then $u_j\in \cS_\phi$ increase quasi everywhere to $g_\phi$ as well and so, the latter satisfies  $(dd^c g_\phi)^n=0$ in $D'$ and, therefore, in $ D\setminus L(\phi)$.
To extend the maximality to $ D\setminus L(g_\phi)$, we use the condition that $\phi$ (and thus $g_\phi$) has small unbounded locus.  Since $g_\phi$ is locally bounded on  $ D\setminus L(g_\phi)$, the Monge-Amp\`ere measure $(dd^c g_\phi)^n$ cannot charge the pluripolar set $L(\phi)\setminus L(g_\phi)$, which gives us the maximality of $g_\phi$ in $ D\setminus L(g_\phi)$ and proves (i).

Assertion (ii) will be also proved in two steps. First, the relation $g_\phi=0$ on $\partial D\setminus BL(\phi)$ follows by standard arguments using the maximality of $g_\phi$ (see, for example, \cite[Prop.2.4]{LS}); in this part, no condition of small unbounded locus is needed. Namely, let $p\in\partial D\setminus BL(\phi)$, then $\phi> -K$ near $D\cap {\overline U}$ for some $K>0$ and a neighbourhood $U$ of $p$. Let $\rho_p$ be a strong psh barrier for $D$ at $p$, then $\sup_{D\setminus U}\rho_p<-K/c$ for some $c>0$ and so,
$\phi>c\,\rho_p$ near $\partial U\cap D$. Then the function $u$ equal to $\max\{\phi, c\rho_p\}$ on $D\cap U$ and to $\phi$ on $D\setminus U$ is psh in $D$ and belongs to $\cS_{\phi}$, while $u(x)\to 0$ as $x\to p$.

To extend this to $\partial D\setminus BL(g_\phi)$,  we choose a neighbourhood $U$ of a point $\zeta\in BL(\phi)\setminus BL(g_\phi)$ such that $U\cap\overline{L(g_\phi)}=\emptyset$ (which is possible because $g_\phi$ is bounded near $\zeta$), and apply Lemma~\ref{CP} in $\omega=U\cap D$ to $v=g_\phi$ and $u$ the solution to the Dirichlet problem for the homogeneous Monge-Amp\`ere equation with boundary value $g_\phi$ on $\partial \omega\cap D$ and $0$ on $\partial \omega\cap \partial D$.

To prove (iii), take any $u\in\cS_{g_\phi}$, then $u\le g_\phi +C$ for some $C>0$; clearly, we can assume $u\ge g_\phi$. For any $\epsilon>0$, let $N>C/\epsilon$, then $u\le (1-\epsilon)g_\phi$  in  $D_N=\{z\in D:\: g_\phi(z)<-N\}$. Take a function $\Psi\in\PSH^-(\Omega)$, $D\Subset\Omega$, $\Psi\not\equiv -\infty$, equal to $-\infty$ on $L(g_\phi)\cup BL(g_\phi)$. We the have $$\{z\in D:\: u(z)+\epsilon \Psi(z)>(1-\epsilon)g_\phi(z)\}\Subset D\setminus L(g_\phi)$$
(we add $\epsilon\Psi$ to take care of the approaching $L(g_\phi)\cup BL(g_\phi)$ from the outside of $D_N$).
Since the function $(1-\epsilon)g_\phi$ is maximal outside $L(g_\phi)$, this implies $u+\epsilon \Psi\le (1-\epsilon)g_\phi+\epsilon$ in $D\setminus L(g_\phi)$ and so,
$u= g_\phi$ in $D$.

 Finally, since $\phi\le g_\phi\le g_\phi'$ in $D'$, we have, by (iii),
 $$ g_\phi'\le g_{g_\phi}'\le g_{g_\phi'}'= g_\phi',$$
 which proves (iv).
\end{proof}

\medskip
In the next three propositions, we indicate interactions of the Green-Poisson residual functions with sums, maxima and rooftop envelopes of psh functions.

\begin{proposition}\label{gphisum} Let $\phi,\psi\in \PSH_s^-(D)$. Then
\begin{enumerate}
\item[(i)] $g_{\phi+\psi}=g_{g_\phi+g_\psi}$;
\item[(ii)]
$g_{\phi+\psi}=g_\phi$ iff $g_\psi=0$.
 \end{enumerate}
\end{proposition}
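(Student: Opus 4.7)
The plan is to derive both parts directly from the three tools already in hand: monotonicity and subadditivity of $\phi\mapsto g_\phi$ recorded in Proposition~\ref{gphi}(ii)--(iii), together with the idempotency $g_{g_\phi}=g_\phi$ proved in Theorem~\ref{gphi2}(iii). A preliminary observation is that $\PSH_s^-(D)$ is stable under the operations we use: $L(\phi+\psi)\subset L(\phi)\cup L(\psi)$ and $BL(\phi+\psi)\subset BL(\phi)\cup BL(\psi)$ are pluripolar, and the same is true for $g_\phi+g_\psi$, so idempotency is available for all the functions that will appear.

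For (i), the plan is to prove the two inequalities by the same template. From the pointwise bounds $\phi\le g_\phi$ and $\psi\le g_\psi$ we get $\phi+\psi\le g_\phi+g_\psi$, which, with constant zero, says $\phi+\psi\preceq g_\phi+g_\psi$; monotonicity then yields $g_{\phi+\psi}\le g_{g_\phi+g_\psi}$. Conversely, Proposition~\ref{gphi}(iii) gives $g_\phi+g_\psi\le g_{\phi+\psi}$, i.e. $g_\phi+g_\psi\preceq g_{\phi+\psi}$, again with constant zero; monotonicity combined with idempotency then yields $g_{g_\phi+g_\psi}\le g_{g_{\phi+\psi}}=g_{\phi+\psi}$.

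For (ii), the ``if'' direction is a one-line consequence of (i): if $g_\psi=0$, then $g_{\phi+\psi}=g_{g_\phi+g_\psi}=g_{g_\phi}=g_\phi$ by idempotency. For the ``only if'' direction, assume $g_{\phi+\psi}=g_\phi$. By Proposition~\ref{gphi}(iii) we have $g_\phi=g_{\phi+\psi}\ge g_\phi+g_\psi$, so $g_\psi\ge 0$. Since $g_\psi\in \PSH^-(D)$ it is automatically $\le 0$, hence $g_\psi=0$.

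No serious obstacle is anticipated; the whole argument is a bookkeeping of $\preceq$ with constant zero. The only point that needs care is remembering that monotonicity of $g$ and the inequality $g_{\phi+\psi}\ge g_\phi+g_\psi$ can be chained through the idempotency at the right moment, which is exactly what is done in the second inequality of~(i) and the ``only if'' of~(ii).
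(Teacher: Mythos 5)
Part (i) and the ``if'' half of (ii) are correct and match the paper's argument. The ``only if'' half of (ii), however, contains a real error. You write that $g_\phi = g_{\phi+\psi}\ge g_\phi+g_\psi$ ``so $g_\psi\ge 0$.'' But the inequality $g_\phi\ge g_\phi+g_\psi$, after cancelling $g_\phi$ at any point where it is finite, gives $0\ge g_\psi$ --- which is the trivially known negativity of $g_\psi$, not the nontrivial $g_\psi\ge 0$. The sign is simply backwards, and nothing in your chain of inequalities actually bounds $g_\psi$ from below.

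The missing input is a bound of the form $g_\phi\le g_\psi$, which comes from monotonicity applied to $\phi+\psi\le\psi$: $g_\phi=g_{\phi+\psi}\le g_\psi$. Once you have that, the paper's argument iterates: from $g_\phi\le g_\psi$ and $g_\phi=g_{g_\phi+g_\psi}$ (part (i) plus idempotency), monotonicity and the homogeneity $g_{c\phi}=c\,g_\phi$ give $g_\phi\le g_{2g_\psi}=2g_\psi$; repeating yields $g_\phi\le N g_\psi$ for every $N$. At any point where $g_\phi>-\infty$ (i.e.\ quasi everywhere), one then gets $g_\psi\ge g_\phi/N\to 0$, and since $g_\psi$ is a negative psh function attaining $0$ in the interior, $g_\psi\equiv 0$. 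Your approach simply lacks this iteration step; without it there is no way to extract a lower bound on $g_\psi$ from the assumption $g_{\phi+\psi}=g_\phi$.
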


\begin{proof} Relation (i) follows, by Theorem~\ref{gphi2}(iii), from
$ g_{\phi+\psi}\ge g_\phi+g_\psi\ge \phi+\psi$.
Next, let $g_\psi=0$, then, by (i) and Proposition~\ref{gphi}(ii),
$$ g_\phi\ge g_{\phi+\psi}\ge g_\phi+g_\psi = g_\phi,$$
which gives $g_{\phi+\psi}=g_\phi$.
Conversely, if $g_{\phi+\psi}=g_\phi$, then
$ g_\phi\le g_\psi$. Furthermore, since $\phi\le g_\phi$ and $\psi\le g_\psi$, we have
$$ g_\phi=g_{\phi+\psi}= g_{g_\phi+g_\psi}\le 2g_\psi.$$
By repeating this, we get $g_\phi\le Ng_\psi$ for each positive $N$ and thus, $g_\psi=0$.
\end{proof}

\begin{proposition}\label{gmax}
If $\phi, \psi\in\PSH_s^-(D)$, then
\begin{enumerate}
\item[(i)] $g_{\max\{\phi,\psi\}}=g_{\max\{g_\phi,g_\psi\}}$;
\item[(ii )]  $g_{\max\{\phi,\psi\}}=g_\phi$ iff $g_\psi\le g_\phi$.
 \end{enumerate}
\end{proposition}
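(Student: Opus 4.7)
The plan is to prove (i) by sandwiching $g_{\max\{\phi,\psi\}}$ between two applications of the monotonicity in Proposition~\ref{gphi}(ii), using the pointwise bound $\phi\le g_\phi$ in one direction and Proposition~\ref{gphi}(iv) combined with idempotency (Theorem~\ref{gphi2}(iii)) in the other, and then to derive (ii) as a short formal consequence of (i). A small preliminary observation I will use throughout is that if $\phi,\psi\in\PSH_s^-(D)$, then both $\max\{\phi,\psi\}$ and $\max\{g_\phi,g_\psi\}$ again lie in $\PSH_s^-(D)$, since $L(\max\{\phi,\psi\})=L(\phi)\cap L(\psi)$ and $BL(\max\{\phi,\psi\})=BL(\phi)\cap BL(\psi)$ are pluripolar, and similarly for $\max\{g_\phi,g_\psi\}$ using $L(g_\phi)\subset L(\phi)$ and $BL(g_\phi)\subset BL(\phi)$. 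This allows us to invoke Theorem~\ref{gphi2}(iii) for these functions.

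For the upper bound in (i), since $\phi\le g_\phi$ and $\psi\le g_\psi$, we have the pointwise inequality $\max\{\phi,\psi\}\le \max\{g_\phi,g_\psi\}$, which in particular gives $\max\{\phi,\psi\}\preceq \max\{g_\phi,g_\psi\}$. By the monotonicity of $g$ (Proposition~\ref{gphi}(ii)), this yields
\[
g_{\max\{\phi,\psi\}}\le g_{\max\{g_\phi,g_\psi\}}.
\]
For the reverse inequality, Proposition~\ref{gphi}(iv) gives $\max\{g_\phi,g_\psi\}\le g_{\max\{\phi,\psi\}}$ everywhere on $D$, and hence $\max\{g_\phi,g_\psi\}\preceq g_{\max\{\phi,\psi\}}$. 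Applying $g$ once more and then using the idempotency $g_{g_u}=g_u$ from Theorem~\ref{gphi2}(iii) (applicable since $\max\{\phi,\psi\}\in\PSH_s^-(D)$), we obtain
\[
g_{\max\{g_\phi,g_\psi\}}\le g_{g_{\max\{\phi,\psi\}}}= g_{\max\{\phi,\psi\}},
\]
which combines with the previous display to give (i).

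For (ii), suppose first that $g_\psi\le g_\phi$. Then $\max\{g_\phi,g_\psi\}=g_\phi$, so by (i) and idempotency,
\[
g_{\max\{\phi,\psi\}}=g_{\max\{g_\phi,g_\psi\}}=g_{g_\phi}=g_\phi.
\]
Conversely, if $g_{\max\{\phi,\psi\}}=g_\phi$, then Proposition~\ref{gphi}(iv) gives
\[
g_\psi\le \max\{g_\phi,g_\psi\}\le g_{\max\{\phi,\psi\}}=g_\phi,
\]
which completes the proof. No genuine obstacle arises here: everything is reduced to the three tools already at our disposal (pointwise monotonicity, the trivial inequality of Proposition~\ref{gphi}(iv), and idempotency), the only point requiring attention being the verification that $\max\{\phi,\psi\}$ and $\max\{g_\phi,g_\psi\}$ remain in $\PSH_s^-(D)$ so that idempotency is available.
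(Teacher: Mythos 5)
Your proof is correct and follows essentially the same route as the paper's: the two-sided estimate $g_{\max\{\phi,\psi\}}\ge\max\{g_\phi,g_\psi\}\ge\max\{\phi,\psi\}$ combined with monotonicity and the idempotency from Theorem~\ref{gphi2}(iii). The only cosmetic difference is in the converse of (ii), where you apply Proposition~\ref{gphi}(iv) directly instead of first reducing to the case $\phi=g_\phi$, $\psi=g_\psi$ as the paper does; both are valid, and your preliminary check that the maxima stay in $\PSH_s^-(D)$ (only the inclusions $L(\max\{\phi,\psi\})\subset L(\phi)$, etc., are actually needed) correctly justifies the use of idempotency.
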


\begin{proof} Relation (i) follows from
$$g_{\max\{\phi,\psi\}}\ge\max\{g_\phi,g_\psi\}\ge\max\{\phi,\psi\},$$
the first inequality here being by Proposition~\ref{gphi}(iv).
For (ii), assume first $g_\psi\le g_\phi$, then
$$g_\phi\le g_{\max\{\phi,\psi\}} = g_{\max\{g_\phi,g_\psi\}}= g_\phi.$$
To prove the reverse, we can assume, by (i), $\phi=g_\phi$ and $\psi=g_\psi$. Then we have
$$ \max\{\phi,\psi\}\le g_{\max\{\phi,\psi\}}=\phi,$$
and the proof is complete.
\end{proof}

\begin{proposition}\label{prG&P} Let $\phi,\psi\in \PSH_s^-(D)$. Then
\begin{enumerate}
\item[(i)] $ g_{P(g_\phi,g_\psi)}= P(g_\phi, g_\psi)$;
\item[(ii)] $P(g_\phi, g_\psi)$ is a maximal psh function outside its unboudedness locus;
\item[(iii)] If $g_\phi$ and $g_\psi$ are mutually prime, then $g_{\phi+\psi}=g_\phi+g_\psi$.
\end{enumerate}
\end{proposition}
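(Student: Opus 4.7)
The plan is to treat the three items in sequence, each building on the previous, all resting on the idempotency $g_{g_\phi}=g_\phi$ from Theorem~\ref{gphi2}(iii) together with the monotonicity $g_{P(\phi,\psi)}\le P(g_\phi,g_\psi)$ from Proposition~\ref{gphi}(v).

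For (i), I would observe that $g_{P(g_\phi,g_\psi)}\ge P(g_\phi,g_\psi)$ is immediate, since $P(g_\phi,g_\psi)$ itself belongs to $\cS_{P(g_\phi,g_\psi)}$ with constant $C=0$. The reverse inequality follows by applying Proposition~\ref{gphi}(v) to the pair $(g_\phi,g_\psi)$ and collapsing the right-hand side via idempotency:
\[
g_{P(g_\phi,g_\psi)}\le P(g_{g_\phi},g_{g_\psi})=P(g_\phi,g_\psi).
\]

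For (ii), the strategy is to apply Theorem~\ref{gphi2}(i) to $v:=P(g_\phi,g_\psi)$, which by part (i) coincides with its own Green-Poisson residual function $g_v$. This requires verifying $v\in\PSH_s^-(D)$. Since $\phi\in\cS_\phi$ forces $g_\phi\ge\phi$, the function $g_\phi$ is bounded below wherever $\phi$ is, so $L(g_\phi)\subset L(\phi)$, and similarly $BL(g_\phi)\subset BL(\phi)$; hence $g_\phi,g_\psi\in\PSH_s^-(D)$. The two-sided estimate $g_\phi+g_\psi\le v\le 0$ coming from Proposition~\ref{Puv}(i) then forces $L(v)\subset L(g_\phi)\cup L(g_\psi)$, and analogously on $\partial D$, so $v$ has small unbounded locus and Theorem~\ref{gphi2}(i) delivers the required maximality of $v=g_v$ off $L(v)$.

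For (iii), Proposition~\ref{gphisum}(i) reduces the claim to showing $g_{g_\phi+g_\psi}=g_\phi+g_\psi$. The lower bound is again trivial, and for the upper bound I would chain the inequality $g_\phi+g_\psi\le P(g_\phi,g_\psi)$ from Proposition~\ref{Puv}(i) with the monotonicity of Proposition~\ref{gphi}(ii) and item (i) above to obtain
\[
g_{g_\phi+g_\psi}\le g_{P(g_\phi,g_\psi)}=P(g_\phi,g_\psi)=g_\phi+g_\psi,
\]
the last equality being exactly the relatively prime hypothesis. I do not anticipate any serious obstacle: the proposition is essentially a formal consequence of the structural results of Section~\ref{sec:GP}, and the only substantive new verification is the small unbounded locus claim entering the proof of (ii).
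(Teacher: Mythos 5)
Your proof is correct and follows essentially the same route as the paper: item (i) via Proposition~\ref{gphi}(v) plus idempotency from Theorem~\ref{gphi2}(iii), item (ii) by combining (i) with Theorem~\ref{gphi2}(i), and item (iii) by reducing via Proposition~\ref{gphisum}(i) and the mutual primeness. The only difference is cosmetic: you explicitly verify that $P(g_\phi,g_\psi)\in\PSH_s^-(D)$ before invoking Theorem~\ref{gphi2}(i), a step the paper leaves implicit, and in (iii) you use one inequality plus the trivial lower bound where the paper writes a chain of equalities.
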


\begin{proof} By Proposition~\ref{gphi}(v), we get, since $g_\phi$ and $g_\psi$ are idempotent,
   $$ P(g_\phi,g_\psi)\le g_{P(g_\phi,g_\psi)}\le P(g_\phi,g_\psi)$$
     which implies (i) and, in view of Theorem~\ref{gphi2}(i), statement (ii).
  Finally, if $g_\phi$ and $g_\psi$ are mutually prime, then
$$ g_{\phi+\psi} =g_{g_\phi+g_\psi} = g_{P(g_\phi,g_\psi)}= P(g_\phi, g_\psi)= g_\phi+g_\psi$$
by (i) and Proposition~\ref{gphisum}(i).
\end{proof}

\begin{remark} We do not know if $ g_{P(\phi,\psi)}= P(g_\phi, g_\psi)$.
\end{remark}

\section{Green-Poisson vs. Green and Poisson}

In Example~\ref{pcgf}, we had the Green-Poisson residual functions (defined by the global conditions $w\le \phi+C$) equal to the Green functions constructed by the local condition $w\le \phi+O(1)$ near $L(\phi)$ because, in those examples,  $BL(\phi)=\emptyset$. The global conditions take care of the boundary singularities as well and are equivalent to  $w\le \phi+O(1)$ near $L(\phi)\cup BL(\phi)$.
Sometimes it is however useful to focus only on singularities inside $D$ or only near its boundary.  Let us consider
\beq\label{sphio}\cS_\phi^o =  \cS_{\phi,D}^o= \{w\in\PSH^-(D):\: w\le \phi+ O(1) \ {\rm locally\ in\ } D \}, \eeq
\beq\label{sphib}\cS_\phi^b =  \cS_{\phi,D}^b= \{w\in\PSH^-(D):\: w\le \phi+ O(1) \ {\rm locally\ near\ }\partial D \}, \eeq
and the corresponding extremal functions: the {\it residual Green function} $$g_\phi^o={\sup}^*\{w\in\cS_\phi^o\}$$ and the {\it residual Poisson function} $$g_\phi^b={\sup}^*\{w\in\cS_\phi^b\}.$$
Of course, the relation $w\le \phi+ O(1) $ should be controlled only near $L(\phi)$ in (\ref{sphio}), and near $BL(\phi)$ in (\ref{sphib}).

Evidently,
\beq\label{grin}  g_\phi\le P(g_\phi^o,g_\phi^b)\quad \forall\phi\in\PSH^-(D).\eeq
We will see later that, for functions in the Cegrell class $\E$ and small unbounded locus, there is equality here.

Similarly to Proposition~\ref{gphi} and Theorem~\ref{gphi2}, we have the corresponding properties of the residual Green and Poisson functions.

\begin{proposition}\label{tgphi} Let $\phi,\psi\in \PSH^-(D)$. Then
\begin{enumerate}
\item[(i)] $ g_{c\,\phi}^o=c\, g_\phi^o$ for any $c>0$;
\item[(ii)] if $\phi\le\psi+O(1)$ near $L(\phi)$, then $ g_\phi^o\le  g_\psi^o$;
\item[(iii)] $ g_\phi^o$ is a maximal psh function outside $L(\phi)$.
\end{enumerate}

Furthermore, if $\phi\in \PSH_s^-(D)$, then
\begin{enumerate}
\item[(iv)] $ g_\phi^o$ is a maximal psh function outside its unboudedness locus $L(g_\phi^o)$;
\item[(v)]  $g_\phi(z)^o\to0$ when $z\to \partial D \setminus \overline{L(g_\phi^o)}$;
\item[(vi)] $g_{g_\phi^o}^o= g_\phi^o$.
 \end{enumerate}
\end{proposition}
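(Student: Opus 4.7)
My plan is to follow the templates of Proposition~\ref{gphi} and Theorem~\ref{gphi2}, with the global bound $w\le\phi+C$ used there replaced by the local-in-$D$ bound $w\le\phi+O(1)$ defining $\cS_\phi^o$. Items (i)--(iv) are essentially routine. Part (i) is immediate from the bijection $w\mapsto cw$ of $\cS_\phi^o$ onto $\cS_{c\phi}^o$. For (ii), the hypothesis forces $L(\psi)\subseteq L(\phi)$ on a neighborhood of $L(\phi)$, and upgrades $w\le\phi+O(1)$ to $w\le\psi+O(1)$ locally near $L(\psi)$, giving $\cS_\phi^o\subseteq\cS_\psi^o$. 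Part (iii) is the standard Perron--Choquet--balayage argument, as in Theorem~\ref{gphi2}(i): pick $u_j\in\cS_\phi^o$ increasing q.e.\ to $g_\phi^o$, and on any open $D'\Subset D\setminus L(\phi)$ replace $u_j$ by the psh $\tilde u_j$ that agrees with $u_j$ off $D'$ and is maximal on $D'$; since the modification is disjoint from $L(\phi)$, $\tilde u_j\in\cS_\phi^o$. Part (iv) combines (iii) with pluripolarity of $L(\phi)$: $g_\phi^o$ is locally bounded on $D\setminus L(g_\phi^o)$, and by Bedford--Taylor the measure $(dd^c g_\phi^o)^n$ does not charge the pluripolar set $L(\phi)\setminus L(g_\phi^o)$, so the vanishing of $(dd^c g_\phi^o)^n$ off $L(\phi)$ extends to $D\setminus L(g_\phi^o)$.

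For (v), given $\zeta\in\partial D\setminus\overline{L(g_\phi^o)}$, I would pick a neighborhood $U$ of $\zeta$ with $U\cap\overline{L(g_\phi^o)}=\emptyset$; then $g_\phi^o$ is bounded and, by (iv), maximal on $\omega=U\cap D$. Solving the Dirichlet problem on $\omega$ with boundary data $g_\phi^o$ on $\partial U\cap D$ and $0$ on $\partial D\cap\overline U$ produces, by B-regularity, a bounded maximal psh $u$ which is continuous at $\zeta$ with $u(\zeta)=0$. Applying Lemma~\ref{CP} to $u$ and $v=g_\phi^o$, with any non-regular interior boundary points absorbed into the pluripolar exceptional set, yields $u\le g_\phi^o$ on $\omega$; together with $g_\phi^o\le 0$ this forces $g_\phi^o(z)\to 0$ as $z\to\zeta$. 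For (vi) I would adapt the proof of Theorem~\ref{gphi2}(iii): take $u\in\cS_{g_\phi^o}^o$ with $u\le g_\phi^o+C$ locally, assume WLOG $u\ge g_\phi^o$, fix $\epsilon>0$, $N>C/\epsilon$, and choose $\Psi\in\PSH^-(\Omega)$ with $D\Subset\Omega$, $\Psi\not\equiv-\infty$, and $\Psi\equiv-\infty$ on $L(g_\phi^o)$ together with its boundary accumulation set. If the set $\{u+\epsilon\Psi>(1-\epsilon)g_\phi^o\}$ is compactly contained in $D\setminus L(g_\phi^o)$ -- part (v) rules out approaches through the regular part of $\partial D$, and $\Psi\equiv-\infty$ rules out approaches through the rest -- maximality of $g_\phi^o$ off $L(g_\phi^o)$ yields $u+\epsilon\Psi\le(1-\epsilon)g_\phi^o+\epsilon$, and letting $\epsilon\to 0$ finishes the proof.

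The hard part will be the compact-containment step in (vi). Unlike the Green--Poisson case treated by Theorem~\ref{gphi2}(iii), the class $\cS_{g_\phi^o}^o$ controls $u$ only locally inside $D$ and not at all near $\partial D$, forcing the auxiliary $\Psi$ to vanish on the entire boundary accumulation set $\overline{L(g_\phi^o)}\cap\partial D$ of the singularities of $g_\phi^o$. The existence of such a $\Psi$ is not automatic from $\phi\in\PSH_s^-(D)$, because the closure of a pluripolar set need not be pluripolar; a natural workaround is to exhaust $D$ by subdomains $D'\Subset D$ in which $L(g_{\phi,D'}^o)$ stays compactly away from $\partial D'$, argue there, and pass to the limit via monotonicity.
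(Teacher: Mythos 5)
The paper gives no explicit proof of this proposition; it simply notes that the arguments run ``similarly to Proposition~\ref{gphi} and Theorem~\ref{gphi2},'' and that is precisely the route you take. Parts (i)--(iv) are straightforward adaptations, and the structure of your (v) and (vi) mirrors the paper's two-step boundary argument and idempotency argument, which is the intended approach.

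The reservation you raise at the end, however, is unfounded. You worry that the boundary accumulation set $\overline{L(g_\phi^o)}\cap\partial D$ might fail to be pluripolar (``the closure of a pluripolar set need not be pluripolar''), so that a suitable auxiliary $\Psi$ may not exist. In fact, if $\zeta\in\overline{L(g_\phi^o)}\cap\partial D$, then points of $L(g_\phi^o)$ accumulate at $\zeta$, so $g_\phi^o$ is unbounded on every neighborhood of $\zeta$ intersected with $D$, i.e.\ $\zeta\in BL(g_\phi^o)$. Since $\phi\le g_\phi^o$ (because $\phi\in\cS_\phi^o$), every boundary unboundedness point of $g_\phi^o$ is one for $\phi$, so $BL(g_\phi^o)\subseteq BL(\phi)$, which is pluripolar by the hypothesis $\phi\in\PSH_s^-(D)$. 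Consequently $L(g_\phi^o)\cup\bigl(\overline{L(g_\phi^o)}\cap\partial D\bigr)\subseteq L(\phi)\cup BL(\phi)$ is pluripolar, a $\Psi$ vanishing identically on it exists (exactly as in the proof of Theorem~\ref{gphi2}(iii), with $L(g_\phi),BL(g_\phi)$ replaced by $L(g_\phi^o),BL(g_\phi^o)$), and your exhaustion workaround is unnecessary. The same inclusion also justifies the compact-containment step you flag as the ``hard part,'' and it shows that the pluripolar exceptional set in your use of Lemma~\ref{CP} for (v) is controlled by $BL(\phi)$, just as in the paper's proof of Theorem~\ref{gphi2}(ii).
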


\begin{proposition}\label{tgphi_b} Let $\phi,\psi\in \PSH^-(D)$. Then
\begin{enumerate}
\item[(i)] $ g_{c\,\phi}^b=c\, g_\phi^b$ for any $c>0$;
\item[(ii)] if $\phi\le\psi+O(1)$ near $BL(\phi)$, then $ g_\phi^b\le  g_\psi^b$;
\item[(iii)] $ g_\phi^b$ is a maximal psh function in $D$.
\end{enumerate}

Furthermore, if $\phi\in \PSH_s^-(D)$, then
\begin{enumerate}
\item[(iv)]  $g_\phi(z)^b\to0$ when $z\to \partial D \setminus {BL(g_\phi^b)}$;
\item[(v)] $g_{g_\phi^b}^b= g_\phi^b$.
 \end{enumerate}
\end{proposition}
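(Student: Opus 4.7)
The plan is to mirror the arguments for Proposition~\ref{tgphi} and Theorem~\ref{gphi2}, adapting each step to the boundary-only setting of $\cS_\phi^b$. Items (i) and (ii) are direct manipulations of the defining envelope: (i) uses the bijection $w\mapsto w/c$ between $\cS_{c\phi}^b$ and $\cS_\phi^b$, and (ii) composes $w\le\phi+O(1)$ with $\phi\le\psi+O(1)$ near $BL(\phi)$, noting that near boundary points where $\psi$ is locally bounded the constraint $w\le\psi+O(1)$ is automatic from $w\le 0$. For (iii), a standard Perron-Bremermann modification works: given $w\in\cS_\phi^b$ and a ball $B\Subset D$, replace $w$ on $B$ by its maximal bounded psh extension with the same values on $\partial B$. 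The modification is interior and does not affect the constraint near $\partial D$, so the modified function still belongs to $\cS_\phi^b$ and dominates $w$; Choquet's lemma then yields maximality of $g_\phi^b$ on $B$, hence on all of $D$.

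For (iv), I would follow the two-step pattern of Theorem~\ref{gphi2}(ii). First, at $p\in\partial D\setminus BL(\phi)$, use a strong psh barrier $\rho_p$ provided by B-regularity to form the competitor $u$ equal to $\max\{\phi,c\rho_p\}$ on a suitable neighbourhood $U$ of $p$ and to $\phi$ on $D\setminus U$, with $c$ chosen so that $\phi>c\rho_p$ on $\partial U\cap D$. One checks that $u\in\cS_\phi\subseteq\cS_\phi^b$ and that $u(z)\to 0$ as $z\to p$, so $g_\phi^b\ge u\to 0$. This in particular gives $BL(g_\phi^b)\subseteq BL(\phi)$. Second, for the possibly remaining points $p\in BL(\phi)\setminus BL(g_\phi^b)$, where $g_\phi^b$ is locally bounded, I would choose a neighbourhood $U$ of $p$ with $\overline U\cap\overline{L(g_\phi^b)}=\emptyset$ and apply Lemma~\ref{CP} on $\omega=U\cap D$ to compare $g_\phi^b$ with the bounded maximal psh solution $v$ of the homogeneous Monge-Amp\`ere equation with boundary data $g_\phi^b$ on $\partial U\cap D$ and $0$ on $\overline U\cap\partial D$; $v$ is continuous up to the boundary by B-regularity, so $g_\phi^b\le v\to 0$ at $p$, yielding (iv).

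For (v), the inequality $g_{g_\phi^b}^b\ge g_\phi^b$ is immediate since $g_\phi^b\in\cS_{g_\phi^b}^b$ with constant $0$. For the reverse, take $u\in\cS_{g_\phi^b}^b$ with $u\le g_\phi^b+C$ near $BL(g_\phi^b)$, and assume without loss of generality $u\ge g_\phi^b$. I would adapt the idempotency argument of Theorem~\ref{gphi2}(iii). A preliminary application of Lemma~\ref{CP} to truncations of $u$ and $g_\phi^b$, combined with the boundary behaviour from (iv), promotes the local bound to $u\le g_\phi^b+C$ globally in $D$. Fix $\epsilon>0$ and $N>C/\epsilon$; then on $\{g_\phi^b<-N\}$ one has $u\le g_\phi^b+C<(1-\epsilon)g_\phi^b$. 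Using the small unbounded locus hypothesis, pick $\Psi\in\PSH^-(\Omega)$ with $D\Subset\Omega$, $\Psi\not\equiv-\infty$, equal to $-\infty$ on the pluripolar set $L(g_\phi^b)\cup BL(g_\phi^b)$. On $\Omega'=\{g_\phi^b>-N\}$ the function $(1-\epsilon)g_\phi^b$ is maximal, and $\limsup(u+\epsilon\Psi-(1-\epsilon)g_\phi^b)\le 0$ at $\partial\Omega'$: at $\{g_\phi^b=-N\}\cap D$ the strict inequality $-N+C<-(1-\epsilon)N$ does it, and at $\partial\Omega'\cap\partial D\subseteq\partial D\setminus BL(g_\phi^b)$ this follows from (iv) together with $u\le 0$ and $\Psi\le 0$. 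The maximum principle for maximal psh functions then gives $u+\epsilon\Psi\le(1-\epsilon)g_\phi^b$ on $\Omega'$; on $\{g_\phi^b\le -N\}$ the same inequality is a direct consequence of $u<(1-\epsilon)g_\phi^b$ and $\Psi\le 0$. Letting $\epsilon\to 0$ yields $u\le g_\phi^b$ quasi everywhere, hence everywhere.

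The main obstacle is the coordination in (v) of the auxiliary pluripolar-exception $\Psi$ with the two-region decomposition along $\{g_\phi^b=-N\}$. A subtle point is that the hypothesis $u\le g_\phi^b+C$ is imposed only near $BL(g_\phi^b)$ and not near the interior singular set $L(g_\phi^b)$, so one must first propagate the bound throughout $D$ via the preliminary comparison argument relying essentially on the boundary behaviour (iv); only then can the scaling inequality $u<(1-\epsilon)g_\phi^b$ on $\{g_\phi^b<-N\}$ be applied globally and the maximum principle invoked on $\Omega'$.
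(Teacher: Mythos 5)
Your proof follows exactly the route the paper intends: the paper gives no explicit argument for this proposition, stating only that it follows "similarly to Proposition~\ref{gphi} and Theorem~\ref{gphi2}", and your adaptation of the barrier argument, of Lemma~\ref{CP}, and of the $\epsilon\Psi$-idempotency scheme to the boundary class $\cS_\phi^b$ is the intended one and is correct in substance. Two small points. First, in (ii) your verification covers points of $BL(\phi)$ (by composing the two $O(1)$ bounds) and points where $\psi$ is locally bounded (automatic), but not points of $BL(\psi)\setminus BL(\phi)$, where $w\le\psi+O(1)$ is a genuine constraint that the stated hypothesis does not supply; this gap is really in the proposition's hypothesis (which should read "near $BL(\psi)$", as the degenerate case $\phi$ bounded, $\psi$ a Poisson kernel shows), so your proof is as complete as the statement allows, but you should flag the needed correction rather than pass over it. Second, in (v) the inclusion $\partial\Omega'\cap\partial D\subseteq\partial D\setminus BL(g_\phi^b)$ is not true in general ($\{g_\phi^b>-N\}$ can accumulate at points of $BL(g_\phi^b)$); this is harmless because at such points $\Psi\to-\infty$ while $(1-\epsilon)g_\phi^b\ge-(1-\epsilon)N$ on $\Omega'$, so the boundary inequality still holds, but the justification should be stated that way.
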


In addition, we have

\begin{proposition} \label{separate}
If $\phi\in\PSH_s^-(D)$, then
\begin{enumerate}
\item[(i)] $g_{g_\phi^o}=g_{g_\phi}^o=g_\phi^o$;
\item[(ii)] $g_{g_\phi^b}=g_{g_\phi}^b=g_\phi^b$;
\end{enumerate}
\end{proposition}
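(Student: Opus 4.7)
The plan is to derive all three equalities in~(i) from the local idempotency $g_{g_\phi^o}^o=g_\phi^o$ already obtained in Proposition~\ref{tgphi}(vi), the monotonicity of $g_{\,\cdot\,}^o$ in Proposition~\ref{tgphi}(ii), and two transparent set inclusions between the defining envelope classes $\cS_{\bullet}$ and $\cS_{\bullet}^o$. Part~(ii) will then follow by the exactly parallel argument with Propositions~\ref{tgphi_b}(ii) and~\ref{tgphi_b}(v) replacing their ``$o$''-analogues, and using $g_\phi\le g_\phi^b$ (from (\ref{grin})) in place of $g_\phi\le g_\phi^o$.

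For the equality $g_{g_\phi^o}=g_\phi^o$, the inequality $g_\phi^o\le g_{g_\phi^o}$ is immediate since $g_\phi^o\in\cS_{g_\phi^o}$ (take $C_v=0$). For the reverse, any $w$ with the global bound $w\le g_\phi^o+C$ of $\cS_{g_\phi^o}$ certainly satisfies the same bound locally, so $\cS_{g_\phi^o}\subset\cS_{g_\phi^o}^o$, whence
$g_{g_\phi^o}\le g_{g_\phi^o}^o=g_\phi^o$
by Proposition~\ref{tgphi}(vi).

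For $g_\phi^o=g_{g_\phi}^o$, the inclusion $\cS_\phi^o\subset\cS_{g_\phi}^o$ gives $g_\phi^o\le g_{g_\phi}^o$: the pointwise bound $\phi\le g_\phi$ together with $L(g_\phi)\subset L(\phi)$ shows that any $w\le\phi+O(1)$ near $L(\phi)$ also satisfies $w\le g_\phi+O(1)$ on the possibly smaller set $L(g_\phi)$. Conversely, from the trivial inclusion $\cS_\phi\subset\cS_\phi^o$ one reads off $g_\phi\le g_\phi^o$ pointwise, and then applying the monotonicity in Proposition~\ref{tgphi}(ii) to the pair $(g_\phi,g_\phi^o)$ yields $g_{g_\phi}^o\le g_{g_\phi^o}^o$; Proposition~\ref{tgphi}(vi) then closes the chain with $g_{g_\phi^o}^o=g_\phi^o$.

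The anticipated main obstacle is not analytic but rather the bookkeeping of which unbounded locus controls each local inequality: because $L(g_\phi)$ may be strictly smaller than $L(\phi)$, one must check that the comparison $\phi\le g_\phi$ transports membership in $\cS_\phi^o$ to membership in $\cS_{g_\phi}^o$ over the correct neighborhoods, and symmetrically for $BL(\cdot)$ in part~(ii). Once this is spelled out, no comparison principle or boundary estimate beyond the machinery already invoked for the local idempotencies is needed.
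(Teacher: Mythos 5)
Your proof is correct and follows essentially the same chain of inequalities as the paper's: the two lower bounds $g_\phi^o\le g_{g_\phi^o}$ and $g_\phi^o\le g_{g_\phi}^o$ (from class inclusions and $\phi\le g_\phi$), then the two upper bounds via $\cS_\bullet\subset\cS_\bullet^o$, the monotonicity of $g^o_\bullet$, and the local idempotency $g_{g_\phi^o}^o=g_\phi^o$; part (ii) is the parallel argument with $b$ replacing $o$. You even silently correct the paper's misreference to Proposition \ref{tgphi}(v), citing the intended (vi) for the idempotency.
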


\begin{proof} We have $g_\phi^o\le g_{g_\phi^o}$ and $g_\phi^o\le g_{g_\phi}^o$. On the other hand, $g_{g_\phi^o}\le g_{g_\phi^o}^o=g_\phi^o$ and $g_{g_\phi}^o\le g_{g_\phi^o}^o=g_\phi^o$ by Proposition~\ref{tgphi}(v), which gives us (i).
Assertion (ii) is proved similarly.
\end{proof}

\medskip

It is not surprising that, in the definition of the class $\cS_\phi$, the relation $w\le \phi+ O(1)$ near $L(\phi)$ is not essential if $\phi$ is a maximal psh function in $D$.

\begin{proposition}\label{gbmax}
If $H\in\PSH^-(D)$ is maximal, then $g_H=g_H^b$.
\end{proposition}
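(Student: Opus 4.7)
The plan is to reduce everything to proving that the two defining classes of competitors coincide: $\cS_H = \cS_H^b$. Once this is established, the equality $g_H = g_H^b$ follows immediately by taking upper regularized suprema. The inclusion $\cS_H \subseteq \cS_H^b$ is entirely trivial from the definitions (\ref{sphib}) and the definition of $\cS_H$: a global bound $w \le H + C_w$ on $D$ in particular gives a local bound near $\partial D$, so $g_H \le g_H^b$ holds without any hypothesis on $H$.

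For the reverse inclusion, fix $w \in \cS_H^b$. Using compactness of $\partial D$, the local control near $\partial D$ can be promoted to a uniform one: there exist a compact set $K \subset D$ and a constant $C > 0$ such that $w \le H + C$ on $D \setminus K$. Equivalently, the function $v := w - C \in \PSH(D)$ (bounded above) satisfies $v \le H$ off the compact subset $K \subset D$. Invoking the maximality of $H$ in $D$—in its standard global formulation: any psh function in $D$ which is bounded above and dominated by $H$ outside a compact subset of $D$ is dominated by $H$ throughout $D$—one concludes $v \le H$ on all of $D$, i.e., $w \le H + C$ globally. Hence $w \in \cS_H$, proving $\cS_H^b \subseteq \cS_H$, and therefore $g_H^b \le g_H$.

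The main (and essentially only) obstacle is justifying this maximality step when $H$ is itself unbounded, since the classical formulation of maximality is stated for locally bounded psh functions. For locally bounded maximal $H$ the extension is immediate from $(dd^c H)^n = 0$ and the domination principle. In the unbounded case, I would reduce to the bounded situation by truncation: setting $H_k := \max\{H,-k\}$ and $v_k := \max\{v,-k\}$, the inequality $v_k \le H_k$ persists on $D \setminus K$, and on the plurifine open set $\{H > -k\}$ one has $H_k = H$, so that $(dd^c H_k)^n = 0$ there by plurifine locality of the Monge--Amp\`ere operator. Applying Lemma~\ref{CP} on suitable exhausting subdomains—where the boundary values are controlled thanks to $v_k \le H_k$ near $\partial D$—yields $v_k \le H_k$ everywhere, and letting $k \to \infty$ delivers $v \le H$ on $D$. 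This completes the proof of $g_H = g_H^b$.
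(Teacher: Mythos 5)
Your proof follows the same route as the paper's: the inclusion $\cS_H\subseteq\cS_H^b$ is trivial, compactness of $\partial D$ (together with boundedness of $H$ near $\partial D\setminus BL(H)$) upgrades the local bounds to $w\le H+C$ outside a compact $K\Subset D$, and maximality of $H$ finishes the argument. Where you go astray is in treating the maximality step as a genuine obstacle requiring truncation. Maximality of a psh function $u$ on $D$ is \emph{defined} for arbitrary (not necessarily locally bounded) $u$ by the comparison property: for every open $G\Subset D$ and every $v\in\PSH(G)$, u.s.c.\ on $\overline G$, with $v\le u$ on $\partial G$, one has $v\le u$ on $G$. Since $\{w>H+C\}\subseteq K$, taking $G$ to be an open relatively compact neighbourhood of $K$ gives $w-C\le H$ on $\partial G$, hence on $G$, hence $w\le H+C$ on all of $D$; no Monge--Amp\`ere considerations are needed. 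It is only the \emph{characterization} of maximality by $(dd^cu)^n=0$ that requires $u$ locally bounded or $u\in\E$. Your truncation paragraph is therefore superfluous, and it is also the one shaky step in the write-up: for $H$ unbounded and not assumed to lie in $\E(D)$, the assertion that $(dd^cH_k)^n=0$ on $\{H>-k\}$ ``by plurifine locality'' is not justified, since that locality argument compares $\max\{H,-k\}$ with $H$ itself, whose Monge--Amp\`ere measure need not be defined. Deleting that paragraph and invoking the definition of maximality directly leaves exactly the paper's proof.
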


\begin{proof} Evidently, $g_H\le g_H^b$. On the other hand, let $v\in \cS_H^b$, then $v\le H+C_1$ for some $C_1>0$ in $\omega_1=D\cap U_1$ for a neighbourhood $U_1$ of $BL(H)$. In addition, $H\ge -C_2$ for some $C_2>0$ in $\omega_2=D\cap U_2$ for a neighbourhood $U_2$ of $\partial D\setminus U_1$, so $v\le H+C_2$ there. Set $C=\max C_k$. Since $H$ is maximal in $D$ and $\{v> H+C\}\Subset D$, we get $v\le H+C$ in the whole $D$, so $v\in \cS_H$.
\end{proof}

\medskip
Here is an instructive example of functions with purely boundary singularities.

\begin{example}\label{Poisson} {\rm Let
$\Omega\in\DSH_s^-(\D) $ be the negative Poisson kernel for the unit disk $\D$:
$$\Omega(z)=\Omega_{\zeta,\D}(z)=\frac{|z|^2-1}{|1- z\bar\zeta|^2}, \quad \zeta\in\partial\D.$$
Since $\Omega\in L_{loc}^\infty(\D)$, we have $ g_\Omega^o= 0$. On the other hand, since $-\Omega$ is a minimal positive harmonic function in the sense of Martin, the best harmonic majorant of any $w\in \cS_\Omega$ is dominated by $\Omega$, so
$ g_\Omega=g_\Omega^b=\Omega<0$.

A similar effect holds for $\Omega\in\PSH_s^-(\Bn)$ defined by
$$\Omega(z)=\Omega_{\zeta,\Bn}(z)=\frac{|z|^2-1}{|1-\langle z,\zeta\rangle|^2}, \quad \zeta\in\partial\Bn,$$
the harmonicity being replaced by maximality. Note that the restriction of  $\Omega_{\zeta,\Bn}(z)$ to the complex line $z=\lambda\zeta$ equals $\Omega_{1,\D}(\lambda)$.

Furthermore, it was shown in \cite{BPT}, \cite{HW}, \cite{BST} that if $D\Subset\Cn$ is a strongly pseudoconvex domain with smooth boundary, then for any $\zeta\in\partial D$ there exists the {\it pluricomplex Poisson kernel} $\Omega_\zeta\in\PSH^-(D)$, which is maximal psh in $D$, continuous in ${\overline D}\setminus\{\zeta\}$, equal to $0$ on ${\partial D}\setminus\{\zeta\}$, and such that $\Omega_\zeta(z) \approx - |z-\zeta|^{-1}$ as $z\to \zeta$ nontangentially. We then have, by \cite[Prop. 7.1]{BST}, $P(\Omega_\zeta+C,0)=\Omega_\zeta$ for any $C>0$ and so,  $g_{\Omega_\zeta}=g_{\Omega_\zeta}^b=\Omega_\zeta$.
}
\end{example}

\begin{remark}{\rm When $n=1$, a boundary singularity of $\phi\in\SH^-(\D)$ must be strong enough in order to survive the transition to $g_\phi$. Namely, if $BL(\phi)=\zeta\in\partial\D$, then $g_\phi^b=c\,\Omega$ for some $c\ge 0$. So, if $|z-\zeta|\phi(z)\to 0$ for such a $\phi$ as $z\to\zeta$, then $g_\phi^b=0$.
We do not know if this extends to the case $n>1$.}
\end{remark}

In the cases presented in Example~\ref{pcgf}, we have $g_\phi=g_\phi^o $ and $g_\phi^b=0$, while the latter property is far from being true when $L(\phi)$ is not compactly supported inside the domain.

\begin{example}\label{Gball} {\rm One can compare the introduced extremal functions with the Green functions with singularities along complex spaces, introduced in \cite{RSig2}.

1. When $\phi=\log|f|$ for a holomorphic map $f$ to $\C^N$, the function $ g_\phi^o$ coincides, by definition, with the Green function $G_\I$ for the ideal $\I$ generated by the components of $f$, see \cite{RSig2}.

2. In particular, let $\phi_1(z)=\log|z_1|$, then the Green function $ g_{\phi_1,\D^n}^o=G_{\I_1,\D^n}$ for $\I_1=\langle z_1\rangle$ in the unit polydisk $\D^n$ equals $\phi_1\in \cS_{\phi_1,\D^n} $ \cite{LS} and thus coincides with the Green-Poisson function $g_{\phi_1,\D^n}$. Note that it is not equal to zero on the whole $\partial \D^n\setminus \{z_1=0\}$, which does not, however, contradict Proposition~\ref{tgphi}(v) because polydisks are not B-regular domains.

3. For the unit ball $\Bn$,
$$ g_{\phi_1,\Bn}^o=G_{\I_1,\Bn}=\log\frac{|z_1|}{\sqrt{1-|z'|^2}},$$
see \cite{LS}, so $g_{\phi_1,\Bn}\not\in\cS_{\phi,\Bn}$. Note, however,  that $ g_{\phi_1,\Bn}^o=g_{\phi_1,\Bn}$, because the former is the upper envelope of the restrictions of the functions
$$ g_{\phi_1,\rho\Bn}^o(z)= \log\frac{|z_1|}{\sqrt{\rho^2-|z'|^2}}\in \cS_{\phi, \Bn}, \quad \rho>1,
$$
to $\Bn$. In this case, we have $ g_{\phi_1,\Bn}^b=g_{\phi_1,\Bn}$ as well since the condition $w\le \log|z_1| +O(1)$ near $\partial\Bn$ propagates inside $\Bn$, which follows essentially from Siu's semicontinuity theorem.

4. More generally, if the map $f$ considered in the first example above is holomorphic in a neighbourhood of $\overline D$, then the relation $w\le \log|f| +O(1)$ near $\partial D$ propagates inside $D$ as well \cite{R09b} and so, $g_\phi=g_\phi^b$. In addition, if the Green functions $G_{\I,D_k}\searrow G_{\I,D}=g_\phi^o$ for domains $D_k\Supset D$ decreasing to $D$, then $g_\phi\ge g_\phi^o$ because $g_\phi\ge G_{\I,D_k}$ for any $k$ and so, $g_\phi = g_\phi^o$ as well. }
\end{example}

\begin{remark}{\rm Even more generally, we believe $g_{\phi,D}=g_{\phi,D}^o$, provided $\phi\in \PSH(D')$ with $D\Subset D'$. Later on, in Corollary~\ref{geD1}, we will prove this for $\phi$ in the Cegrell class $\E(D')$.}
\end{remark}

%
%

\section{Model and approximately model singularities}
The class $\cS_\phi$ need not be closed, so  $g_\phi$ can have weaker singularity than $\phi$. For example, if $BL(\phi)=\emptyset$ and $(dd^c \phi)^n$  does not charge any pluripolar set, then $g_\phi=0$, see \cite{R06}.

We say, as for quasi-psh functions on compact manifolds \cite{DNL18a}, that $\phi$ has {\it model singularity} in $D$ if $\phi\sim_D g_\phi$, i.e., $g_\phi\le\phi+C$  in $D$, in which case $g_\phi\in\cS_\phi$.
For instance, the function $\log|z-a|$ has model singularity in any $D$ containing $a$ and, more generally, so does any  $\phi$ which is maximal and bounded outside a finite set $A\Subset D$. Another example of model singularity is $\phi=\log|F|$ for a holomorphic map $F:D\to{\mathbb B}^N$, $N\ge n$, with $|F|\ge\delta>0$ near $\partial {\mathbb B}^N$ \cite{RSig2}. Functions with isolated, but non-model singularities (both interior and boundary), are presented in Examples~\ref{exaas}.2 -- \ref{exaas}.4.

On the other hand, the singularity of the function $\log|z_1|$ is not model in $\Bn$, see Example~\ref{Gball}.3. More generally, a typical function $\phi$ with $L(\phi)\not\Subset D$ does not have model singularity. Indeed, let us assume $u\in\PSH^-(D)$ can be extended to a larger domain $D'$. If $g_\phi(\zeta_0)=\phi(\zeta_0)=-\infty $ for some $\zeta_0\in D'\cap \partial D$, then $g_\phi(z_k)-\phi(z_k)\to +\infty$ when $z_k\to\zeta_0$ inside $\{g_\phi(z)>-1\}$.

This makes us want to relax the requirements to similarity of the singularities of $\phi$ and $g_\phi$.
We say that $\phi$ has {\it approximately model  singularity} in $D$ if $g_\phi\preceq_D \sigma \phi$ for any $\sigma\in (0,1)$, which is equivalent to the condition
$$ \frac{g_\phi(z)}{\phi(z)}\to 1 \quad {\rm as\ } \phi(z)\to -\infty.$$
This fixes Example~\ref{exaas}.2, however it will not work in some natural situations as in Example~\ref{Gball}.3, where the ratio equals $1/2$ for all $z\in\Bn$ with $|z_1|=\epsilon$ and $|z'|=\sqrt{1-\epsilon}$.

Having this example in mind, we will say that $\phi\in\PSH^-(D)$ has {\it locally model} or {\it locally approximately model} singularity if
if its restriction to any $D'\Subset D$ has model (resp., approximately model) singularity with respect to $g_{\phi,D'}$; since $g_\phi\le g_{\phi,D'}$, we then have $g_\phi\preceq_{D'} \phi$ or $g_\phi\preceq_{D'} \sigma\phi$, respectively. Evidently, having locally approximately model singularity is equivalent to the condition
$$ \frac{g_\phi(z)}{\phi(z)}\to 1 \quad {\rm as\ } \phi(z)\to -\infty,$$
and the local variant implies global in the absence of boundary singularities.

Here we present some instances when such singularities appear naturally.

A function $\phi\in\PSH^-(D)$ is said to have {\it analytic singularity} in $D$ if there exist $c> 0$ and a bounded holomorphic mapping $f: D\to\C^N$ such that
$\phi\sim c\log|f|$; if this is the case, then $g_\phi= c\,g_{\log|f|}$.
As mentioned above, a noncompact analytic singularity cannot be model. On the other hand, by \cite[Thm. 2.8]{RSig2}, every analytic singularity $\phi$ is locally model; then it will be model if $BL(\phi)=\emptyset$ or, more generally, if $\phi$ extends to a function with analytic singularity in $D'\Supset D$.

Sums and maxima of functions with analytic singularities need not have analytic singularities. This is one of motivations to consider a wider class introduced in \cite{R13a}.
A function $\phi\in\PSH^-(D)$ is said to have {\it asymptotically analytic singularity} if for any $\epsilon>0$ there exists $\phi_\epsilon$ with analytic singularity such that $(1+\epsilon)\phi_\epsilon(z) \preceq \phi \preceq (1-\epsilon)\phi_\epsilon(z)$, that is,
\beq\label{aas}(1+\epsilon)\phi_\epsilon(z)-C_\epsilon \le \phi \le (1-\epsilon)\phi_\epsilon(z)+C_\epsilon  \eeq
for some $C_\epsilon>0$.
Note that $L(\phi)=L(\phi_\epsilon)$ for any $\epsilon>0$, so it is a closed analytic variety.

\begin{example}\label{exaas} {\rm 1. Simple examples of asymptotically analytic, but not analytic, singularities are given by  $\log(|f_1|+|f_2|^\gamma)$ and $\log|f_1|+\gamma\log|f_2|$ for irrational $\gamma>0$.

2. The function $\phi=\log|z|-|\log|z||^{1/2}$  has asymptotically analytic singularity in any $D\Subset \Bn$ because it satisfies there (\ref{aas}) with $\phi_\epsilon =\log|z|$ for any $\epsilon>0$. Since $g_\phi=\log|z|+O(1)$, the singularity is not model, but it is still approximately model.

3. On the other hand, the function $\psi=\max\{\log|z_1|,-|\log|z_2||^{1/2}\}$ does not have asymptotically analytic singularity near $0$, which can easily be checked by considering its restriction to the line $\{z_1=0\}$. Moreover, the singularity is not even approximately model because $g_{\psi}\ge g_{\phi_\epsilon}\ge \phi_\epsilon:= \max\{\log|z_1|,\epsilon\log|z_2|\}\to 0$ outside $\{z_1=0\}$ as $\epsilon \to 0$, so $g_\psi=0$. To get a (toric) psh function $\phi$ with $g_\phi\neq 0$ which is not approximately model, take $\phi=\log|z|+\psi$, then, by Proposition~\ref{gphisum}(ii), $g_\phi=g_{\log|z|}=\log|z| + O(1)$ in any $D\subset\Bn$ containing $0$, and
$$\limsup_{z\to 0}\frac{\phi(z)}{g_\phi(z)}\ge 2$$ because $\phi(z)= 2\log|z_1|$ on $\{z_2=0\}$.

4. If $\phi(z)=\log|z-a|-C $ with $|a|=1$ and $C>>0$, then its Green-Poisson function $g_\phi$ for $\Bn$ is identical zero. For $n=1$, this is because the Poisson kernel with pole at $a$ is a minimal positive harmonic function in the unit disk, while for $n>1$ this follows from a more general fact given in Corollary~\ref{geD1} in the next section. Therefore, the singularity of $\phi$ is model in any $\B_r^n$ with $r\neq 1$, while it is not even approximately model in $\Bn$.
}
\end{example}

\begin{proposition}\label{summaxaas} If $\phi$ and $\psi$ have asymptotically analytic singularities, then so do $\phi+\psi$, $\max\{\phi,\psi\}$, and $P(\phi,\psi)$.
\end{proposition}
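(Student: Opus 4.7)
The plan is to construct an analytic (or asymptotically analytic) approximant for each of the three combined functions out of analytic approximants $\phi_\epsilon=c_1\log|f_1|$, $\psi_\epsilon=c_2\log|f_2|$ supplied by the asymptotically analytic hypothesis (rescaled so $|f_i|\le 1$, hence $\phi_\epsilon,\psi_\epsilon\le 0$), then combine the sandwiches by a diagonal argument in two small parameters $\epsilon,\delta\to 0$.

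First, for each operation $O\in\{+,\max,P\}$, monotonicity of $O$ in both arguments, positive homogeneity, and (for $P$) the translation identity $P(u+c,v+c)=P(u,v)+c$ from Proposition~\ref{Puv}, transfer the hypothesized sandwich to
\[
(1+\epsilon)\,O(\phi_\epsilon,\psi_\epsilon)-C\le O(\phi,\psi)\le (1-\epsilon)\,O(\phi_\epsilon,\psi_\epsilon)+C.
\]
It remains to show each $O(\phi_\epsilon,\psi_\epsilon)$ is itself asymptotically analytic. For any $\delta>0$, I would approximate the possibly irrational $c_i$ by rationals $\tilde c_i=p_i/d$ with $\tilde c_i\le c_i\le(1+\delta)\tilde c_i$; since $\log|f_i|\le 0$, this costs only a $(1\pm\delta)$ distortion that propagates through $O$. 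With rational coefficients, the sum becomes $(1/d)\log|F|$ for $F$ the holomorphic map with components $f_{1,i}^{p_1}f_{2,j}^{p_2}$, and the maximum becomes $(1/d)\log|(f_1^{p_1},f_2^{p_2})|$ (tupling); both are outright analytic singularities, so the diagonal argument $\epsilon,\delta\to 0$ closes the cases $O=+$ and $O=\max$.

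For the rooftop case, applying Proposition~\ref{analenv} to $\log|F_1|,\log|F_2|$ (with $F_i=f_i^{p_i}$ and the generic codimension condition) gives $P(\log|F_1|,\log|F_2|)=\log|FH_1H_2|+v$, where $F=\gcd(F_1,F_2)$, $H_i=F_i/F$, and $v\ge 0$ is a maximal psh function on $D$ satisfying $v\le -\log|H_1H_2|$ inherent in the proof there. Thus $P(\phi_\epsilon,\psi_\epsilon)$ is trapped between the two analytic singularities $(1/d)\log|FH_1H_2|$ and $(1/d)\log|F|$. The main obstacle is extracting from this two-sided analytic control a single analytic $\tilde\phi_\eta$ witnessing the $(1\pm\eta)$-sandwich required by asymptotic analyticity, since the naive rational-power interpolations between the two bounds fail pointwise when $|\log|H_1H_2||$ is large relative to $|\log|F||$. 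The route I would take is to use the maximality of $v$ together with the B-regularity of $D$, via the domination principle of Lemma~\ref{CP}, to conclude that $v$ is bounded on $D$; then $P(\phi_\epsilon,\psi_\epsilon)\sim (1/d)\log|FH_1H_2|$ has analytic singularity, and the diagonal argument as in the first two cases concludes. In every explicit example one can compute $v\equiv 0$, which makes this general conclusion especially plausible.
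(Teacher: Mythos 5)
Your reduction to the analytic approximants $\phi_\epsilon,\psi_\epsilon$, the rational approximation of the exponents (using $\log|f_i|\le 0$), and the treatment of the sum (componentwise products) and of the maximum (tupling) coincide with the paper's argument and are correct. The monotonicity/homogeneity/translation properties of $P$ that you use to transfer the sandwich are also all valid.

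The rooftop case, however, contains a genuine gap, and the step you defer is not merely unproven -- it is false. You propose to conclude via Lemma~\ref{CP} that the non-negative maximal function $v$ from Proposition~\ref{analenv} is bounded on $D$, so that $P(\phi_\epsilon,\psi_\epsilon)\sim \frac1d\log|FH_1H_2|$. Take $D=\B^2$, $H=\exp\bigl(-\frac{1+z_1}{1-z_1}\bigr)$ (zero-free, holomorphic, $|H|<1$ on $\B^2$), and $f_1=H$, $f_2=H^2$. The factorization $f_j=fh_j$ with $f=1$, $h_j=f_j$ satisfies the codimension hypothesis vacuously, and $P(\log|f_1|,\log|f_2|)=\min\{\log|H|,\,2\log|H|\}=2\log|H|$, while $\log|f_1f_2|=3\log|H|$; hence $v=-\log|H|=\Ree\frac{1+z_1}{1-z_1}$ is pluriharmonic, non-negative, maximal, and unbounded as $z_1\to 1$. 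This also shows why Lemma~\ref{CP} cannot be applied: its hypotheses demand boundary control of $v$ quasi-everywhere on $\partial D$, and the only a priori bound $v\le-\max_i\log|H_i|$ blows up precisely where both $H_i$ become small at the boundary. Note that in this example the proposition's conclusion still holds (here $P(\phi,\psi)=\psi$ is itself analytic); what fails is your identification of $\log|\mathrm{lcm}|$, computed from an arbitrary admissible factorization, as the analytic model -- the correction $v$ genuinely carries boundary information, so the two-sided trapping between $\log|FH_1H_2|$ and $\log|F|$ must be upgraded by a different mechanism than global boundedness of $v$. (The paper's own proof of this case is terse and likewise rests on the lcm map via Proposition~\ref{analenv}, but your concrete route to closing it does not work.)
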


\begin{proof} It suffices to proof this for $\phi= a\log|f|$ and $\psi=b\log|h|$ with $f,h$ holomorphic mappings to $\C^M$ and $\C^N$. If $a/b$ is rational, say, $a=\frac{p}{q}b$, then
$\phi+\psi= \frac{b}{q}\log|F|$, where $F$ is the mapping to $\C^{MN}$ with components $f_j^ph_k^q$, $1\le j,k\le N$. Otherwise, for any $\epsilon\in(0,1)$ one can find $a_\epsilon,b_\epsilon\in\Q_+$ such that
$$(1+\epsilon)a_\epsilon \log|f|\le \phi \le (1-\epsilon)a_\epsilon \log|f|$$
and
$$(1+\epsilon)b_\epsilon \log|h|\le \psi \le (1-\epsilon)b_\epsilon \log|h|,$$
so
$$(1+\epsilon)c_\epsilon \log|F_\epsilon|\le \phi \le (1-\epsilon)c_\epsilon \log|F_\epsilon|$$
with $a_\epsilon=\frac{p_\epsilon}{q_\epsilon}b$, $c_\epsilon=b_\epsilon/q_\epsilon$, and $F_\epsilon$ the mapping with components $f_j^{p_\epsilon}h_k^{q_\epsilon}$.

For $\max\{\phi,\psi\}$ and $P(\phi,\psi)$, the proofs are similar. In the former case, it is done by using the mapping to $\C^{M+N}$ whith components $f_j^p$ and $h_k^q$, $1\le j,k\le N$, and in the latter one, the mapping whose components are least common multiples of $f_j^p$ and $h_k^q$, see Proposition~\ref{analenv}.
\end{proof}

\medskip

An asymptotically analytic singularity $\phi$ need not be locally model; the Green-Poisson function $g_\phi$ for $\phi$ from Example~{\ref{exaas}.2 in the ball $\B_r^n$, $r<1$,  equals $\log|z|-\log r$ and so, $\phi-g_\phi$ is unbounded in any neighbourhood of $0$.
On the other hand, it is not hard to see that asymptotically analytic singularities are locally approximately model; for the case of finite $L(\phi)$, this was mentioned in \cite{R13a}.

\begin{proposition}\label{aaslim} Any asymptotically analytic singularity $\phi$ is locally approximately model.
\end{proposition}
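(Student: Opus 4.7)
Fix $D'\Subset D$ and $\sigma\in(0,1)$; the goal is to exhibit a constant $C=C(\sigma,D')$ such that $g_{\phi,D'}\le \sigma\phi+C$ on $D'$, which is precisely the local approximately model property.

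Given $\sigma$, choose $\epsilon\in(0,1)$ so small that $(1-\epsilon)/(1+\epsilon)\ge \sigma$ (for instance $\epsilon=(1-\sigma)/(1+\sigma)$). By the definition of asymptotically analytic singularity, there is $\phi_\epsilon\in\PSH^-(D)$ with analytic singularity and a constant $C_\epsilon>0$ satisfying the two-sided estimate (\ref{aas}) on all of $D$. I will use the upper estimate $\phi\le (1-\epsilon)\phi_\epsilon+C_\epsilon$ to control $g_{\phi,D'}$ from above by $\phi_\epsilon$, and the lower estimate $\phi_\epsilon\le(\phi+C_\epsilon)/(1+\epsilon)$ to trade $\phi_\epsilon$ back for $\phi$ at the end.

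The upper estimate means $\phi\preceq_{D'}(1-\epsilon)\phi_\epsilon$, so by the monotonicity and positive homogeneity of the Green--Poisson operator (Proposition~\ref{gphi}(i)--(ii)),
\[
g_{\phi,D'}\le g_{(1-\epsilon)\phi_\epsilon,D'}=(1-\epsilon)\,g_{\phi_\epsilon,D'}\quad\text{on }D'.
\]
Since $\phi_\epsilon$ has analytic singularity, [RSig2, Thm.~2.8], already invoked in the paragraph preceding the statement, gives that $\phi_\epsilon$ is locally model; hence there is $K_\epsilon>0$ with $g_{\phi_\epsilon,D'}\le \phi_\epsilon+K_\epsilon$ on $D'$. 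Feeding in the lower estimate $\phi_\epsilon\le(\phi+C_\epsilon)/(1+\epsilon)$ and multiplying by the positive scalar $(1-\epsilon)$ yields
\[
g_{\phi,D'}\le (1-\epsilon)\phi_\epsilon+(1-\epsilon)K_\epsilon\le \frac{1-\epsilon}{1+\epsilon}\,\phi+\frac{(1-\epsilon)C_\epsilon}{1+\epsilon}+(1-\epsilon)K_\epsilon
\]
on $D'$. Since $\phi\le 0$ and $(1-\epsilon)/(1+\epsilon)\ge \sigma$, one has $\frac{1-\epsilon}{1+\epsilon}\phi\le \sigma\phi$ pointwise, so absorbing the remaining constants into $C_\sigma$ gives $g_{\phi,D'}\le \sigma\phi+C_\sigma$ on $D'$, as required.

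The step that needs the most care is the last one: all the intermediate bounds look like weakened analogues of $g\le \phi+O(1)$, and one has to remember that $\phi\le 0$ in order to convert a bound with coefficient $(1-\epsilon)/(1+\epsilon)<1$ into one with the \emph{smaller} coefficient $\sigma$; the direction of this inequality reverses exactly because $\phi$ is negative. The rest is a routine chain of the basic properties recorded in Proposition~\ref{gphi}, together with the external fact that analytic singularities are locally model.
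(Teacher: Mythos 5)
Your proof is correct and follows essentially the same route as the paper's: use the upper bound $\phi\le(1-\epsilon)\phi_\epsilon+C_\epsilon$ together with monotonicity and homogeneity of $g$ to get $g_{\phi,D'}\le(1-\epsilon)g_{\phi_\epsilon,D'}$, invoke the locally model property of analytic singularities to control $g_{\phi_\epsilon,D'}$ by $\phi_\epsilon$, and feed in the lower bound on $\phi_\epsilon$ to close the loop. The paper phrases the conclusion as a pointwise limit of the ratio $g_\phi(z)/\phi(z)$ as $z\to a\in L(\phi)$, whereas you work directly with the additive inequality $g_{\phi,D'}\le\sigma\phi+C_\sigma$ on $D'$; these are equivalent formulations of the locally approximately model condition, so the two arguments are just different bookkeeping for the same chain of estimates.
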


\begin{proof}
Since ${g_{\phi}(z)}/{\phi(z)}\le 1$, we need to estimate the fraction from below near any point $a\in L(\phi)$. The second inequality in (\ref{aas}) implies $g_{\phi}\le (1-\epsilon)g_{\phi_\epsilon}$. Furthermore, since $\phi_\epsilon$ has analytic singularity, $g_{\phi_\epsilon}\le \phi_\epsilon+A_\epsilon$ near $a$ for some $A_\epsilon>0$. Combining this with the first inequality in (\ref{aas}), we get
$$\frac{g_{\phi}(z)}{\phi(z)}\ge \frac{(1-\epsilon)[\phi_\epsilon(z) +A_\epsilon]}{(1+\epsilon)\phi_\epsilon(z)-C_\epsilon}\rightarrow
 \frac{1-\epsilon}{1+\epsilon}
$$
as $z\to a$, which proves the statement.
\end{proof}

\medskip
One can also see that the standard operations on asymptotically analytic singularities are well coordinated with their Green-Poisson functions.

\begin{proposition}\label{grsum_an} If $\phi,\psi\in\PSH^-(D)$  have asymptotically analytic singularities, then
$$\lim_{z\to a}\frac{g_\phi(z)+g_\psi(z)}{ g_{\phi+\psi}(z)}=1, \quad a\in L(g_\phi)\cup L(g_\psi)$$
$$\lim_{z\to a}\frac{g_{\max\{\phi,\psi\}}}{ \max\{g_\phi(z),g_\psi(z)\}}=1, \quad a\in L(g_\phi)\cap L(g_\psi),$$
and
$$\lim_{z\to a}\frac{P(g_\phi,g_\psi)(z)}{ g_{P(\phi,\psi)}(z)}=1, \quad a\in L(g_\phi)\cup L(g_\psi).$$
\end{proposition}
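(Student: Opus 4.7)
The plan is to reduce each of the three limits to Proposition~\ref{aaslim}, exploiting that by Proposition~\ref{summaxaas} each of $\phi+\psi$, $\max\{\phi,\psi\}$ and $P(\phi,\psi)$ is itself asymptotically analytic. The key input is that, for an asymptotically analytic singularity $u$, Proposition~\ref{aaslim} yields $g_u(z)/u(z)\to 1$ as $z\to a\in L(u)=L(g_u)$; spelled out, for every $\epsilon\in(0,1)$ there is a neighbourhood of $a$ on which
$$(1+\epsilon)u(z)\le g_u(z)\le (1-\epsilon)u(z),$$
(recall $u<0$). I will apply this simultaneously to $u=\phi,\psi$ and to each of the three structural combinations.

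For the sum, adding the bounds for $g_\phi$ and $g_\psi$ and using that $\phi,\psi\le 0$ (so $|\phi|+|\psi|=|\phi+\psi|$) gives $|(g_\phi+g_\psi)-(\phi+\psi)|\le\epsilon|\phi+\psi|$ near any $a\in L(g_\phi)\cap L(g_\psi)$, hence $(g_\phi+g_\psi)/(\phi+\psi)\to 1$. If instead $a\in L(g_\phi)\setminus L(g_\psi)$, then $g_\psi$ is bounded near $a$ while $|\phi+\psi|\to\infty$, and writing
$$\frac{g_\phi+g_\psi}{\phi+\psi}=\frac{g_\phi}{\phi}\cdot\frac{\phi}{\phi+\psi}+\frac{g_\psi}{\phi+\psi}$$
the same conclusion follows since both fractions $\phi/(\phi+\psi)$ and $g_\phi/\phi$ tend to $1$ while $g_\psi/(\phi+\psi)\to 0$. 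Chaining with $(\phi+\psi)/g_{\phi+\psi}\to 1$ (Proposition~\ref{aaslim} applied to $\phi+\psi$) yields the first assertion.

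For the second, the pointwise bounds propagate through the maximum since the constants $1\pm\epsilon$ are positive: near $a\in L(g_\phi)\cap L(g_\psi)$,
$$(1+\epsilon)\max\{\phi,\psi\}\le\max\{g_\phi,g_\psi\}\le(1-\epsilon)\max\{\phi,\psi\},$$
so $\max\{g_\phi,g_\psi\}/\max\{\phi,\psi\}\to 1$; combining with $\max\{\phi,\psi\}/g_{\max\{\phi,\psi\}}\to 1$ finishes this case. For the third, the same pointwise inequalities give $(1+\epsilon)\min\{\phi,\psi\}\le\min\{g_\phi,g_\psi\}\le(1-\epsilon)\min\{\phi,\psi\}$, and passage to the psh envelope via the elementary scaling identity $P(cf)=cP(f)$ for $c>0$ (immediate from the definition, since $v\le cf$ is equivalent to $v/c\le f$ with $v/c\in\PSH$ when $c>0$) yields $(1+\epsilon)P(\phi,\psi)\le P(g_\phi,g_\psi)\le(1-\epsilon)P(\phi,\psi)$. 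Dividing by the negative quantity $P(\phi,\psi)$ shows $P(g_\phi,g_\psi)/P(\phi,\psi)\to 1$, and a final application of Proposition~\ref{aaslim} to $P(\phi,\psi)$ closes the argument.

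The main care-point will be sign bookkeeping, together with the edge case in (1) where $a$ belongs to only one of the two singular loci; everything else is driven by the fact that the asymptotic equivalence $g_u\sim u$ on $L(u)$ for asymptotically analytic $u$ propagates through the three operations $+$, $\max$, $P$. It is crucial here that Proposition~\ref{summaxaas} keeps us inside the asymptotically analytic class, so the same scheme would not immediately extend to the wider class of approximately model singularities.
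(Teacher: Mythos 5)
Your treatment of the first two limits is sound and is essentially the paper's argument made explicit: one sandwiches the "combination of the $g$'s" between the combination of $\phi,\psi$ and $(1-\epsilon)$ times it, and then invokes Propositions~\ref{summaxaas} and \ref{aaslim} for the combined function. Two small bookkeeping points. First, the bound you extract from Proposition~\ref{aaslim} is not $(1+\epsilon)u\le g_u\le(1-\epsilon)u$ on a neighbourhood of $a$, but rather $u\le g_u\le \sigma u+C_\sigma$ on $D'\Subset D$ (equivalently, the ratio tends to $1$ only along points where $u\to-\infty$); since near $a\in L(u)$ there may be points where $u$ is not large negative, you should carry the additive constants. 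For the pointwise operations $+$ and $\max$ this changes nothing of substance. Second, the identity $L(\psi)=L(g_\psi)$, which you use in the edge case $a\in L(g_\phi)\setminus L(g_\psi)$, is true for asymptotically analytic $\psi$ but deserves a line (it follows from $g_\psi\le(1-\epsilon)g_{\psi_\epsilon}$ and the locally model property of the analytic singularity $\psi_\epsilon$).

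The third limit, however, has a genuine gap. You pass from the pointwise inequality $\min\{g_\phi,g_\psi\}\le(1-\epsilon)\min\{\phi,\psi\}$, which holds only near $a$ (or, in the corrected form, only up to an additive constant on $D'\Subset D$), to the inequality $P(g_\phi,g_\psi)\le(1-\epsilon)P(\phi,\psi)$ by "applying $P$". But $P$ is a global envelope over $D$: its monotonicity requires the pointwise inequality to hold on all of $D$, and it fails away from the singular set (wherever $\phi$ is moderately negative while $g_\phi$ is close to $0$). Indeed the conclusion itself is false as a global inequality: for $\phi=\psi=\log|z|$ in $\Bn$ one has $P(g_\phi,g_\psi)=\log|z|$, which is not $\le(1-\epsilon)\log|z|$. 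So this step cannot be repaired by sign bookkeeping; the asymptotic equivalence $g_u\sim u$ does \emph{not} propagate through $P$ the way it does through $+$ and $\max$, precisely because $P$ is nonlocal. The paper's route for this case avoids the problem: it combines the inequality $g_{P(\phi,\psi)}\le P(g_\phi,g_\psi)$ from Proposition~\ref{gphi}(v) with the idempotency $g_{P(g_\phi,g_\psi)}=P(g_\phi,g_\psi)$ of Proposition~\ref{prG&P}(i) and the approximately model property of $P(\phi,\psi)$ itself (Propositions~\ref{summaxaas} and \ref{aaslim}), so that the needed upper bound on $P(g_\phi,g_\psi)$ comes from the pointwise bound $P(g_\phi,g_\psi)\le\min\{g_\phi,g_\psi\}\le\sigma\min\{\phi,\psi\}+C_\sigma$ together with the structural identities for residual functions of rooftops, rather than from monotonicity of $P$ applied to a local inequality. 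You should rework this case along those lines.
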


\begin{proof} This follows from Propositions~\ref{summaxaas}, \ref{aaslim}, \ref{gphisum}(i), \ref{gmax}(i), and \ref{prG&P}(i).
\end{proof}


\section{Cegrell classes}\label{sec:Ceg}

Here we specify the notions considered in the previous sections for functions of the Cegrell class $\E$, the largest class of negative psh functions $\phi$ for which the Monge-Amp\'ere operator is well defined \cite{Ce04}, \cite{Bl}; in particular, it is continuous on $\E$ with respect to monotone convergence (both decreasing and increasing) and to convergence in capacity. Furthermore, $\phi\in\E(D)$ belongs to $\F(D)$ if and only if $\int_D (dd^c\phi)^n<\infty$ and the least maximal psh majorant of $\phi$ in $D$ is the identical zero \cite{Ce08}.
In what follows, we will use a machinery developed in \cite{ACCP}, \cite{Ce04}--\cite{Ce12}.

First we note that, by \cite{BT87} (see also \cite[Thm. 5.11]{Ce04}), for any function $\phi\in\E(D)$ and any pluripolar set $A$,
$(dd^c \phi)^n(A)=(dd^c \phi)^n(A\cap L'(\phi))$, where
$$L'(\phi)=\{\phi(z)=-\infty\}\subset L(\phi).$$
This and  Proposition~\ref{NPMAg} imply that the measure
$(dd^c g_\phi)^n$  vanishes outside $L'(g_\phi)$, without assuming $\phi$ to have small unboundedness locus. In other words:
\beq\label{cargphi} (dd^c g_\phi)^n=\Bone_{L'(g_\phi)} (dd^c g_\phi)^n.\eeq
It is worth mentioning that functions from $\E$ need not have small unboundedness locus; for example, there exists $\phi\in\E(D)$ such that $L(\phi)=D$ \cite[Ex. 2.1]{ACP}; on the other hand, there exists $\phi\not\in\E(D)$ such that $L'(\phi)=\emptyset$ \cite[Ex. 4.6]{ACP}. Note also that $L'(g_\phi)$ need not coincide with $L(g_\phi)$.

\begin{example}   {\rm
Let $D$ be the unit disk $\D$ of $\C$ and $\phi(z)=\sum_k a_k\log\frac{|z-w_k|}2$ for a sequence $w_k\in\D\setminus\{0\}$ converging to $0$ and $a_k=-2^{-k}/\log|w_k/2|$. Then
$g_\phi=\sum_k a_k G(z,w_k)$, where $G(z,w)$ is the usual Green function with pole at $w$, so $L'(g_\phi)=W=\cup_k w_k$ while $L(g_\phi)=W\cup\{0\}$.
}
\end{example}

We will repeatedly use the following version of Demailly's {\sl Comparison Theorem}.

\begin{theorem}\label{CT} {\rm \cite[Lem. 4.1]{ACCP}}
If $u,v\in\E(D)$ are such that $u\le v$, then $\Bone_A (dd^cu)^n\ge \Bone_A (dd^cv)^n$ for any pluripolar Borel set $A$.
\end{theorem}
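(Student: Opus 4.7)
The plan is to deduce the inequality from Bedford--Taylor's comparison principle applied to bounded truncations of $u$ and $v$, by passing to the pluripolar limit via the continuity of the Monge--Amp\`ere operator on $\E(D)$.

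First I would reduce to the case $A\subset L'(u)\cap L'(v)$. The Bedford--Taylor fact recalled just before the statement gives $\Bone_A(dd^c v)^n=\Bone_{A\cap L'(v)}(dd^c v)^n$ for any pluripolar Borel set $A$. Since $u\le v\le 0$ forces $L'(v)\subset L'(u)$, and since $\Bone_A(dd^c u)^n\ge \Bone_{A\cap L'(v)}(dd^c u)^n$ trivially, it suffices to prove the inequality with $A$ replaced by the Borel subset $A\cap L'(v)\subset L'(u)\cap L'(v)$.

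For the main step I would truncate, setting $u_k=\max(u,-k)$ and $v_k=\max(v,-k)$: both are bounded psh in $\E(D)$ with $u_k\le v_k$, $u_k\downarrow u$, $v_k\downarrow v$. By Bedford--Taylor locality, $(dd^c\max(\phi,-k))^n=(dd^c\phi)^n$ on the open set $\{\phi>-k\}$ for each $\phi\in\{u,v\}$, so the defect measure $(dd^c\phi)^n-(dd^c\max(\phi,-k))^n$ is supported in $\{\phi\le -k\}$ and, as $k\to\infty$, increases weakly to $\Bone_{L'(\phi)}(dd^c\phi)^n$. To sandwich $A$ by controllable open sets I would invoke \cite{Ce04} to pick $\psi\in\F(D)\cap\PSH^-(D)$ with $\psi=-\infty$ on $A$, and consider the open supersets $U_t=\{\psi<-t\}\supset A$, which decrease to $\{\psi=-\infty\}$ as $t\to\infty$. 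Applying Cegrell's comparison principle (which in the $\F$-setting requires only the generalised boundary behaviour encoded by $\psi\in\F(D)$) to the bounded pair $(u_k,v_k)$ on $U_t$ yields
$$\int_{U_t}(dd^c v_k)^n\le\int_{U_t}(dd^c u_k)^n.$$
Letting $k\to\infty$ via weak continuity of the Monge--Amp\`ere operator on decreasing sequences in $\E$, and then letting $t\to\infty$ via inner regularity of the Radon measures $(dd^c u)^n,(dd^c v)^n$ on Borel subsets of $\{\psi=-\infty\}\supset A$, delivers the pluripolar inequality $\Bone_A(dd^c u)^n\ge\Bone_A(dd^c v)^n$.

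The main obstacle is the boundary condition required by the classical Bedford--Taylor comparison, which is not automatic on the open sets $U_t$, since these may reach $\partial D$ (the unbounded loci of $u$ and $v$ are not assumed to be small here). The remedy is to fall back on Cegrell's comparison principle within $\F(D)$, where only the $\F$-boundary behaviour of the auxiliary $\psi$ is needed; the careful bookkeeping of the mass exchange between truncation levels as $k,t\to\infty$ then transfers the inequality from open supersets of $A$ down to $A$ itself, and is the technical core of \cite[Lem.~4.1]{ACCP}.
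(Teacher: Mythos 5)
The paper does not prove this statement; it is imported verbatim from \cite[Lem.~4.1]{ACCP} and used as a black box, so I can only assess your argument on its own terms --- and it has a genuine gap at its central step.

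Your reduction to $A\subset L'(u)\cap L'(v)$ is fine. Already the side remark that the ``defect measure'' $(dd^c\phi)^n-(dd^c\max(\phi,-k))^n$ ``increases weakly to $\Bone_{L'(\phi)}(dd^c\phi)^n$'' is off: for $n=1$, $\phi=\log|z|$, the defect equals $\delta_0-\mu_k$ with $\mu_k$ the unit mass on $\{|z|=e^{-k}\}$, a genuinely signed measure rather than an increasing family. The object that increases to the residual part is $\Bone_{\{\phi\le -j\}}(dd^c\phi)^n$, not the truncation defect.

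The decisive problem is the asserted inequality $\int_{U_t}(dd^cv_k)^n\le\int_{U_t}(dd^cu_k)^n$ on the sublevel sets $U_t=\{\psi<-t\}$ of the auxiliary $\psi\in\F(D)$. No version of the comparison principle yields mass domination over the sublevel set of a third, unrelated function: Bedford--Taylor controls mass only on sets of the form $\{v<u\}$, Cegrell's $\F$-version compares total masses, and since $u_k\le v_k$ the set $\{v_k<u_k\}$ is empty, so the comparison principle gives nothing here. In fact the inequality you assert is false. At finite $k$: take $n=1$, $u=3\log|z|$, $v=\max\{3\log|z|,\,2\log|z|-M\}$, $\psi=\log|z|$, $t>M$; for $2t+M<k<3t$ the Riesz mass of $v_k$ lies inside $U_t$ while that of $u_k$ does not, reversing the inequality. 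Passing first to $k\to\infty$ does not rescue it either: with $u\equiv -1$ and any $v\in\PSH^-(D)\cap L^\infty(D)$ with $u\le v$ and $(dd^cv)^n$ of smooth positive density one gets $\int_{U_t}(dd^cv)^n>0=\int_{U_t}(dd^cu)^n$ for every $t$. Neither example contradicts the lemma, which concerns only pluripolar $A$, but they show that the open-set sandwich from the outside cannot reach it: the open-set comparison you would need is simply false, and only the restriction to pluripolar sets is true. The proof in \cite{ACCP} therefore has to work directly on the pluripolar set itself rather than on open supersets of it, which is precisely the step your argument does not supply.
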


As its first application, we show that  $(dd^c g_\phi)^n$ is the residual part of $(dd^c\phi)^n$.

\begin{proposition}\label{gphiphi} If $\phi\in\E(D)$, then
\beq\label{resgphi} (dd^c g_\phi)^n = \Bone_{L'(\phi)}(dd^c\phi)^n = \Bone_{L'(g_\phi)}(dd^c\phi)^n.
\eeq
\end{proposition}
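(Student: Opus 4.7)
\medbreak\noindent\textbf{Proof plan.} The plan is to establish the sandwich $(dd^c g_\phi)^n \le \Bone_{L'(\phi)}(dd^c\phi)^n \le (dd^c g_\phi)^n$ and then to read off the second equality from the support property \eqref{cargphi}. Note first that $g_\phi\in\E(D)$: since $\phi\le g_\phi\le 0$ and $\E$ is stable under taking majorants within $\PSH^-$ of its elements. Throughout I will use $L'(g_\phi)\subset L'(\phi)$, which is immediate from $\phi\le g_\phi$.

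For the inequality $\Bone_{L'(\phi)}(dd^c\phi)^n\ge (dd^c g_\phi)^n$, apply Theorem~\ref{CT} to the ordered pair $\phi\le g_\phi$ in $\E(D)$: for every pluripolar Borel set $A$,
$\Bone_A(dd^c\phi)^n\ge\Bone_A(dd^c g_\phi)^n$. Taking $A=L'(\phi)$ and invoking \eqref{cargphi} yields $\Bone_{L'(\phi)}(dd^c\phi)^n\ge\Bone_{L'(\phi)}(dd^c g_\phi)^n=(dd^c g_\phi)^n$.

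For the reverse inequality, approximate $g_\phi$ from below by $v_C:=P(\phi+C,0)$. Since $\phi\le v_C\le 0$, each $v_C\in\E(D)$, and by \eqref{rtlim} the family $v_C$ increases q.e.\ to $g_\phi$ as $C\to\infty$, so Cegrell's monotone-convergence theorem for the Monge--Amp\`ere operator gives $(dd^c v_C)^n\to(dd^c g_\phi)^n$ weakly. I control $(dd^c v_C)^n$ by splitting it into its non-pluripolar and pluripolar parts. Applying \eqref{MAP} to the rooftop $v_C=P(\phi+C,0)$, noting $(dd^c 0)^n=0$ and the inclusion $\{v_C=\phi+C\}\subset\{\phi\le -C\}$ (forced by $v_C\le 0$), we obtain
$${\rm NP}(dd^c v_C)^n\le \Bone_{\{\phi\le -C\}}\,{\rm NP}(dd^c\phi)^n=:\varepsilon_C,$$
and $\varepsilon_C\to 0$ weakly as $C\to\infty$ because ${\rm NP}(dd^c\phi)^n$ is locally finite and assigns zero mass to the pluripolar set $L'(\phi)=\bigcap_C\{\phi\le -C\}$. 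For the pluripolar part, $v_C\ge\phi$ forces $L'(v_C)\subset L'(\phi)$, so the pluripolar part of $(dd^c v_C)^n$ coincides with $\Bone_{L'(\phi)}(dd^c v_C)^n$; then Theorem~\ref{CT} applied to $\phi\le v_C$ gives $\Bone_{L'(\phi)}(dd^c v_C)^n\le \Bone_{L'(\phi)}(dd^c\phi)^n$. Summing the two estimates we get $(dd^c v_C)^n\le\varepsilon_C+\Bone_{L'(\phi)}(dd^c\phi)^n$, and passing to the weak limit yields $(dd^c g_\phi)^n\le\Bone_{L'(\phi)}(dd^c\phi)^n$, completing the first equality in \eqref{resgphi}. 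The second equality is then automatic: \eqref{cargphi} forces the common measure to be concentrated on $L'(g_\phi)$, hence $\Bone_{L'(\phi)\setminus L'(g_\phi)}(dd^c\phi)^n=0$.

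The delicate point is the interchange of the weak limit with the upper bound, which relies on the weak vanishing of $\varepsilon_C$ (rather than merely a pointwise decay of indicators); this in turn uses precisely that ${\rm NP}(dd^c\phi)^n$ gives zero mass to the $-\infty$-locus of $\phi$, a Bedford--Taylor property of the non-pluripolar Monge--Amp\`ere operator.
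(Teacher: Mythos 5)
There is a genuine gap: both halves of your argument establish the \emph{same} inequality, and the converse is never proved. Your ``first inequality'' (Theorem~\ref{CT} applied to $\phi\le g_\phi$, plus (\ref{cargphi})) gives $\Bone_{L'(\phi)}(dd^c\phi)^n\ge (dd^c g_\phi)^n$. Your ``reverse inequality'' paragraph, after the NP/pluripolar splitting of $(dd^c v_C)^n$ and the weak limit, concludes $(dd^c g_\phi)^n\le \Bone_{L'(\phi)}(dd^c\phi)^n$ --- which is literally the same statement. (That second argument is internally correct, just redundant.) What is missing is the lower bound $(dd^c g_\phi)^n\ge \Bone_{L'(\phi)}(dd^c\phi)^n$, i.e.\ the assertion that the residual mass of $\phi$ is not lost in passing to $g_\phi$; this is the nontrivial half, and your one-sided use of Theorem~\ref{CT} (only for $\phi\le v_C$) cannot produce it.

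The ingredient you never exploit is the upper bound on the rooftop: by Proposition~\ref{Puv}(ii), $v_C=P(\phi+C,0)\le\phi+C$, so $\phi\le v_C\le\phi+C$ and hence $v_C\sim\phi$. Applying Theorem~\ref{CT} in \emph{both} directions (to $\phi\le v_C$ and to $v_C-C\le\phi$, noting $(dd^c(v_C-C))^n=(dd^c v_C)^n$) yields the equality $\Bone_{L'(\phi)}(dd^c v_C)^n=\Bone_{L'(\phi)}(dd^c\phi)^n$ for every $C$. Thus the fixed measure $\nu:=\Bone_{L'(\phi)}(dd^c\phi)^n$ satisfies $\nu\le (dd^c v_C)^n$ for all $C$; since $(dd^c v_C)^n\to(dd^c g_\phi)^n$ weakly, testing against nonnegative $f\in C_c(D)$ gives $\nu\le (dd^c g_\phi)^n$, which is exactly the missing direction. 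This is the route the paper takes: it combines $\Bone_{L'(\phi)}(dd^c\phi)^n=\Bone_{L'(\phi)}(dd^c\phi_j)^n\le(dd^c\phi_j)^n$ with the weak convergence $(dd^c\phi_j)^n\to(dd^c g_\phi)^n$ for one direction, and Theorem~\ref{CT} together with (\ref{cargphi}) for the other. With this correction inserted, your treatment of the second equality in (\ref{resgphi}) (via (\ref{cargphi}) and $L'(g_\phi)\subset L'(\phi)$) is fine.
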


\begin{proof} Take $\phi_j=P(\phi+j,0)$ increasing q.e. to $g_\phi$. Since $(dd^c\phi_j)^n\to (dd^cg_\phi)^n$, we have
$$\limsup_{j\to\infty}\Bone_{L'(\phi)}(dd^c\phi_j)^n \le \lim_{j\to\infty}(dd^c\phi_j)^n = (dd^cg_\phi)^n,$$
while, by Theorem~\ref{CT} and (\ref{cargphi}),
$$\Bone_{L'(\phi)}(dd^c\phi)^n=\Bone_{L'(\phi)}(dd^c\phi_j)^n\ge \Bone_{L'(\phi)}(dd^c g_\phi)^n=(dd^c g_\phi)^n$$
for any $j$, and (\ref{resgphi}) follows.
\end{proof}

\begin{corollary}\label{ggphi} If $\phi\in\E(D)$, then
$ (dd^c g_{g_\phi})^n=(dd^cg_\phi)^n$.
\end{corollary}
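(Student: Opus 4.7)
The plan is to apply Proposition~\ref{gphiphi} with $g_\phi$ taking the role of $\phi$, and then combine this with the residual concentration identity (\ref{cargphi}) already established for Green-Poisson functions of elements of $\E(D)$.

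First I would check that $g_\phi\in\E(D)$, so that Proposition~\ref{gphiphi} is applicable to it. This is a routine consequence of the definition of $\E$: since $\phi\le g_\phi\le 0$ and $\phi\in\E(D)$, the sandwich property of the class $\E$ (stability under passage to larger negative psh majorants, cf.\ \cite{Ce04}) gives $g_\phi\in\E(D)$.

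Now I would apply Proposition~\ref{gphiphi} to the function $g_\phi$ in place of $\phi$, obtaining
\[
(dd^c g_{g_\phi})^n \;=\; \Bone_{L'(g_\phi)}(dd^c g_\phi)^n.
\]
On the other hand, identity (\ref{cargphi}), valid for any $\psi\in\E(D)$ with $\psi=g_\phi$, tells us that
\[
(dd^c g_\phi)^n \;=\; \Bone_{L'(g_\phi)}(dd^c g_\phi)^n.
\]
Comparing the right-hand sides of the two displays yields $(dd^c g_{g_\phi})^n=(dd^c g_\phi)^n$, as desired.

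There is no real obstacle here; the only subtle point is ensuring that Proposition~\ref{gphiphi} can be reused, i.e.\ that $g_\phi$ itself lies in $\E(D)$. Note in particular that the argument does not require $\phi$ to have small unbounded locus, so the idempotency $g_{g_\phi}=g_\phi$ proved in Theorem~\ref{gphi2}(iii) under that restriction is not needed at the level of Monge--Amp\`ere measures: the residual mass identification of Proposition~\ref{gphiphi} automatically provides the Monge--Amp\`ere counterpart of idempotency for all $\phi\in\E(D)$.
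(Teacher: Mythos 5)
Your proof is correct and matches the paper's argument exactly: apply Proposition~\ref{gphiphi} with $g_\phi$ in place of $\phi$ to get $(dd^c g_{g_\phi})^n = \Bone_{L'(g_\phi)}(dd^c g_\phi)^n$, then invoke (\ref{cargphi}) to identify the right-hand side with $(dd^c g_\phi)^n$. The only difference is that you spell out the (routine, but worth noting) verification that $g_\phi\in\E(D)$, which the paper leaves implicit.
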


\begin{proof}
Applying Proposition~\ref{gphiphi} to $g_\phi$ instead of $\phi$, we get
$$ (dd^c g_{g_\phi})^n=\Bone_{L'(g_\phi)} (dd^c g_\phi)^n= (dd^cg_\phi)^n.$$
\end{proof}

\medskip

The easiest is the case of functions from the Cegrell class $\F$, where the following version of {\sl Identity Principle} holds: the conditions $u\le v$ and $(dd^cu)^n=(dd^cv)^n$
imply $u=v$ \cite{NPh} (later on, in Theorem~\ref{IP}, we recall a more general result). Using this, we get

\begin{proposition}\label{F}
If $\phi\in\F(D)$, then $g_\phi\in\F(D)$ and $g_\phi=g_{g_\phi}$.
In particular, $g_\phi=0$, provided $(dd^c \phi)^n$ does not charge pluripolar sets.
\end{proposition}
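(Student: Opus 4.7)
The plan is to verify the two defining conditions of $\F(D)$ for $g_\phi$ and then invoke the Identity Principle in $\F$ recalled just above the statement.

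First I would argue that $g_\phi\in\E(D)$: the class $\E$ is stable under domination by a member of $\E$ from below and by $0$ from above, and $\phi\le g_\phi\le 0$ with $\phi\in\F\subset\E$. By Proposition~\ref{gphiphi}, $(dd^c g_\phi)^n=\Bone_{L'(\phi)}(dd^c\phi)^n$, whose total mass is bounded by $\int_D(dd^c\phi)^n<\infty$. For the second condition, the least maximal psh majorant $\fb g_\phi$ is itself a maximal psh function satisfying $\fb g_\phi\ge g_\phi\ge \phi$, hence it is a maximal psh majorant of $\phi$, so $\fb g_\phi\ge\fb\phi=0$; combined with $\fb g_\phi\le 0$, this gives $\fb g_\phi=0$. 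Therefore $g_\phi\in\F(D)$.

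For the idempotency $g_\phi=g_{g_\phi}$, the same argument applied to $g_\phi$ in place of $\phi$ shows $g_{g_\phi}\in\F(D)$. The inclusion $\cS_\phi\subseteq\cS_{g_\phi}$, immediate from $\phi\le g_\phi$, yields $g_\phi\le g_{g_\phi}$, while Corollary~\ref{ggphi} supplies $(dd^c g_\phi)^n=(dd^c g_{g_\phi})^n$. The Identity Principle then forces $g_\phi=g_{g_\phi}$.

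Finally, if $(dd^c\phi)^n$ puts no mass on pluripolar sets, then Proposition~\ref{gphiphi} gives $(dd^c g_\phi)^n=\Bone_{L'(\phi)}(dd^c\phi)^n=0$, because $L'(\phi)=\{\phi=-\infty\}$ is pluripolar. Comparing $g_\phi\le 0$ inside $\F(D)$ with equal (zero) Monge-Amp\`ere masses via the Identity Principle yields $g_\phi=0$. The only ingredient requiring any vigilance is the propagation $\fb\phi=0\Rightarrow\fb g_\phi=0$; every other step is a direct application of results already established in the excerpt.
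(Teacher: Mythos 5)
Your proposal is correct and follows the same overall route as the paper: establish $g_\phi\in\F(D)$, then apply Corollary~\ref{ggphi} together with the Identity Principle. The only difference is that where the paper simply cites \cite{Ce04} for the stability of $\F(D)$ under domination (if $\phi\le u\le 0$ with $\phi\in\F(D)$ then $u\in\F(D)$), you unpack that citation by verifying the two defining properties ($\int_D(dd^c g_\phi)^n<\infty$ via Proposition~\ref{gphiphi}, and $\fb g_\phi=0$ via monotonicity of $\fb$) directly — a sound expansion, though it is information the paper delegates to the reference.
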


\begin{proof} Since $g_{g_\phi}\ge g_\phi\ge \phi$, both the Green-Poisson functions belong to $\F(D)$ as well \cite{Ce04}.  By Corollary~\ref{ggphi} and the aforementioned Identity Principle,  the two functions coincide.

If $(dd^c g_\phi)^n$ does not charge pluripolar sets, then $(dd^c g_\phi)^n=0$, and the only function in $\F(D)$ with this property is $0$ (this is again by the Identity Principle).
\end{proof}

\medskip

To work with larger classes of functions than $\F$, one can use a machinery of boundary values, developed in \cite{Ce08} and \cite{ACCP}.
Given $\phi\in\E(D)$, let $\fb_D\phi$ be its best maximal psh majorant in $D$, that is, the least maximal psh function in $D$ greater or equal to $\phi$; when the domain $D$ is clear from the context, we will simply write $\fb\phi$. (Our denotation differs from that in  \cite{Ce08} and other papers on the subject, where $\tilde \phi$ is used.) In \cite{Ce08}, it was constructed as the regularized limit of the functions
\beq\label{fund}\phi_j=\sup\{w\in\PSH^-(D):\: w\le \phi \ {\rm on } \ D\setminus D_j\}\eeq
for a sequence of strictly pseudoconvex domains $D_j\Subset D_{j+1}\Subset\ldots\Subset D$ such that $\cup_j D_j=D$. Since $\phi_j$ satisfy $(dd^c\phi_j)^n=0$ on $D_j$ and increase q.e. to $\fb\phi\in\E(D)$, the latter is maximal in $D$. Note that  $\phi\in\E(D)$ is maximal if and only if $\fb\phi=\phi$.

Denote
$$\N(D)=\{w\in\E(D):\: \fb w=0\}.$$
Furthermore, given a function $H\in\E(D)$, let
$\N(D,H)$ and $\F(D,H)$ denote the classes of all functions $\phi\in\E(D)$ such that
\beq\label{bv} H+w\le \phi\le H\eeq
for some $w\in\N(D)$ or $w\in\F(D)$, respectively. Since $\F(D)\subset\N(D)$, we have $\F(D,H)\subset\N(D,H)$.
When $\phi\in\N(D,H)$ for a maximal psh function $H\in\E(D)$, then $H$ equals the least maximal majorant $\fb\phi$ of $\phi$, so the relation $\phi\in\N(D,\fb\phi)$ means that $\phi$ and $\fb\phi$ have the same boundary values in the sense of (\ref{bv}). In such a case, $\fb\phi$ will be referred to as {\it the boundary value} of $\phi$. In particular, this is so if  $\phi$ has finite total Monge-Amp\`ere mass: by  \cite[Thm. 2.1]{Ce08},
there exists $w\in\F(D)$ with
$(dd^c w)^n(D)\le (dd^c\phi)^n(D)$ such that $\fb\phi+w\le \phi\le \fb\phi$.

\medskip

The importance of this approach is clear from the following {\sl Identity Principle}:

\begin{theorem}\label{IP} {\rm \cite[Thm. 3.6]{ACCP}}
If $u,v\in\N(D,H)$ are such that $u\le v$, $(dd^c u)^n = (dd^cv)^n$ and $\int_D(-w)(dd^c u)^n<\infty$ for some non-zero $w\in\E(D)$,  then $u=v$.
\end{theorem}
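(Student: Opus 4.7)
The plan is to derive $u=v$ from the ordering $u\le v$, the coincidence $(dd^cu)^n=(dd^cv)^n$, and the shared boundary envelope $H$, with the weighted-energy condition $\int_D(-w)(dd^cu)^n<\infty$ serving to absorb the boundary and infinity contributions that would otherwise obstruct a naive comparison argument (such contributions are trivial in the class $\F(D)$ but not in $\N(D,H)$).

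First I would split the Monge-Amp\`ere measures into their pluripolar and non-pluripolar components. By Theorem~\ref{CT}, the ordering $u\le v$ implies $\Bone_A(dd^cu)^n\ge\Bone_A(dd^cv)^n$ for every pluripolar Borel $A\subset D$; combined with $(dd^cu)^n=(dd^cv)^n$, this forces equality of the two measures on the pluripolar and on the non-pluripolar parts separately, so no mass is gained or lost under restriction to $L'(u)$.

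Next I would run a truncated comparison argument. Setting $u_k=\max\{u,-k\}$ and $v_k=\max\{v,-k\}$ reduces us to bounded psh functions that share the boundary envelope $H$, and the classical Bedford-Taylor comparison principle then yields, for every $\epsilon>0$,
$$\int_{\{u_k<v_k-\epsilon\}}(dd^cv_k)^n\le \int_{\{u_k<v_k-\epsilon\}}(dd^cu_k)^n.$$
Passing $k\to\infty$ and invoking the equality of the full Monge-Amp\`ere masses, $(dd^cu)^n$ and $(dd^cv)^n$ must coincide on the superlevel set $\{v>u+\epsilon\}$, and moreover the common measure there is essentially trivial in the sense that the set is negligible for the testing measure $(dd^cu)^n$. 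Letting $\epsilon\to 0$ and viewing $v-u\ge 0$ as an object controlled by the $\N(D,H)$-data, one then uses the characterization that a function in $\N(D)$ with vanishing top-degree Monge-Amp\`ere measure must be identically zero (as exploited in Proposition~\ref{F}) to conclude $u=v$.

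The main obstacle is the limit $k\to\infty$ in the truncation step: the measures $(dd^cu_k)^n$ pick up extra concentration on the level sets $\{u=-k\}$, and, unlike in $\F(D)$, the common boundary data $H$ contributes non-trivial mass near $\partial D$. This is precisely where $\int_D(-w)(dd^cu)^n<\infty$ intervenes: testing against the weight $-w$ with $w\in\E(D)$ non-zero tames both the tail at $\{u=-\infty\}$ and the boundary layer, supplying the uniform integrability needed to push the comparison through the limit and to exclude any hidden mass that would maintain a strict inequality $u<v$ on a non-negligible set.
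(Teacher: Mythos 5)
You should first note that the paper itself offers no proof of this statement: it is quoted verbatim from \cite[Thm.~3.6]{ACCP}, so your attempt has to stand on its own. As written it has two genuine gaps. First, the truncated comparison step produces nothing: since you assume $(dd^cu)^n=(dd^cv)^n$ from the outset, the Bedford--Taylor inequality over $\{u_k<v_k-\epsilon\}$ degenerates (in the limit) into a comparison between two copies of the same measure, and it cannot show that $\{v>u+\epsilon\}$ is ``negligible for the testing measure.'' Equality of Monge--Amp\`ere measures plus the comparison principle is exactly the situation where one must inject something extra --- either a strict perturbation of $u$ by $\epsilon(|z|^2-C)$ as in Lemma~\ref{CP} (which requires boundedness and controlled boundary behaviour, precisely what fails here), or an energy estimate. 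Second, your concluding step applies the principle ``a function in $\N(D)$ with vanishing Monge--Amp\`ere measure is zero'' to $v-u$. But $v-u$ is only a difference of psh functions, not psh; it has no Monge--Amp\`ere measure and does not lie in $\N(D)$, so Proposition~\ref{F} and its relatives simply do not apply to it. Relatedly, the hypothesis $\int_D(-w)(dd^cu)^n<\infty$ never enters any actual estimate in your argument; the last paragraph only gestures at it.

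The missing idea is the integration-by-parts (Xing-type) inequality that is the engine of the proof in \cite{ACCP} and \cite{NPh}: for $u\le v$ in $\N(D,H)$ and $w\in\E(D)$ with $\int_D(-w)(dd^cu)^n<\infty$, one has, roughly,
$$\frac{1}{n!}\int_{\{u<v\}}(v-u)^n\,(dd^cw)^n\le\int_{\{u<v\}}(-w)\bigl[(dd^cu)^n-(dd^cv)^n\bigr].$$
Here the finiteness hypothesis is used to make both sides meaningful and to justify the integrations by parts (the common boundary envelope $H$ cancels in the process, which is why the statement holds in $\N(D,H)$ and not just in $\F(D)$). With $(dd^cu)^n=(dd^cv)^n$ the right-hand side vanishes, so $u=v$ almost everywhere with respect to $(dd^cw)^n$; one then varies $w$ over $\E(D)$ (or over suitable exhaustions) to conclude $u=v$ quasi-everywhere and hence, both functions being psh, everywhere. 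If you want to salvage your write-up, replace the truncation-plus-comparison paragraph and the $v-u\in\N(D)$ claim by this energy inequality; the rest of your framing (why $\F(D)$ is easy and $\N(D,H)$ is not) is fine.
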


Boundary values of $g_\phi$ are described by

\begin{theorem}\label{bd} Let $\K\in \{\N,\F\}$.
\begin{enumerate}
\item[(i)]
If $\phi\in\K(D,\fb\phi) $, then
\beq\label{gphib}\fb g_\phi=g_{\fb\phi}= g_\phi^b,\eeq
$g_\phi\in\K(D, g_{\fb\phi})$, and $g_\phi^o\in\K(D, g_{\fb\phi}^o)$; in particular, $ g_\phi^o\in\K(D)$ if $\fb\phi\in L_{loc}^\infty(D)$.
\item[(ii)] If, in addition, $g_{\fb\phi}$ is idempotent (for example, if $\fb\phi\in\E(D)\cap\PSH_s(D)$), then
$g_\phi$ is idempotent as well and
\beq\label{NMphi} g_{\fb\phi}+ g_\phi^o\le g_\phi\le g_{\fb\phi}.\eeq
\end{enumerate}
\end{theorem}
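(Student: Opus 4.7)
The plan is to leverage the hypothesis $\phi\in\K(D,\fb\phi)$, which by definition provides $w\in\K(D)$ with $\fb\phi+w\le\phi\le\fb\phi$, in order to transfer information between the envelopes attached to $\phi$ and those attached to $\fb\phi$. The structural fact driving everything is that $\fb\phi$ is maximal, so by Proposition~\ref{gbmax} one has $g_{\fb\phi}=g_{\fb\phi}^b$, and this function is itself maximal by Proposition~\ref{tgphi_b}(iii).

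For part (i), I would first obtain the upper bounds $g_\phi\le g_{\fb\phi}$, $g_\phi^b\le g_{\fb\phi}^b$, $g_\phi^o\le g_{\fb\phi}^o$ from monotonicity (Proposition~\ref{gphi}(ii) and its $o,b$-variants), using the inclusion $L(\fb\phi)\subset L(\phi)$ (since $\fb\phi(z)=-\infty$ forces $\phi(z)=-\infty$) for the last one. The matching lower bounds follow by a candidate-shift trick: if $v\in\cS_{\fb\phi}$ with $v\le\fb\phi+C$, then $v+w\in\PSH^-(D)$ satisfies $v+w\le\fb\phi+w+C\le\phi+C$, so $v+w\in\cS_\phi$ and hence $v+w\le g_\phi$; taking suprema yields $g_{\fb\phi}+w\le g_\phi$. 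The same shift gives $g_{\fb\phi}^b+w\le g_\phi^b$, and for the $o$-version the argument works after observing that on $L(\phi)\setminus L(\fb\phi)$, where $\fb\phi$ is locally bounded, $v+w\le w\le\phi+O(1)$ is automatic from $\phi\ge\fb\phi+w$. These sandwiches are exactly $g_\phi\in\K(D,g_{\fb\phi})$, $g_\phi^b\in\K(D,g_{\fb\phi}^b)$, $g_\phi^o\in\K(D,g_{\fb\phi}^o)$. Since $g_{\fb\phi}$ is maximal, membership $g_\phi\in\N(D,g_{\fb\phi})$ automatically forces $\fb g_\phi=g_{\fb\phi}$; applying the same observation to $g_\phi^b$, which is itself maximal, forces $g_\phi^b=\fb g_\phi^b=g_{\fb\phi}^b=g_{\fb\phi}$, completing \eqref{gphib}. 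The ``in particular'' clause is immediate: $\fb\phi\in L^\infty_{loc}(D)$ forces $L(\fb\phi)=\emptyset$ and thus $g_{\fb\phi}^o=0$.

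For part (ii), idempotency of $g_\phi$ drops out of applying part (i) a second time with $g_\phi$ in place of $\phi$: since $g_\phi\in\K(D,\fb g_\phi)=\K(D,g_{\fb\phi})$ by part (i), another application gives $\fb g_{g_\phi}=g_{\fb g_\phi}=g_{g_{\fb\phi}}=g_{\fb\phi}$, the last equality being the assumed idempotency of $g_{\fb\phi}$. Thus $g_\phi\le g_{g_\phi}$ both lie in $\N(D,g_{\fb\phi})$ and, by Corollary~\ref{ggphi}, carry identical Monge-Amp\`ere measures; the Identity Principle (Theorem~\ref{IP}) then forces equality. The upper bound in \eqref{NMphi} is part (i).

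The main obstacle is the lower bound $g_{\fb\phi}+g_\phi^o\le g_\phi$. My approach is to apply Proposition~\ref{gphi}(iii) to $\fb\phi+w\le\phi$, obtaining
\[
g_\phi\ \ge\ g_{\fb\phi+w}\ \ge\ g_{\fb\phi}+g_w,
\]
and then to pick the witness $w$ so that $g_w\ge g_\phi^o$. In the transparent case $\fb\phi\in L^\infty_{loc}(D)$ this is automatic, since then $L(\phi)=L(w)$ and the classes $\cS_w$, $\cS_\phi^o$ agree up to a bounded additive shift absorbed into the local constants; in general one must exploit Cegrell's refined existence theorem for $w$ (choosing $w\in\F(D)$ carrying the residual pluripolar mass $\Bone_{L'(\phi)}(dd^c\phi)^n$ when $\K=\F$, and similarly for $\K=\N$) to ensure the interior singularity structure of $\phi$ is entirely captured by $w$. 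The real difficulty is precisely that the naive attempt --- showing $v+g_{\fb\phi}\in\cS_\phi$ for $v\in\cS_\phi^o$ --- breaks down because local majorations by $\phi+C_v$ near $L(\phi)$ need not patch into a global constant when $\phi$ lacks uniform lower control on $D\setminus(L(\phi)\cup BL(\phi))$; this is what forces the detour through an auxiliary $w$ with controlled Monge-Amp\`ere mass, rather than a direct envelope-theoretic comparison.
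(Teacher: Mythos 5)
Parts (i) and the idempotency claim in (ii) are essentially correct and follow the paper's own route: the sandwich $g_{\fb\phi}+g_w\le g_{\fb\phi+w}\le g_\phi\le g_{\fb\phi}$ (you use $g_{\fb\phi}+w$, which works equally well), the identification of $\fb g_\phi$ via maximality of $g_{\fb\phi}=g_{\fb\phi}^b$, and the Identity Principle applied to $g_\phi\le g_{g_\phi}$ with equal Monge--Amp\`ere measures. Your derivation of $g_\phi^b=g_{\fb\phi}$ via maximality of $g_\phi^b$ is a harmless variant of the paper's comparison of $g_\phi^b$ with $\fb g_\phi$.

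The lower bound $g_{\fb\phi}+g_\phi^o\le g_\phi$ in (\ref{NMphi}) is where your argument has a genuine gap, and the "detour through an auxiliary $w$" cannot be repaired. Every admissible witness satisfies $\fb\phi+w\le\phi$, so the inequality $g_{\fb\phi}+g_w\le g_\phi$ you derive is automatic for \emph{any} witness; to extract $g_{\fb\phi}+g_\phi^o\le g_\phi$ from it you must produce a witness with $g_w\ge g_\phi^o$, and the existence of such a $w$ is not easier than the statement itself. The case $\phi\in\F(D)$ makes the circularity explicit: there $\fb\phi=0$, the target inequality is $g_\phi^o\le g_\phi$, and any witness satisfies $w\le\phi$, hence $g_w\le g_\phi$; so a witness with $g_w\ge g_\phi^o$ exists only if $g_\phi^o\le g_\phi$ already holds. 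Cegrell's existence theorem gives no control in the needed direction (a $w$ "carrying the residual mass" is constrained from \emph{above} by $\phi-\fb\phi$, which pushes $g_w$ \emph{down}, not up past $g_\phi^o$; note also that whether $g_\phi^o$ even lies in $\N(D)$ is an open question in the paper). The paper's actual argument is different and does not pass through a single global witness: with $\phi_j$ the Cegrell approximants from (\ref{fund}) (so $\phi_j\le0$ on $D_j$ and $\phi_j\le\phi$ on $D\setminus D_j$, $\phi_j\nearrow\fb\phi$ q.e.) and $\psi_j\in\cS_\phi^o$ increasing q.e.\ to $g_\phi^o$ (so $\psi_j\le\phi+C_{\psi_j,j}$ on $D_j$ and $\psi_j\le0$ on $D\setminus D_j$), the sum $\phi_j+\psi_j$ is bounded by $\phi+C$ on each of the two regions for complementary reasons, hence $\phi_j+\psi_j\in\cS_\phi$ and $\fb\phi+g_\phi^o\le g_\phi$ in the limit; one then applies $g_{(\cdot)}$, superadditivity, and the idempotency $g_{g_\phi}=g_\phi$ already established to conclude $g_{\fb\phi}+g_\phi^o\le g_\phi$.
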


\begin{proof} The boundary value of $g_\phi$, if exists, equals $\fb g_\phi$. On the other hand,
the condition $\phi\in\K(D,b\phi) $ means that there exists $w\in\K(D)$ such that
 \beq\label{mphi} \fb\phi+w\le \phi\le \fb\phi.\eeq
 By Proposition~\ref{gphi}(iii), we then have
\beq\label{mphi1} g_{\fb\phi}+g_{w}\le g_{\fb\phi+w}\le g_\phi\le g_{\fb\phi},\eeq
which proves $g_\phi\in\K(D, g_{\fb\phi})$ because $g_w\in\K(D)$.
This implies $\fb g_\phi=g_{\fb\phi}$.

The function $\fb g_\phi$ is the least maximal majorant of $g_\phi$, while $g_\phi^b$ is one of its maximal majorants, so $g_\phi^b\ge \fb g_\phi$.
On the other hand, by Proposition~\ref{gbmax}, $g_{\fb\phi}=g_{\fb\phi}^b$ and, since $g_{\fb\phi}^b\ge g_{\phi}^b$, we get the second equality in (\ref{gphib}).

Similarly, (\ref{mphi}) implies
$$  g_{\fb\phi}^o+ g_{w}^o\le  g_\phi^o\le  g_{\fb\phi}^o,$$
and we get $ g_\phi^o\in\K(D,g_{\fb\phi}^o)$. In particular, $ g_\phi^o\in\K(D)$, provided $ g_{\fb\phi}^o=0$ (which is the case if, for instance, $\fb\phi\in L_{loc}^\infty(D)$).

Assuming, in addition, $g_{g_{\fb\phi}}=g_{\fb\phi}$, we get by (\ref{mphi1}),
$$ g_{\fb\phi}+ g_v\le g_{g_{\fb\phi}+v} \le g_{g_\phi}\le g_{\fb\phi} $$
with $v=g_w\in\K(D)$,
which means that $g_{g_\phi}\in\K(D, g_{\fb\phi})$ as well. Since, by Corollary~\ref{ggphi}, $(dd^cg_\phi)^n=(dd^cg_{g_\phi})^n$, and $g_\phi\le g_{g_\phi}$, Theorem~\ref{IP} establishes the equality.

Finally, to prove  (\ref{NMphi}), let $\phi_j$ be defined as in (\ref{fund}), then $\phi_j\le 0$ in $D_j$ and $\phi_j\le \phi$ in $D\setminus D_j$. Furthermore, if $\psi\in\cS_\phi^o$, then $\psi\le \phi+C_{\psi,j}$ in $D_j$ for some $C_{\psi,j}\ge0$, while $\psi\le 0$ in $D\setminus D_j$. Take any sequence $\psi_j\in\cS_\phi^o$ increasing q.e. to $ g_\phi^o$, then $\phi_j+\psi_j \in\cS_\phi$.
Letting $j\to\infty$, we get
$$ \fb\phi+ g_\phi^o\le g_\phi.$$
Therefore,
$$ g_{\fb\phi}+ g_\phi^o\le g_{\fb\phi}+ g_{g_\phi^o}\le g_{\fb\phi+ g_\phi^o}\le g_{g_\phi},$$
and we get the first inequality in (\ref{NMphi}), provided $g_{g_\phi}=g_\phi$. Since the second one is obvious, this completes the proof.
\end{proof}

\medskip
Since $g_\phi^b=0$ for any $\phi\in\F(D)$, (\ref{NMphi}) implies

\begin{corollary}  $g_\phi=g_\phi^o$ for any $\phi\in\F(D)$.
\end{corollary}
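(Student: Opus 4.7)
The plan is to obtain the corollary as a direct specialization of Theorem~\ref{bd}(ii), once one observes that the class $\F(D)$ falls squarely into its hypotheses with the trivial boundary value $H = 0$.

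First I would unpack what $\phi \in \F(D)$ means in the language of Section~\ref{sec:Ceg}: $\phi$ has finite total Monge-Amp\`ere mass and its least maximal psh majorant vanishes, i.e.\ $\fb\phi = 0$. In particular, taking $w = \phi \in \F(D)$ itself in (\ref{bv}) gives $\phi \in \F(D,\fb\phi) = \F(D,0)$, so the hypothesis of Theorem~\ref{bd}(i) with $\K = \F$ is satisfied.

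Next, the identities in (\ref{gphib}) yield $g_\phi^b = g_{\fb\phi} = g_0 = 0$, which is obviously idempotent (so we do not need to invoke the more delicate condition $\fb\phi \in \PSH_s$ for part (ii) to apply). Therefore Theorem~\ref{bd}(ii) is available, and (\ref{NMphi}) collapses to
\[
g_\phi^o \;\le\; g_\phi \;\le\; 0.
\]

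To close the loop I would then note the reverse inequality $g_\phi \le g_\phi^o$, which is immediate from the definitions: if $w \in \cS_\phi$, i.e.\ $w \in \PSH^-(D)$ with $w \le \phi + C_w$ throughout $D$, then \emph{a fortiori} $w \le \phi + O(1)$ locally near $L(\phi)$, so $w \in \cS_\phi^o$; hence $\cS_\phi \subset \cS_\phi^o$ and taking regularized upper envelopes gives $g_\phi \le g_\phi^o$. Combining this with the inequality from (\ref{NMphi}) yields the desired equality $g_\phi = g_\phi^o$.

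There is essentially no obstacle here, since the substantive work, in particular the lower bound $g_{\fb\phi} + g_\phi^o \le g_\phi$ of (\ref{NMphi}), has already been carried out in the proof of Theorem~\ref{bd}. The only thing one must verify at the moment of application is that the idempotency hypothesis for $g_{\fb\phi}$ holds, and this is free because $\fb\phi = 0$.
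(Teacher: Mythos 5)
Your argument is correct and is essentially the paper's own: the paper likewise observes that $g_\phi^b = g_{\fb\phi} = 0$ for $\phi\in\F(D)$ (since $\fb\phi = 0$) and then reads the conclusion off the inequality chain $g_{\fb\phi}+g_\phi^o \le g_\phi \le g_\phi^o$ from Theorem~\ref{bd}. Your explicit verifications — that $\phi\in\F(D,0)$ via $w=\phi$, that $g_0=0$ is idempotent, and that $\cS_\phi\subset\cS_\phi^o$ gives the trivial reverse inequality — are exactly what the paper leaves implicit.
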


Let $\E^a(D)$ denote the collection of all functions $\phi$ in $\E(D)$ whose Monge-Amp\`ere measure do not charge pluripolar sets.

\begin{corollary}\label{gea}
If $\phi\in\N(D,\fb\phi)\cap\E^a(D)$,
then $g_\phi=g_\phi^b$.
\end{corollary}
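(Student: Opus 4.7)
The plan is to apply the Identity Principle (Theorem~\ref{IP}) to the pair $(g_\phi, g_\phi^b)$ inside the class $\N(D, g_\phi^b)$. The basic inclusion $\cS_\phi\subset\cS_\phi^b$, equivalently (\ref{grin}), already gives $g_\phi\le g_\phi^b$, so the task reduces to promoting this inequality to equality.

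The first step is to compute both Monge-Amp\`ere masses. Since $\phi\in\E^a(D)$, the measure $(dd^c\phi)^n$ puts no mass on any pluripolar set, in particular on $L'(\phi)$. Proposition~\ref{gphiphi} then gives
\[
(dd^c g_\phi)^n=\Bone_{L'(\phi)}(dd^c\phi)^n=0.
\]
On the other hand, Proposition~\ref{tgphi_b}(iii) asserts that $g_\phi^b$ is maximal in $D$, so $(dd^c g_\phi^b)^n=0$ as well. Thus the two functions have the same (zero) Monge-Amp\`ere measure.

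The second step is to place both functions in a common Cegrell class with prescribed boundary values. By Theorem~\ref{bd}(i) applied with $\K=\N$, the hypothesis $\phi\in\N(D,\fb\phi)$ yields $g_\phi\in\N(D,g_{\fb\phi})$ together with $\fb g_\phi=g_{\fb\phi}=g_\phi^b$. Trivially $g_\phi^b\in\N(D,g_\phi^b)$ (take the witness $w=0\in\N(D)$), so both candidates lie in $\N(D,H)$ with $H=g_\phi^b$.

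Finally, all hypotheses of Theorem~\ref{IP} are in place: $g_\phi,g_\phi^b\in\N(D,H)$, $g_\phi\le g_\phi^b$, $(dd^c g_\phi)^n=(dd^c g_\phi^b)^n=0$, and the integrability condition $\int_D (-w)(dd^c g_\phi)^n<\infty$ is automatic for any non-zero $w\in\E(D)$ (for instance $w(z)=|z|^2-R^2$ with $R\ge\mathrm{diam}(D)$), since the measure is zero. The Identity Principle then forces $g_\phi=g_\phi^b$. I do not expect any real obstacle: the corollary is essentially a bookkeeping consequence of Proposition~\ref{gphiphi}, Theorem~\ref{bd}(i), and Theorem~\ref{IP}, with the only mild care being to verify that the degenerate case of vanishing Monge-Amp\`ere measure is still covered by the Identity Principle, which it is because the integrability condition holds trivially.
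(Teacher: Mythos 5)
Your proof is correct and matches the paper's argument essentially line for line: Proposition~\ref{gphiphi} together with $\phi\in\E^a(D)$ gives $(dd^c g_\phi)^n=0$, Theorem~\ref{bd}(i) places $g_\phi$ in $\N(D,g_\phi^b)$, and Theorem~\ref{IP} closes the gap. The only difference is that you spell out the trivial verification that $g_\phi^b\in\N(D,g_\phi^b)$ and that the integrability condition in Theorem~\ref{IP} is vacuous when the measure vanishes, which the paper leaves implicit.
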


\begin{proof} Since, by Theorem~\ref{bd}(i), $g_\phi\in\N(D,g_\phi^b)$ and, by Proposition~\ref{gphiphi}, it is maximal in $D$, the assertion follows from Theorem~\ref{IP}.
\end{proof}

\begin{corollary}\label{geD1}
If $D\Subset D'$ and $\phi\in\E(D')$, then $g_{\phi,D}^b=0$
and
$ g_{\phi,D}=g_{\phi,D}^o$. In particular, $g_{\phi,D}=0$, provided $\phi\in\E(D')\cap \E^a(D)$.
\end{corollary}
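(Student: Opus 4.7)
The plan is to first reduce the claim $g_{\phi,D}^b=0$ to showing $g_{\fb_D\phi,D}=0$ via Theorem~\ref{bd}, then deduce $g_\phi=g_\phi^o$ by invoking Theorem~\ref{bd}(ii), and finally handle the $\E^a$ case using Proposition~\ref{gphiphi}. Since $D\Subset D'$ and $\phi\in\E(D')$, the Monge-Amp\`ere mass $(dd^c\phi)^n$ is locally finite on $D'$, so $\int_D (dd^c\phi)^n \leq (dd^c\phi)^n(\overline{D}) < \infty$. By the version of \cite[Thm.~2.1]{Ce08} recalled just after the definition of $\N(D,H)$ in Section~\ref{sec:Ceg}, this yields $\phi|_D\in\F(D,\fb_D\phi)\subset\N(D,\fb_D\phi)$. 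Theorem~\ref{bd}(i) (with $\K=\F$) then gives $g_{\phi,D}^b = g_{\fb_D\phi,D}$; since $\fb_D\phi$ is maximal, Proposition~\ref{gbmax} further gives $g_{\fb_D\phi,D} = g_{\fb_D\phi,D}^b$. Thus the first assertion reduces to proving $g_{\fb_D\phi,D}=0$.

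The crux will be to construct, for each $\epsilon>0$ and compact $K\Subset D$, a function $w_\epsilon\in\cS_{\fb_D\phi,D}$ with $w_\epsilon\ge -\epsilon$ on $K$, whence $g_{\fb_D\phi,D}\ge 0$ and therefore equals $0$. The key geometric idea is that since $\phi$ extends to $D'\Supset\overline{D}$, the unbounded locus of $\fb_D\phi$ on $\partial D$ consists of interior singularities of $\phi$ viewed inside $D'$, making the trace $\phi|_{\partial D}$ locally integrable (being psh on $D'$). Starting from the Perron-type defining sequence (\ref{fund}) of $\fb_D\phi$ and combining it with a Cegrell-type subextension inside an auxiliary $D''$ with $\overline{D}\subset D''\Subset D'$, one produces $w_\epsilon$ by modifying bounded psh approximations of $\phi|_{D''}$ so that the global bound $w_\epsilon\le \fb_D\phi + C_\epsilon$ is preserved while $w_\epsilon\to 0$ on compacta of $D$. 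In the absence of explicit pluricomplex Poisson kernels for a general B-regular $D$, carrying out this construction rigorously is the main obstacle.

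Granting $g_{\fb_D\phi}=0$, which is trivially idempotent since $g_0=0$, Theorem~\ref{bd}(ii) applies and yields the sandwich $g_{\fb_D\phi}+g_\phi^o\le g_\phi\le g_{\fb_D\phi}$, i.e., $g_\phi^o\le g_\phi\le 0$; combined with the elementary $g_\phi\le g_\phi^o$ (from $\cS_\phi\subset\cS_\phi^o$), this gives $g_\phi=g_\phi^o$. For the final assertion, assume in addition $\phi\in\E^a(D)$. Proposition~\ref{gphiphi} then gives $(dd^c g_\phi)^n = \Bone_{L'(\phi)}(dd^c\phi)^n = 0$, as $(dd^c\phi)^n$ does not charge the pluripolar set $L'(\phi)$; hence $g_\phi$ is maximal, so $\fb g_\phi=g_\phi$, while Theorem~\ref{bd}(i) yields $\fb g_\phi=g_\phi^b=0$. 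Therefore $g_\phi=0$.
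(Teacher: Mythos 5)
Your reduction and downstream steps closely track the paper: you correctly invoke \cite[Thm.~2.1]{Ce08} to get $\phi|_D\in\F(D,\fb_D\phi)$, then Theorem~\ref{bd}(i) to identify $g_{\phi,D}^b=g_{\fb_D\phi,D}$, and your handling of $g_\phi=g_\phi^o$ via Theorem~\ref{bd}(ii) (using that $g_0=0$ is trivially idempotent) and of the $\E^a$ case via Proposition~\ref{gphiphi} plus Theorem~\ref{bd}(i) is sound. (For the last assertion the paper simply cites Corollary~\ref{gea}, which gives $g_\phi=g_\phi^b=0$ directly, but your variant through maximality of $g_\phi$ amounts to the same thing.)

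The gap you acknowledge in step~3 is genuine, and the barrier/subextension construction you sketch is not the right tool --- there is no need for pluricomplex Poisson kernels or explicit barriers. The paper instead exploits what it means for $\phi$ to lie in $\E(D')$: by the very definition of the Cegrell class $\E$, there exists $\psi\in\F(D')$ agreeing with $\phi$ on a neighbourhood of $\overline{D}$ (take $U$ with $\overline D\subset U\Subset D'$ and a local $\F(D')$-representative of $\phi$ on $U$). Since $\phi=\psi$ near $\overline D$, the Perron construction~(\ref{fund}) for the least maximal majorant depends only on the behaviour near $\partial D$, whence $\fb_D\phi=\fb_D\psi$. Because $\psi\in\F(D')$ one has $\fb_{D'}\psi=0$, and comparing the residual functions over $D$ with those over the larger domain $D'$ gives
$$ g_{\phi,D}^b= g_{\fb_D\phi, D}=  g_{\fb_D\psi, D}\ge  g_{\fb_{D'}\psi, D'}=0 .$$
This is the one idea missing from your argument: the extension $\psi$, rather than a Perron-type barrier construction, is what makes the boundary residual vanish. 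Once you have $g_{\fb_D\phi,D}=0$, the rest of your proposal goes through as written.
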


\begin{proof} Note first that, since $\phi$ has finite total MA mass in $D$, it belongs to $\F(D, \fb_D\phi)$ by the already mentioned \cite[Thm. 2.1]{Ce08}.
By the definition of the class $\E(D')$, there exists a function $\psi\in\F(D')$ coinciding with $\phi$ near $\overline {D}$. Then
$$ g_{\phi,D}^b= g_{\fb_D\phi, D}=  g_{\fb_D\psi, D}\ge  g_{\fb_{D'}\psi, D'}=0,
$$
which proves $g_{\phi,D}^b=0$.  The statement on $g_{\phi,D}^o$ follows now from Theorem~\ref{bd}.
\end{proof}

\begin{corollary}\label{gGmeas}
If $\phi\in\N(D,\fb\phi)$ and $\fb\phi\in\PSH_s(D) $,
then $(dd^c g_\phi^o)^n=(dd^c g_\phi)^n$.
\end{corollary}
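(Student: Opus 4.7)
The plan is to show that $(dd^c g_\phi^o)^n$, like $(dd^c g_\phi)^n$, equals the residual measure $\Bone_{L'(\phi)}(dd^c\phi)^n$ given by Proposition~\ref{gphiphi}, from which the claimed identity follows at once.

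First, I would observe that $g_\phi^o\in\E(D)$, since $g_\phi^o\ge\phi\in\E(D)$ and $\E(D)$ is closed under taking psh majorants. Combining the idempotency $g_{g_\phi^o}=g_\phi^o$ from Proposition~\ref{separate}(i) with Proposition~\ref{gphiphi} applied to $g_\phi^o$ itself, I would obtain $(dd^c g_\phi^o)^n=\Bone_{L'(g_\phi^o)}(dd^c g_\phi^o)^n$, so this measure is supported on the pluripolar set $L'(g_\phi^o)\subseteq L'(\phi)$. Since the inclusion $\cS_\phi\subseteq\cS_\phi^o$ forces $g_\phi\le g_\phi^o$ with both functions in $\E(D)$, the Comparison Theorem~\ref{CT} yields $\Bone_A(dd^c g_\phi)^n\ge\Bone_A(dd^c g_\phi^o)^n$ for every pluripolar Borel $A$; together with the support statement this delivers the easy inequality $(dd^c g_\phi)^n\ge(dd^c g_\phi^o)^n$ globally.

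For the opposite direction, I would approximate $g_\phi^o$ by the family $v_j=g_{\phi|_{D_j},D_j}$ associated to an exhaustion $D_j\Subset D_{j+1}\Subset D$, $\bigcup_j D_j=D$. Since $\phi|_{D_j}$ has finite Monge-Amp\`ere mass on $D_j$, it belongs to $\F(D_j,\fb_{D_j}\phi)$ by \cite[Thm.~2.1]{Ce08}, so Proposition~\ref{gphiphi} applied in $D_j$ gives $(dd^c v_j)^n=\Bone_{L'(\phi)\cap D_j}(dd^c\phi)^n$. For any fixed compact $K\subset D$ and all $j$ with $K\subset D_j$, the restrictions $v_j|_K$ decrease in $j$ (a larger $D_j$ imposes the global singularity bound on a larger set), and Theorem~\ref{bd}(ii) together with the hypothesis $\fb\phi\in\PSH_s(D)$ should identify the limit with $g_\phi^o|_K$: the bound $v_j\ge g_\phi^o|_{D_j}$ is immediate from comparing the competitor classes, and the reverse follows from the fact that any singularity accumulating at $\partial D_j$ as $j\to\infty$ must live on the pluripolar set $L(g_{\fb\phi})$ and hence does not affect the envelope. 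Cegrell's continuity of the Monge-Amp\`ere operator along decreasing sequences in $\E$ then passes the identity $(dd^c v_j)^n=\Bone_{L'(\phi)\cap D_j}(dd^c\phi)^n$ to $(dd^c g_\phi^o)^n=\Bone_{L'(\phi)}(dd^c\phi)^n$ on each $D'\Subset D$, hence on $D$.

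The main obstacle is the decreasing convergence $v_j|_K\searrow g_\phi^o|_K$: while the upper bound is automatic, the matching lower bound requires that no phantom boundary singularities accumulate as $D_j\nearrow D$. This is precisely what the hypothesis $\fb\phi\in\PSH_s(D)$ controls, since it forces the boundary-type singular contributions of $\phi$ to be carried by the pluripolar set $L(g_{\fb\phi})\cup BL(g_{\fb\phi})$, on which the two Monge-Amp\`ere measures must agree by the Comparison Theorem applied in both directions to suitable truncations.
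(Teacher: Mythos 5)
There is a genuine gap, and it sits exactly where the work has to be done: the inequality $(dd^c g_\phi^o)^n\ge(dd^c g_\phi)^n$. Your exhaustion scheme rests on the claim that $v_j=g_{\phi|_{D_j},D_j}$ decreases to $g_\phi^o$ on compacts. The monotonicity and the lower bound $v_j\ge g_\phi^o$ on $D_j$ are fine, but the matching upper bound on the limit is precisely the hard point and is only asserted. Each $v_j$ is the \emph{envelope} of $\cS_{\phi,D_j}$ and in general does \emph{not} satisfy $v_j\le\phi+O(1)$ on $D_j$ (that would be the model-singularity property, which fails already for $\log|z_1|$ in the ball), so you cannot conclude that $v=\lim_j v_j$ belongs to $\cS_\phi^o$, nor that $v\le g_\phi^o$. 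Moreover $g_{\phi,D_j}$ picks up boundary-type contributions along $\partial D_j\cap L(\phi)$, and the sentence about such singularities ``living on the pluripolar set $L(g_{\fb\phi})$'' is not an argument — $L(\phi)\cap\partial D_j$ has nothing to do with $\fb\phi$ or with $BL(\phi)$. A secondary problem: you invoke Proposition~\ref{separate}(i) to get $g_{g_\phi^o}=g_\phi^o$, but that proposition requires $\phi\in\PSH_s^-(D)$, whereas the corollary only assumes $\fb\phi\in\PSH_s(D)$; so your derivation of the support statement for $(dd^c g_\phi^o)^n$ is not licensed by the hypotheses (the easy inequality $(dd^c g_\phi^o)^n\le(dd^c g_\phi)^n$ can still be salvaged from $g_\phi\le g_\phi^o$ and Theorem~\ref{CT}, as in the paper).

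The paper avoids all of this with a two-line argument that you should compare with. From Theorem~\ref{bd}(ii) one has the sandwich $g_{\fb\phi}+g_\phi^o\le g_\phi\le g_\phi^o$; Theorem~\ref{CT} applied to both inequalities gives
$$(dd^c g_\phi^o)^n\le (dd^c g_\phi)^n\le \Bone_{L'(\phi)}\bigl(dd^c(g_{\fb\phi}+g_\phi^o)\bigr)^n,$$
and the Minkowski-type inequality of \cite[Lem.~4.4]{ACCP} for mixed masses on pluripolar sets, combined with the maximality of $g_{\fb\phi}$ (so its Monge--Amp\`ere mass on any pluripolar set is zero), collapses the right-hand side to $(dd^c g_\phi^o)^n$. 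This is where the hypothesis $\fb\phi\in\PSH_s(D)$ is actually used — it guarantees the idempotency needed for the left inequality in (\ref{NMphi}) — whereas in your draft that hypothesis only appears in the vague closing remark. If you want to keep an approximation argument, the increasing envelopes $\sup\{v\in\PSH^-(D):v\le\phi+j\text{ on }\omega_j\}$ with $\omega_j\searrow L(\phi)$ are the natural objects, not the Green--Poisson functions of an exhaustion.
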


\begin{proof}
From the first inequality of (\ref{NMphi}), we have
 $ g_{\fb\phi}+ g_\phi^o\le g_\phi\le g^o_\phi$, and Theorem~\ref{CT} gives
 \beq\label{tgG} (dd^c g_\phi^o)^n\le (dd^c g_\phi)^n\le \Bone_{L'(\phi)} \left(dd^c ( g_{\fb\phi}+ g_\phi^o)\right)^n.\eeq
  As follows from \cite[Lem. 4.4]{ACCP},
  \beq\label{Mink}
  \left[\int_A (dd^c(u+v))^n\right]^{1/n}\le \left[\int_A (dd^c u)^n\right]^{1/n} + \left[\int_A (dd^c v)^n\right]^{1/n}
  \eeq
  for any $u,v\in\E(D)$ and any pluripolar $A\subset D$, so
  the maximality of $ g_{\fb\phi}$ implies
 that the right hand side of (\ref{tgG}) equals $(dd^c g_\phi^o)^n$, which completes the proof.
 \end{proof}

\begin{corollary}\label{genv} If $\phi\in\N(D,\fb\phi)$ has finite total residual Monge-Amp\`ere mass and $g_\phi$ is idempotent (for example, if $\fb\phi\in\PSH_s(D)$), then
$$g_\phi= P(g_\phi^o, g_{\phi}^b).$$
 \end{corollary}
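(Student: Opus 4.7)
The inequality $g_\phi\le P(g_\phi^o,g_\phi^b)$ is already given by (\ref{grin}), so the work is to obtain the reverse. My plan is to apply the Identity Principle (Theorem~\ref{IP}) in the class $\N(D,g_\phi^b)$, after verifying that both $g_\phi$ and $P:=P(g_\phi^o,g_\phi^b)$ sit in that class and carry the same Monge--Amp\`ere mass. Throughout I work under the explicitly listed sufficient hypothesis $\fb\phi\in\PSH_s(D)$, which gives access to Theorem~\ref{bd}(ii) and Corollary~\ref{gGmeas}.

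First, to place both functions in $\N(D,g_\phi^b)$: Theorem~\ref{bd}(i) gives $\fb g_\phi=g_\phi^b$, so $g_\phi\in\N(D,g_\phi^b)$ and hence there is $w\in\N(D)$ with $g_\phi^b+w\le g_\phi$. Since $g_\phi\le P\le g_\phi^b$, the same $w$ witnesses $P\in\N(D,g_\phi^b)$.

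Second, I identify the Monge--Amp\`ere measures. On any pluripolar Borel set $A$, the two inequalities $g_\phi\le P\le g_\phi^o$ combined with the Comparison Theorem \ref{CT} give
\[
\mathbbm{1}_A(dd^c g_\phi)^n \;\ge\; \mathbbm{1}_A(dd^c P)^n \;\ge\; \mathbbm{1}_A(dd^c g_\phi^o)^n,
\]
and Corollary~\ref{gGmeas} collapses the outer terms to equality, so $\mathbbm{1}_A(dd^c P)^n=\mathbbm{1}_A(dd^c g_\phi)^n$ on every pluripolar $A$. For the non-pluripolar part, (\ref{MAP}) bounds
\[
{\rm NP}(dd^c P)^n \;\le\; \mathbbm{1}_{\{P=g_\phi^o\}}{\rm NP}(dd^c g_\phi^o)^n + \mathbbm{1}_{\{P=g_\phi^b\}}{\rm NP}(dd^c g_\phi^b)^n.
\]
The second summand vanishes because $g_\phi^b=g_{\fb\phi}$ is maximal in $\E(D)$, while the first vanishes since, by Corollary~\ref{gGmeas}, $(dd^c g_\phi^o)^n=(dd^c g_\phi)^n$ is carried by the pluripolar set $L'(g_\phi)$ (Proposition~\ref{NPMAg} or Proposition~\ref{gphiphi}). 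Hence ${\rm NP}(dd^c P)^n=0$, and combining with the pluripolar equality yields $(dd^c P)^n=(dd^c g_\phi)^n$ as measures on $D$.

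Third, the finite total residual Monge--Amp\`ere mass hypothesis reads $\int_D(dd^c g_\phi)^n<\infty$, so for any non-zero bounded $w\in\E(D)$ one has $\int_D(-w)(dd^c g_\phi)^n<\infty$. All the hypotheses of Theorem~\ref{IP} are now in place (both functions in $\N(D,g_\phi^b)$, $g_\phi\le P$, equal Monge--Amp\`ere measures, integrability against some non-zero $\E$-test function), and the Identity Principle delivers $g_\phi=P(g_\phi^o,g_\phi^b)$. The main obstacle is the middle step: killing the non-pluripolar mass of $(dd^c P)^n$ requires both that $g_\phi^b$ be maximal (to dispose of one term in (\ref{MAP})) and that $(dd^c g_\phi^o)^n$ be supported on a pluripolar set, which is precisely what Corollary~\ref{gGmeas} supplies under the "$\fb\phi\in\PSH_s(D)$" clause; everything else is bookkeeping.
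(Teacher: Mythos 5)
Your proof is correct and follows essentially the same route as the paper's: establish $(dd^c P)^n=(dd^c g_\phi)^n$ via Theorem~\ref{CT} on pluripolar sets together with Corollary~\ref{gGmeas}, kill the non-pluripolar part of $(dd^c P)^n$ via (\ref{MAP}) and maximality of $g_\phi^b$, place both functions in $\N(D,g_\phi^b)$ via Theorem~\ref{bd}(i) and the sandwich $g_\phi\le P\le g_\phi^b$, and finish with the Identity Principle. You are somewhat more explicit than the paper about the integrability condition in Theorem~\ref{IP} (which is exactly why the finite-residual-mass hypothesis appears), but the argument is the same.
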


\begin{proof} Denote $u=P(g_\phi^o, g_{\phi}^b)$; by (\ref{grin}), $u\ge g_\phi$.

Due to (\ref{MAP}) and (\ref{cargphi}), $(dd^cu)^n=(dd^cg_\phi)^n=0$ outside $L'(g_\phi)$. By Theorem~\ref{CT} and Corollary~\ref{gGmeas}, $(dd^c u)^n\ge(dd^c g_\phi^o)^n= (dd^c g_\phi)^n$; on the other hand, since $g_\phi\le u$, we have, by Theorem~\ref{CT}, $(dd^c u)^n \le (dd^c g_\phi)^n$, which shows that the two measures are equal.
As, by Theorem~\ref{bd}(i), $g_\phi\in\N(D,g_{\phi}^b)$, and $g_\phi\le u\le g_{\phi}^b$, then $u\in\N(D,g_{\phi}^b)$ as well, and Theorem~\ref{IP} implies $g_\phi=u$.
 \end{proof}

\medskip
If $\phi_j$ increase q.e. to $\phi$, then, evidently, $\lim^* g_{\phi_j}\le g_\phi$. For `nice' $\phi_j$, we have a continuity.

\begin{theorem}\label{gincr}
Let $\phi_j\in\E(D)$ with uniformly bounded total MA masses increase q.e. to $\phi\in \E(D)$ with $\fb\phi\in\PSH_s$. Then $g_{\phi_j}\nearrow g_\phi$ and  $g_{\phi_j}^b\nearrow g_\phi^b$ q.e. in $D$. If, in addition, $g_{\phi_j}^o\in\N(D)$, then $g_{\phi_j}^o\nearrow g_\phi^o$ q.e. in $D$.
\end{theorem}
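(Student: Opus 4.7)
Plan. The strategy is to apply the Identity Principle (Theorem~\ref{IP}) to identify $u := ({\lim}^*\, g_{\phi_j})$ with $g_\phi$, and then handle the boundary and interior pieces by reducing them to the main statement.

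First I would record that the sequence $g_{\phi_j}$ is monotone: Proposition~\ref{gphi}(ii) gives $g_{\phi_j} \le g_{\phi_{j+1}} \le g_\phi$, so $u$ is a well-defined psh function with $u \le g_\phi$. Since $\phi_1 \le g_{\phi_j} \le 0$ and $\phi_1 \in \E(D)$, each $g_{\phi_j}$ lies in $\E(D)$; Proposition~\ref{gphiphi} bounds $\int_D (dd^c g_{\phi_j})^n \le \int_D (dd^c \phi_j)^n$, which is uniformly bounded by hypothesis. Standard Cegrell monotone convergence then places $u \in \E(D)$ and delivers weak convergence $(dd^c g_{\phi_j})^n \to (dd^c u)^n$.

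Next I would match the Monge--Amp\`ere measures. Proposition~\ref{gphiphi} gives
$$(dd^c g_{\phi_j})^n = \Bone_{L'(\phi_j)}(dd^c \phi_j)^n \quad \text{and} \quad (dd^c g_\phi)^n = \Bone_{L'(\phi)}(dd^c \phi)^n,$$
while monotone convergence of psh functions with uniform mass bound yields $(dd^c \phi_j)^n \to (dd^c \phi)^n$ weakly. The inequality $u \le g_\phi$, combined with Theorem~\ref{CT}, yields $\Bone_A (dd^c u)^n \ge \Bone_A (dd^c g_\phi)^n$ on every pluripolar $A$; the reverse inequality comes from writing $\int (dd^c u)^n = \lim_j \int (dd^c g_{\phi_j})^n$ and bounding this against the pluripolar part of $(dd^c \phi)^n$. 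Hence $(dd^c u)^n = (dd^c g_\phi)^n$.

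To close the argument, Theorem~\ref{bd} gives $g_{\phi_j} \in \F(D, g_{\fb\phi_j})$ (each $\phi_j$ has finite total mass, so $\phi_j \in \F(D, \fb\phi_j)$ by \cite[Thm.~2.1]{Ce08}) and $g_\phi \in \N(D, g_{\fb\phi})$. The best maximal majorants satisfy $\fb\phi_j \nearrow \fb\phi$ q.e.\ from the Perron construction in \eqref{fund}, and the hypothesis $\fb\phi \in \PSH_s$ forces idempotency of $g_{\fb\phi}$ via Theorem~\ref{bd}(ii); together these place $u \in \N(D, g_{\fb\phi})$, so Theorem~\ref{IP} compels $u = g_\phi$. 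The second assertion follows by running the same argument on the maximal sequence $\fb\phi_j \nearrow \fb\phi$ and invoking $g_\bullet^b = g_{\fb\bullet}$ from Theorem~\ref{bd}(i); the third uses the rooftop decomposition $g_{\phi_j} = P(g_{\phi_j}^o, g_{\phi_j}^b)$ of Corollary~\ref{genv} (available because $g_{\phi_j}^o \in \N(D)$ forces finite total MA mass) combined with a final application of Theorem~\ref{IP} to $\lim^* g_{\phi_j}^o$ and $g_\phi^o$, whose MA measures coincide by Corollary~\ref{gGmeas}.

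The chief obstacle is reconciling the \emph{pluripolar} parts of $(dd^c \phi_j)^n$ with that of $(dd^c \phi)^n$ under monotone convergence: while non-pluripolar MA masses behave monotonically in a clean way, pluripolar contributions can concentrate or spread subtly in the limit, so getting a two-sided control requires both the uniform mass bound and the Comparison Theorem~\ref{CT} simultaneously.
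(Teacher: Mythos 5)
Your overall strategy --- pass to the increasing limit $u={\lim}^*g_{\phi_j}$, match $(dd^cu)^n$ with $(dd^cg_\phi)^n$, and conclude by the Identity Principle in a class $\N(D,g_{\fb\phi})$ --- is the same as the paper's. But the decisive step, ``together these place $u\in\N(D,g_{\fb\phi})$'', is precisely where the work lies and you have not supplied it. That membership requires a \emph{single} $w\in\N(D)$ with $g_{\fb\phi}+w\le u$; knowing $g_{\phi_j}\in\F(D,g_{\fb\phi_j})$ for each $j$ only yields minorants $w_j$ depending on $j$, and neither the idempotency of $g_{\fb\phi}$ nor $\fb\phi_j\nearrow\fb\phi$ produces a limit minorant (note that the one candidate you do control, $w_1$, sits below $g_{\fb\phi_1}$, not below $g_{\fb\phi}$). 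The paper gets this by choosing $w_j\in\F(D)$ with $\int_D(dd^cw_j)^n\le M$ (Cegrell's decomposition, which is where the uniform mass bound enters), extracting via Demailly's compactness an $L^1_{loc}$-convergent subsequence $w_j\to w\in\F(D)$, and passing to the limit in $g_{\fb\phi_j}+w_j\le g_{\phi_j}\le g_{\fb\phi_j}$. Without this extraction the application of Theorem~\ref{IP} is unfounded. Two smaller misattributions: $\fb\phi_j\nearrow\fb\phi$ does not come from (\ref{fund}) but from a squeeze ($h={\lim}^*\fb\phi_j$ is maximal and majorizes $\phi$, so $h\ge\fb\phi$, while $\fb\phi_j\le\fb\phi$); and the identity $g_{\fb\phi}=\fb\phi$ needed for $g_{\fb\phi_j}\nearrow g_{\fb\phi}$ comes from the domination argument of Theorem~\ref{gphi2}(iii) applied to the maximal function $h\in\PSH_s$, not from Theorem~\ref{bd}(ii).

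The second gap is your mechanism for the measure identity. The equality $\int_D(dd^cu)^n=\lim_j\int_D(dd^cg_{\phi_j})^n$ is false for increasing sequences in general: for $u_j=\max\{G_a,-1/j\}\nearrow 0$ the total mass is constantly $(2\pi)^n$ while the limit has mass $0$, the mass escaping to $\partial D$; weak convergence only gives lower semicontinuity of mass on open sets. So ``bounding against the pluripolar part of $(dd^c\phi)^n$'' does not follow from it. What is actually available is the one-sided comparison
$(dd^cg_{\phi_j})^n=\Bone_{L'(\phi_j)}(dd^c\phi_j)^n\ge\Bone_{L'(\phi)}(dd^c\phi)^n=(dd^cg_\phi)^n$
from Theorem~\ref{CT} together with weak convergence, which gives $(dd^cu)^n\ge(dd^cg_\phi)^n$ on pluripolar sets (the direction you already have from $u\le g_\phi$); the reverse inequality is the genuinely delicate point, and you would need to prove convergence of the residual (pluripolar) masses rather than gesture at total masses. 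The paper is admittedly terse here as well, but your proposed justification, as stated, rests on a false principle.
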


\begin{proof}
We may assume that all $\int_D (dd^c\phi_j)^n\le M$ for some $M>0$ (and thus, the total MA mass of $\phi$ is bounded by $M$ as well), so $\phi_j\in\F(D,\fb\phi_j)$ by the aforementioned \cite[Thm. 2.1]{Ce08}, see the discussion before Theorem~\ref{IP}. More precisely, for each $j$ there exists $w_j\in\F(D)$ such that
$\int_D (dd^c w_j)^n\le M$ and $\fb\phi_j+w_j\le \phi_j\le b\phi_j$. Then we also have
\beq\label{gtilde}g_{\fb\phi_j}+w_j\le g_{\phi_j}\le g_{\fb\phi_j}.\eeq
By \cite[Appendix]{De09},
there exists a subsequence $w_{j_k}$ converging in $L_{loc}^1(D)$ to some $w\in\F(D)$; we can assume $w_j\to w$.

The sequence $\fb\phi_j$ increases q.e. to a maximal psh function $h$, satisfying $\fb\phi\le h\le g_{\fb\phi}$. Since $h\in\PSH_s$, we have $g_{g_{\fb\phi}}=g_{\fb\phi}$, which implies $g_h=g_{\fb\phi}$ and then,
exactly as in the proof of Theorem~\ref{gphi2}(iii), $g_h=h$. Therefore,
$g_{\fb\phi_j}\to g_{\fb\phi}$.
By Theorem~\ref{bd}(i), $g_\psi^b=g_{\fb\psi}$ for any $\psi\in\F(D,\fb\psi)$, so we have apparently proved $g_{\phi_j}^b\to g_\phi^b$.

The functions $g_{\phi_j}$ increase q.e. to $u\in\E(D)$ with $(dd^cu)^n=\Bone_{L'(\phi)}(dd^c\phi)^n$. Then, by (\ref{gtilde}),
$ g_{\fb\phi_j}+w_j\le u\le g_{\fb\phi_j}$
for all $j$, and passing to the limit as $j\to\infty$ we get $g_{\fb\phi}+w\le u\le g_{\fb\phi}$. Therefore, we have $u\in\F(D, g_{\fb\phi})$. By by Theorem~\ref{bd}(i),  $g_\phi\in \F(D, g_{\fb\phi})$ as well.
Since $u\le g_\phi$ and $(dd^cu)^n = (dd^cg_\phi)^n$, Theorem~\ref{IP} implies $u=g_\phi$. This proves $g_{\phi_j}\to g_\phi$.

Similarly, $g_{\phi_j}^o\to v\le g_\phi^o$ with $(dd^cv)^n=(dd^c g_\phi^o)^n$, and the two functions coincide, provided both belong to $\N(D)$.
\end{proof}

\begin{remark} {\rm If $\phi_j$ decrease to $\phi$, then the limit $g_{\phi_j}$ exists but does not need to coincide with $g_\phi$; a simple example is $\phi_j=\max\{\phi,-j\}\in L^\infty(D)$, so  $g_{\phi_j}\equiv 0$ irrespectively of $\phi$.}
\end{remark}


\section{Asymptotic rooftops with respect to singularities}\label{ers}

Let $\phi,\psi\in\PSH^-(D)$.
The function
\beq\label{pvu} P[\phi](\psi)={\sup}^* \{P(\psi,w):\: w\in\PSH^-(D), \ w\sim \phi\}\eeq
is {\it the asymptotic envelope}, or {\it asymptotic rooftop}, of $\psi$ with respect to the singularity of $\phi$}.
Equivalently,
$$  P[\phi](\psi)={\sup}^* \{P(\psi,\phi+C):\: C\in \R\} ={\lim}^*_{C\to\infty} \,P(\psi,\phi+C).$$

It was shown in \cite{R16} that for any $\phi,\psi\in\F_1(D)$, one has $P[\phi](\psi)=\psi$. The argument was based on the proof of the corresponding fact,  \cite[Thm.~4.3]{Da14a}, for $\omega$-psh functions on a compact K\"{a}hler manifold $(X,\omega)$ with full Monge-Amp\`ere mass, and used finiteness of the Monge-Amp\`ere energy in the class $\F_1(D)$.

Here we will see that, actually, no energy consideration is needed at all if $\phi,\psi\in\F_1(D)$  and even more generally, if $\fb \phi$ is not `too wild' at the boundary and $(dd^c\phi )^n$ does not charge pluripolar sets. We will also explore some other situations where the relation $P[\phi](\psi)=\psi$ for all $\psi\le g_\phi$ takes place; note that the latter condition is always fulfilled if $g_\phi=0$, which is the case if $\phi\in\F_1(D)$.

It follows directly from the definition that
$P[\phi](0)=g_\phi$ and
\beq\label{mainin} P[\phi](\psi)\le P(\psi,g_\phi).\eeq
In particular, it gives us

\begin{proposition}\label{onedir} If $P[\phi](\psi)=\psi$, then $\psi\le g_\phi$. If $g_\phi$ is idempotent, this implies ${g_\psi}\le g_\phi$.
\end{proposition}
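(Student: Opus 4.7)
The statement appears to follow almost immediately from inequality (\ref{mainin}) together with the elementary properties of rooftop envelopes and the monotonicity of $g_\bullet$. My plan is as follows.

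For the first assertion, I would start from the inequality already noted in the paper,
\[
P[\phi](\psi)\le P(\psi,g_\phi).
\]
Since $P(\psi,g_\phi)$ is by definition a psh minorant of $\min\{\psi,g_\phi\}$ (Proposition~\ref{Puv}(ii)), in particular $P(\psi,g_\phi)\le g_\phi$ pointwise in $D$. Chaining this with the hypothesis $P[\phi](\psi)=\psi$ gives
\[
\psi \;=\; P[\phi](\psi) \;\le\; P(\psi,g_\phi) \;\le\; g_\phi,
\]
which is exactly what is claimed.

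For the second assertion, I would use monotonicity of the Green--Poisson residual function (Proposition~\ref{gphi}(ii)). From $\psi\le g_\phi$ one has in particular $\psi\preccurlyeq g_\phi$, so $g_\psi\le g_{g_\phi}$. Combining with the hypothesis of idempotency $g_{g_\phi}=g_\phi$ yields $g_\psi\le g_\phi$.

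There is no real obstacle here: the whole content is packaged into (\ref{mainin}) (which rests on the identity $P[\phi](0)=g_\phi$ and the obvious comparison $P(\psi,\phi+C)\le P(\psi,g_\phi)$ obtained because $\phi+C\le g_\phi+C'$ for suitable $C'$ when restricted to the class of competitors) and into the idempotency hypothesis. The only thing worth double-checking while writing the proof in full is that one genuinely has $P(\psi,g_\phi)\le g_\phi$ everywhere and not merely quasi-everywhere, but this is immediate from Proposition~\ref{Puv}(ii). No Monge--Amp\`ere machinery, no appeal to the Cegrell classes, and no smallness assumption on unbounded loci is needed for this proposition.
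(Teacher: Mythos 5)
Your proof is correct and is exactly the argument the paper intends: the proposition is stated as an immediate consequence of (\ref{mainin}) via $\psi=P[\phi](\psi)\le P(\psi,g_\phi)\le g_\phi$, and the second claim follows from Proposition~\ref{gphi}(ii) together with idempotency. No discrepancy with the paper's reasoning.
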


Furthermore,
we have
\beq\label{mainq} P[\phi](\psi)=P(\psi,g_\phi) \eeq
for any $\psi$, if $\phi$ has model singularity. An intriguing question is if this remains true for any $\phi$ and $\psi$.

\begin{remark}\label{psiless} {\rm
It suffices to establish (\ref{mainq}) for all $\psi\le g_\phi$, in which cases it takes the form
\beq\label{mainq1} P[\phi](\psi)=\psi.\eeq
Indeed, denoting $\psi'=P(\psi,g_\phi)\le g_\phi$
and assuming $P[\phi](\psi')=\psi'$,
we have
$$ P[\phi](\psi)\ge P[\phi](\psi') =  \psi'=P(\psi,g_\phi)$$
which, in view of (\ref{mainin}), gives us (\ref{mainq}).
Obviously, (\ref{mainq1}) is true if $\psi=g_\phi$ or if $\psi\preceq \phi$.}
\end{remark}

\begin{proposition}\label{propyes} Relation
(\ref{mainq}) is true, provided one of the following conditions is fulfilled:
\begin{enumerate}
\item[(i)] $\phi\ge g_\phi+w$ with $w\in\PSH^-(D)$ such that $g_w=0$;
\item[(ii)] $g_\phi=0$;
\item[(iii)] $\phi$ has approximately model singularity.
\end{enumerate}
\end{proposition}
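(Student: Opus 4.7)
By Remark~\ref{psiless}, it suffices to prove $P[\phi](\psi) = \psi$ whenever $\psi \le g_\phi$, and since $P[\phi](\psi) \le \psi$ is immediate from (\ref{pvu}), only the reverse inequality is at stake. In each of (i)--(iii) I plan to construct a sequence $u_n = \psi + v_n \in \PSH^-(D)$ satisfying $u_n \le \min\{\psi,\phi+C_n\}$ for constants $C_n \in \R$, with $v_n \to 0$ quasi-everywhere in $D$. Then $u_n \le P(\psi,\phi+C_n) \le P[\phi](\psi)$ pointwise, and upper semicontinuity of $P[\phi](\psi)$ together with $u_n \to \psi$ quasi-everywhere will force $\psi \le P[\phi](\psi)$ everywhere.

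In case (i), a Choquet sequence $v_n \in \cS_w$ increases quasi-everywhere to $g_w = 0$, so $v_n \le w + C_n$ for some $C_n$. The hypothesis $\phi \ge g_\phi + w$ together with $\psi \le g_\phi$ and $v_n \le 0$ gives the required double bound $u_n \le g_\phi + w + C_n \le \phi + C_n$ and $u_n \le \psi$. Case (ii) is the specialization $w := \phi$ of case (i): the inequality $\phi \ge g_\phi + \phi$ holds trivially because $g_\phi = 0$, and $g_w = g_\phi = 0$.

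Case (iii) runs by the same scheme, replacing the external function $w$ with a multiple of $g_\phi$ itself. Choose $\sigma_n \nearrow 1$ and set $v_n := \tfrac{1-\sigma_n}{\sigma_n}\, g_\phi \in \PSH^-(D)$; then $v_n \nearrow 0$ pointwise wherever $g_\phi > -\infty$ (hence quasi-everywhere), and for $\psi \le g_\phi$ the approximately model bound $g_\phi \le \sigma_n \phi + C_{\sigma_n}$ yields
\[ u_n \;\le\; g_\phi + \tfrac{1-\sigma_n}{\sigma_n}\, g_\phi \;=\; \tfrac{1}{\sigma_n}\, g_\phi \;\le\; \phi + \tfrac{C_{\sigma_n}}{\sigma_n}, \]
while $u_n \le \psi$ is automatic. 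The three hypotheses are engineered precisely so that a decomposition $\phi \gtrsim g_\phi + v$ with $v \to 0$ (quasi-everywhere, in an admissible psh sense) is at one's disposal, and the only genuinely delicate point is the routine usc-regularization step at the end: since $\psi$ is psh, hence usc, the identity $\sup_n u_n = \psi$ quasi-everywhere upgrades to $\psi \le P[\phi](\psi)$ pointwise. I do not anticipate any substantive obstacle beyond correctly bookkeeping the negativity of $v_n$ and the translation invariance $g_{\,\cdot\,+\,\text{const}} = g_{\,\cdot\,}$ implicit in case (iii).
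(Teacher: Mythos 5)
Your proof is correct and follows essentially the same route as the paper: in (i)--(ii) you are instantiating the superadditivity $P(\psi,g_\phi+w+C)\ge P(\psi,g_\phi)+P(0,w+C)$ that the paper uses, with $P(\psi,g_\phi)=\psi$ (since you reduced to $\psi\le g_\phi$) and your $v_n$ playing the role of $P(0,w+C_n)$; and in (iii) your rearrangement $\tfrac{1}{\sigma}g_\phi=g_\phi+\tfrac{1-\sigma}{\sigma}g_\phi$ encodes the same scaling by $\sigma$ that the paper expresses as $P[g_\phi](\psi)\le P[\sigma\phi](\psi)\le\sigma P[\phi](\psi)$.
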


\begin{proof} By (\ref{mainin}), it suffices to establish the inequality
\beq\label{converse} P[\phi](\psi)\ge P(\psi,g_\phi).\eeq
Assuming (i), we have for any $C>0$,
$$ P(\psi,\phi+C)\ge P(\psi,g_\phi+w+C)\ge P(\psi,g_\phi)+P(0, w+C), $$
When $C\to\infty$,   $P(\psi,\phi+C)$ increases q.e. to $P[\phi](\psi)$ and $ P(0, w+C)$  to $g_w=0$, we get (\ref{converse}).
Condition (ii) is a particular case of (i) with $w=\phi$, and in (iii), we have
$$P[g_\phi](\psi)\le P[\sigma\phi](\psi)\le\sigma P[\phi](\psi)$$
for any $\sigma<1$.
\end{proof}

\begin{remark}\label{remyes} {\rm 1. Proposition~\ref{propyes}(ii) implies a result from \cite{R16} for $\phi,\psi\in\F_1(D)$ because, in this case, $g_\phi=0$.

\medskip
2. More generally, if $\phi\in \F(D,\fb\phi)\cap\E^a(D)$ and $g_{\fb\phi}=0$, then $g_\phi=0$ as well and so, $P[\phi](\psi)=P(\psi,g_\phi)$ for any $\psi\in\PSH^-(D)$.

\medskip
3. When $n=1$, the function $\phi-g_\phi$ extends to a negative subharmonic function in $D$, so  $\phi=g_\phi +w$ with $w=\phi-g_\phi\in\DSH(D)$. Moreover, since
$g_\phi=g_{w+g_\phi}= g_w+ g_\phi$, we have $g_w=0$ and the condition in Prop.~\ref{propyes}(i)  is fulfilled and so, (\ref{mainq}) holds for any negative  subharmonic functions in $D\subset \C$. }
\end{remark}

\begin{corollary}\label{cor}
Let $\phi,\psi\in\PSH^-(D)$ with $\phi$ satisfying one of the conditions of Proposition~\ref{propyes}. Then $P[\phi](\psi)=\psi$ if and only if $\psi\le g_\phi$.
\end{corollary}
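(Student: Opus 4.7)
The plan is to handle the two directions separately, both of which follow almost immediately from results established just above.

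For the forward direction, suppose $P[\phi](\psi) = \psi$. By inequality~(\ref{mainin}), which says $P[\phi](\psi) \le P(\psi,g_\phi)$, and the trivial bound $P(\psi,g_\phi) \le g_\phi$, we obtain $\psi = P[\phi](\psi) \le g_\phi$. This is exactly the content of the first assertion of Proposition~\ref{onedir} and requires no hypothesis on $\phi$.

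For the converse, suppose $\psi \le g_\phi$. Since $\phi$ satisfies one of the conditions of Proposition~\ref{propyes}, we have the identity $P[\phi](\psi) = P(\psi,g_\phi)$. But $\psi \le g_\phi$ implies $\min\{\psi,g_\phi\} = \psi$, which is already psh, so it equals its own largest psh minorant; that is, $P(\psi,g_\phi) = \psi$. Combining these gives $P[\phi](\psi) = \psi$.

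There is no real obstacle here: the whole work is packaged into Propositions~\ref{onedir} and~\ref{propyes}, and the corollary is essentially a restatement of their combined content together with the trivial observation that the rooftop envelope of two comparable psh functions is the smaller of the two. The only thing worth noting explicitly in the write-up is that the hypothesis on $\phi$ is used only for the ``if'' direction, while the ``only if'' direction is unconditional.
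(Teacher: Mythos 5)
Your proof is correct and is exactly the argument the paper intends: the forward direction is Proposition~\ref{onedir} (equivalently, inequality (\ref{mainin}) plus $P(\psi,g_\phi)\le g_\phi$), and the converse combines Proposition~\ref{propyes} with the observation that $\psi\le g_\phi$ forces $P(\psi,g_\phi)=\psi$. Nothing to add.
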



Note also that, for functions with small unbounded locus, the following is true.

\begin{proposition}\label{gpas} Let $\phi,\psi\in\PSH_s(D)$, then
 $ g_{P(\phi,\psi)}= g_{P[\phi](\psi)}= g_{P[\psi](\phi)}$.
\end{proposition}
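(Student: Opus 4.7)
The plan is to sandwich $g_{P[\phi](\psi)}$ between two copies of $g_{P(\phi,\psi)}$. One direction is immediate: since $P(\phi,\psi)=P(\psi,\phi+0)$ is one of the rooftop envelopes in the family whose regularized supremum defines $P[\phi](\psi)$, we have $P(\phi,\psi)\le P[\phi](\psi)$ pointwise, and Proposition~\ref{gphi}(ii) gives $g_{P(\phi,\psi)}\le g_{P[\phi](\psi)}$.

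For the reverse inequality, the key step is a shift trick. For any $C\ge 0$, the function $P(\psi,\phi+C)-C$ is psh and dominated by $\min\{\psi-C,\phi\}\le \min\{\psi,\phi\}$, so by the maximality characterization of the rooftop envelope,
\[ P(\psi,\phi+C)-C\le P(\phi,\psi). \]
Thus each $P(\psi,\phi+C)\in\cS_{P(\phi,\psi)}$, and taking the u.s.c.\ regularized supremum over $C$ yields
\[ P[\phi](\psi)\le g_{P(\phi,\psi)}. \]

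To conclude, I would verify that $P(\phi,\psi)\in\PSH_s^-(D)$: from $P(\phi,\psi)\ge \phi+\psi$ (Proposition~\ref{Puv}(i)) it follows that $L(P(\phi,\psi))\subseteq L(\phi)\cup L(\psi)$ and $BL(P(\phi,\psi))\subseteq BL(\phi)\cup BL(\psi)$, both pluripolar. Then Theorem~\ref{gphi2}(iii) applies and gives the idempotency $g_{g_{P(\phi,\psi)}}=g_{P(\phi,\psi)}$. Combining this with Proposition~\ref{gphi}(ii) applied to $P[\phi](\psi)\preceq g_{P(\phi,\psi)}$,
\[ g_{P[\phi](\psi)}\le g_{g_{P(\phi,\psi)}}=g_{P(\phi,\psi)}, \]
so the two $g$'s coincide. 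The equality $g_{P[\psi](\phi)}=g_{P(\phi,\psi)}$ then follows by swapping the roles of $\phi$ and $\psi$.

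The main obstacle is spotting the shift trick $P(\psi,\phi+C)-C\le \min\{\psi,\phi\}$; once this is in hand, the rest is routine bookkeeping with the monotonicity and idempotency already developed in Sections~2--3. Note in particular that no hypothesis on the singularity type of $\phi$ or $\psi$ (such as model or approximately model) is needed, only the smallness of the unbounded loci, which enters solely through the idempotency in Theorem~\ref{gphi2}(iii).
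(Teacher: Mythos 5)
Your proof is correct and follows essentially the same route as the paper's: you establish $P[\phi](\psi)\le g_{P(\phi,\psi)}$ (the paper derives this same inequality via the identity $g_{P(\phi,\psi)}={\lim}^* P(P(\phi+C,\psi+C),0)\ge{\lim}^* P(\phi+C,\psi)=P[\phi](\psi)$, where your ``shift trick'' is simply a rephrasing of the fact that $P(\psi,\phi+C)\in\cS_{P(\phi,\psi)}$), and then both arguments conclude by invoking the idempotency of $g_{P(\phi,\psi)}$ from Theorem~\ref{gphi2}(iii). Your explicit check that $P(\phi,\psi)\in\PSH_s^-(D)$ via $P(\phi,\psi)\ge\phi+\psi$ is a small but useful detail that the paper leaves implicit.
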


\begin{proof} We have evidently $g_{P(\phi,\psi)}\le g_{P[\phi](\psi)}$. On the other hand, as $C\to\infty$,
$$ g_{P(\phi,\psi)}= {\lim}^* P(P(\phi+C,\psi+C),0) \ge {\lim}^* P(P(\phi+C,\psi),0)
= {\lim}^* P(\phi+C,\psi)={P[\phi](\psi)}.$$
When ${P(\phi,\psi)}\in\PSH_s(D)$, its residual function is idempotent and we derive the reverse inequality.
\end{proof}

\section{Geodesics}\label{sect_geod}
Psh geodesics in the local setting of domains in $\Cn$ were considered in \cite{A}, \cite{BB}, \cite{H}, \cite{R16}.

Denote by $\mathbb A$ the annulus in $\C$
bounded by the circles
$T_j=\{\zeta\in\C:\: \log|\zeta|=j\}$, $j=0,1$. Let $D$ be a bounded hyperconvex domain in $\Cn$.
Given two functions $u_0,u_1\in\PSH^-(D)$, equal to zero on $\partial D$, we
consider the class $W=W(u_0,u_1)$
of all functions $u\in\PSH^-(D\times \mathbb A)$ such that $$\limsup_{\zeta\to T_j} u(z,\zeta)\le u_j(z)\quad \forall z\in D.$$
Its Perron envelope $\cP_W(z,\zeta)=\sup\{u(z,\zeta):\: u\in W\}\in W$
satisfies $\cP_W(z,\zeta)=\cP_W(z,|\zeta|)$, which gives rise to the functions $$u_t(z):=\cP_W(z,e^t), \quad 0<t<1,$$ and the map $t\mapsto u_t$ is the  {\it geodesic} for $u_0$ and $u_1$. When $u_t$ tends to $u_j$ as $t\to j$, we say that the geodesic connects $u_0$ and $u_1$.

When the functions $u_j$ are bounded, we have $(dd^c\cP_W)^{n+1}=0$ in $D\times\mathbb A$, and the geodesic $u_t\to u_j$ uniformly on $D$.
In particular, this is true if $u_j$ belong to the Cegrell class $\E_0(D)$ of bounded psh functions $\phi$ in $D$ with zero boundary values on $\partial D$ and $(dd^c\phi)^n(D)<\infty$. By approximation arguments, this extends to functions from the Cegrell class $\F_1(D)$, however in this case the convergence $u_t$ to $u_j$ is in capacity \cite{R16}.

Let now $u_0,u_1$ be arbitrary functions from $\PSH^-(D)$. By \cite{Ce09}, they are limits of decreasing sequences of functions $u_{j,N}\in\E_0(D)$ as $N\to\infty$. Then the corresponding geodesics $u_{t,N}$ decrease to the geodesic $u_{t}$ such that $u_{\Ree\zeta}(z)\in \PSH^-(D\times \A)$. If $u_0,u_1\in\E(D)$, then $u_{t}\in\E(D)$ for any $t$ and $\cP_W(z,\zeta)=u_{\log|\zeta|}(z)\in\E(D\times \A)$ just because $u_t\ge u_0+u_1$, and  $(dd^c\cP_W)^{n+1}=0$. In the general case, $\cP_W$ is still a maximal plurisubharmonic function in $D\times \A$ as the limit of a decreasing sequence of maximal functions.

We are interested in the behaviour of $u_{t}$ as $t\to j\in\{0,1\}$.
Since $u_{t,N}\le (1-t)u_{0,N}+tu_{1,N}$ for any $N$, we get $u_{t}\le (1-t)u_{0}+tu_{1}$. Therefore, $\limsup_{t\to j} u_{t}\le u_j$. Moreover, for any $\epsilon>0$, the capacity of the set $\{z:\: u_{t}(z)> u_0(z)+\epsilon \}$ tends to $0$ as $t\to 0$, and similarly when $t\to 1$. More nontrivial is control over the size of the sets $\{z:\: u_{t}(z)< u_0(z)-\epsilon \}$, which we will handle here by following the proof of \cite[Thm. 5.2]{Da14a} (see Section~5 of \cite{R16} for the affine case).

\begin{theorem}\label{prop_geod} Let $u_0,u_1\in\PSH^-(D)$, then
the geodesic $u_{t}$ converges to $u_0$ in $L_{loc}^1(D)$ (and in capacity) as $t\to 0$  if and only if $P[u_1](u_0)=u_0$.
\end{theorem}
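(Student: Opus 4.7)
My plan is to establish the stronger identity $\lim_{t\to 0^+} u_t = P[u_1](u_0)$ in $L^1_{loc}(D)$ (and in capacity), from which the biconditional in the statement is immediate. For the lower bound $\liminf_{t\to 0^+} u_t\ge P[u_1](u_0)$, I would use the explicit family of subgeodesics $v^C(z,\zeta):=P(u_0,u_1+C)(z)-C\log|\zeta|$, $C\in\R$, on $D\times\mathbb A$. Each $v^C$ is psh (sum of a $\PSH^-(D)$ function and a pluriharmonic function of $\zeta\in\mathbb A$), and its upper boundary traces $P(u_0,u_1+C)\le u_0$ on $T_0$ and $P(u_0,u_1+C)-C\le u_1$ on $T_1$ (using $P(u_0,u_1+C)\le u_1+C$) place $v^C$ in $W(u_0,u_1)$; hence $v^C\le \cP_W$, which after the substitution $t=\log|\zeta|$ reads $u_t(z)\ge P(u_0,u_1+C)(z)-Ct$. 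Letting $t\to 0^+$ and taking the regularized supremum over $C$ gives the claimed lower bound.

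For the reverse $\limsup u_t\le P[u_1](u_0)$, I would first establish the identity $P(u_0,u_1+C)=P(P[u_1](u_0),u_1+C)$ for every $C$ (each side is the largest psh minorant of $P[u_1](u_0)$ and $u_1+C$), which yields the idempotency $P[u_1](P[u_1](u_0))=P[u_1](u_0)$. Let $\tilde u_t$ be the geodesic between $P[u_1](u_0)$ and $u_1$. The lower bound of the previous paragraph, applied to the pair $(P[u_1](u_0),u_1)$, combined with the chord inequality $\tilde u_t\le (1-t)P[u_1](u_0)+tu_1$, forces $\tilde u_t\to P[u_1](u_0)$ as $t\to 0$. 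Since $P[u_1](u_0)\le u_0$ we have $W(P[u_1](u_0),u_1)\subset W(u_0,u_1)$ and hence $\tilde u_t\le u_t$. The remaining task---the analogue of \cite[Thm.~5.2]{Da14a}---is the reverse inclusion $u_t\le \tilde u_t$, which I would handle by approximating $u_t$ by the geodesics $u_{t,N}$ between $\max\{u_0,-N\}$ and $\max\{u_1,-N\}$, comparing $u_{t,N}$ with the corresponding $\tilde u_{t,N}$ via the generalized domination principle of Lemma~\ref{CP} inside $D\times\mathbb A$ (with $T_0\cup T_1$ minus a pluripolar exceptional set playing the role of $\partial\omega\setminus F$), and passing to the limit $N\to\infty$ using the monotone continuity of the Monge--Amp\`ere operator on the product.

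Once $\lim u_t = P[u_1](u_0)$ pointwise q.e.\ is established, the uniform domination $u_0+u_1\le u_t\le 0$ (since $u_0+u_1$ is itself a trivial time-independent subgeodesic) upgrades the convergence to $L^1_{loc}$ by Lebesgue's theorem, and convergence in capacity follows from standard pluripotential estimates for locally uniformly bounded families of psh functions; the theorem then reads off directly. The main obstacle is the reverse inclusion $u_t\le \tilde u_t$ in the upper bound: in the compact K\"ahler setting of \cite{Da14a} it rests on monotonicity of non-pluripolar Monge--Amp\`ere masses, a tool not directly available in the local context, so the local replacement via Lemma~\ref{CP} together with a small-unbounded-locus approximation and the idempotency properties of $P[u_1]$ developed in Section~\ref{ers} is the delicate crux of the argument.
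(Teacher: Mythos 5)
Your lower bound (the direction $P[u_1](u_0)=u_0 \Rightarrow u_t\to u_0$) is exactly the paper's: the subgeodesics $v^C(z,\zeta)=P(u_0,u_1+C)(z)-C\log|\zeta|$ dominate $\cP_W$ from below, and letting $C\to\infty$ recovers $P[u_1](u_0)$. The identity $P(u_0,u_1+C)=P(P[u_1](u_0),u_1+C)$ and the resulting idempotency of $P[u_1](\cdot)$ are also correct. The gap is in your treatment of the converse.

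You reduce the converse to the inequality $u_t\le\tilde u_t$, where $\tilde u_t$ is the geodesic joining $P[u_1](u_0)$ and $u_1$. As you correctly observe from $W(P[u_1](u_0),u_1)\subset W(u_0,u_1)$, the envelope comparison gives $\tilde u_t\le u_t$, so what you need is the \emph{opposite} inequality, i.e.\ equality of the two geodesics on $D\times\mathbb A$. Your proposed route --- truncate to bounded data $\max\{u_0,-N\}$ and $\max\{P[u_1](u_0),-N\}$, apply Lemma~\ref{CP} on $D\times\mathbb A$, let $N\to\infty$ --- does not close this gap and in fact produces the inequality you already have: both truncated geodesics are maximal on $D\times\mathbb A$ (so the Monge--Amp\`ere comparison in Lemma~\ref{CP} is an equality of zero measures and carries no information), and the boundary comparison is $\max\{P[u_1](u_0),-N\}\le\max\{u_0,-N\}$, which through the domination principle yields $\tilde u_{t,N}\le u_{t,N}$, the wrong direction. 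There is no mechanism in this setup forcing $u_{t,N}\le\tilde u_{t,N}$, and indeed the claim $u_t=\tilde u_t$ for $0<t<1$ is a strictly stronger assertion than what the theorem needs; it is not established by what you wrote.

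The paper avoids this entirely. For the converse it follows \cite[Thm.~5.2]{Da14a} and uses a Kiselman minimum-principle / Legendre-transform argument: for a.e.\ fixed $x$, the function $t\mapsto u_t(x)$ is convex, so
$$\lim_{t\to 0^+}u_t(x)=\lim_{\tau\to-\infty}\inf_{0<t<1}\bigl(u_t(x)-\tau t\bigr),$$
and Kiselman's minimum principle combined with the subgeodesic bound $u_t\ge P(u_0,u_1-\tau)+\tau t$ identifies the inner infimum with $P(u_0,u_1-\tau)(x)$; letting $\tau\to-\infty$ produces $P[u_1](u_0)(x)$. Thus $u_t\to u_0$ in $L^1_{loc}$ forces $u_0=P[u_1](u_0)$ almost everywhere, hence everywhere. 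This argument is local, requires no comparison of non-pluripolar masses and no truncation scheme, and is the replacement you were looking for; the Lemma~\ref{CP} route cannot substitute for it here.
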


\begin{proof} Denote $ p_C:= P(u_0,u_1+C)$. For any real $C$, the function
$ w_{t,C}=p_C-Ct$ is a subgeodesic for $u_0$ and $u_1$ and so, $w_{t,C}\le u_{t}$ . Therefore, for any $\epsilon>0$,
$$A_{\epsilon,t}:=\{z:\: u_{t}(z)-u_0(z)<-\epsilon\}\subset \{z:\: w_{t,C}(z)-u_0(z)<-\epsilon\}$$
and so,
$$\lim_{t\to 0}\Capa A_{\epsilon,t}\le \inf_{C\ge 0}\Capa B_{\epsilon,C},$$
where $B_{\epsilon,C}=\{z:\: p_C(z)-u_0(z)<-\epsilon\}$.

The family $p_C $ increases as $C\to\infty$  to $P[u_1](u_0)\le u_0$ q.e. and thus both in $L_{loc}^1(D)$ and in capacity. Therefore,  the equality $P[u_1](u_0)= u_0$ implies the convergence of $u_{t}$ to $u_0$ in capacity and in $L_{loc}^1(D)$.

The converse statement is proved in \cite[Thm. 5.2]{Da14a} for quasi-psh functions on compact K\"{a}hler manifolds, however the proof in the local setting is exactly the same. It is based on the relations
$$ u_0(x)=\lim_{\tau\to-\infty}\inf_{0<t<1}(u_t(x)-\tau\, t) = \lim_{\tau\to-\infty}P(u_0, u_1-\tau)(x),$$
valid for almost all $x\in D$, and showing that the convergence $u_t\to u_0$ in $L_{loc}^1(D)$ implies $P[u_1](u_0)=u_0$.
\end{proof}

\medskip

As a direct consequence, we get that any $\phi\in\PSH^-(D)$ can be connected with its Green-Poisson function $g_\phi$ by the geodesic.

\begin{corollary}\label{corind}
Let $u_0=\phi\in\PSH^-(D)$ and $u_1=g_\phi$, then $u_t\to u_j$ in capacity as $t\to j$, $j=0,1$.
\end{corollary}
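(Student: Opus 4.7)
The plan is to deduce the corollary directly from Theorem~\ref{prop_geod} by checking the two rooftop-envelope identities $P[g_\phi](\phi)=\phi$ and $P[\phi](g_\phi)=g_\phi$. Theorem~\ref{prop_geod} is stated for the endpoint $t\to 0$; since the annulus $\A$ is invariant under $\zeta\mapsto e/\zeta$, which swaps $T_0$ and $T_1$, the Perron envelope of $W(u_0,u_1)$ at time $t$ coincides with that of $W(u_1,u_0)$ at time $1-t$. Applying the theorem to the reversed pair therefore yields the endpoint statement at $t\to 1$ under the condition $P[u_0](u_1)=u_1$. So the task reduces to the two envelope equalities.

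For $P[g_\phi](\phi)=\phi$, I would argue as follows. By definition $\phi\le g_\phi$, hence $\phi\le g_\phi+C$ pointwise for every $C\ge 0$. Consequently $\min\{\phi,g_\phi+C\}=\phi$, and since $\phi$ is already psh and negative, $P(\phi,g_\phi+C)=\phi$ for all $C\ge 0$. Taking the regularized supremum over $C$ gives $P[g_\phi](\phi)=\phi$. (For $C<0$ the envelope only shrinks, so it is irrelevant for the sup.)

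For $P[\phi](g_\phi)=g_\phi$, the inequality $P[\phi](g_\phi)\le g_\phi$ is immediate from $P(g_\phi,\phi+C)\le g_\phi$. For the reverse inequality, I would pick any $w\in\cS_\phi$, so that $w\le\phi+C_w$ for some $C_w\in\R$. Since $w\le g_\phi$ by definition of $g_\phi$, we have $w\le\min\{g_\phi,\phi+C_w\}$, and $w$ being psh gives $w\le P(g_\phi,\phi+C_w)\le P[\phi](g_\phi)$. Taking the upper semicontinuous regularization of the supremum over $w\in\cS_\phi$ yields $g_\phi\le P[\phi](g_\phi)$.

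Combining the two identities with Theorem~\ref{prop_geod} (applied directly for $t\to 0$ and via the reversal $\zeta\mapsto e/\zeta$ for $t\to 1$) gives convergence $u_t\to\phi$ and $u_t\to g_\phi$ in capacity and in $L^1_{loc}$ at the respective endpoints. There is no real obstacle: both envelope identities fall out of the defining inequality $\phi\le g_\phi$ together with the definition of $\cS_\phi$. The only minor point worth noting explicitly is the symmetry argument that transports Theorem~\ref{prop_geod} to the right endpoint.
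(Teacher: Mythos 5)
Your proposal is correct and takes essentially the same route as the paper: both reduce the corollary to Theorem~\ref{prop_geod} by verifying the two identities $P[g_\phi](\phi)=\phi$ and $P[\phi](g_\phi)=g_\phi$. The only cosmetic differences are that you check the second identity by testing against each $w\in\cS_\phi$ directly, whereas the paper passes through Proposition~\ref{PuP} and the representation of $g_\phi$ as the regularized limit of $P(\phi+C,0)$, and that you make explicit the symmetry $\zeta\mapsto e/\zeta$ handling the endpoint $t\to 1$, which the paper leaves implicit.
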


\begin{proof} The equality $P[g_\phi](\phi)=\phi$ is obvious, while $P[\phi](g_\phi)=g_\phi$ is because, by Proposition~\ref{PuP},
$$g_\phi\ge P(\phi+C,g_\phi)= P(P(\phi+C,0),g_\phi) =  P(\phi+C,0), \quad C>0,$$
and the right hand side converges q.e. to $g_\phi$ as $C\to\infty$.
\end{proof}

\medskip

Another consequence is a necessary condition for connecting psh functions by geodesics.

\begin{corollary}\label{corind1}
No pair of psh functions with different Green-Poisson functions and small unbounded loci can be connected by a geodesic.
\end{corollary}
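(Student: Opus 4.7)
The plan is to argue by contraposition: assume $u_0,u_1\in\PSH_s^-(D)$ are connected by the geodesic $u_t$, and show that then $g_{u_0}=g_{u_1}$.

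First I would invoke Theorem~\ref{prop_geod} in both directions. The convergence $u_t\to u_0$ in capacity (or in $L^1_{loc}$) as $t\to 0$ gives $P[u_1](u_0)=u_0$. By the obvious symmetry of the geodesic construction (reversing the parameter $t\mapsto 1-t$ swaps the roles of $u_0$ and $u_1$, and the geodesic of $(u_1,u_0)$ is $u_{1-t}$), the convergence $u_t\to u_1$ as $t\to 1$ likewise yields $P[u_0](u_1)=u_1$. Thus both conditions in (\ref{Darv0}) hold.

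Next I would feed these two equalities into Proposition~\ref{onedir}. Since $u_0,u_1\in\PSH_s^-(D)$, Theorem~\ref{gphi2}(iii) tells us that both $g_{u_0}$ and $g_{u_1}$ are idempotent. Then $P[u_1](u_0)=u_0$ combined with idempotency of $g_{u_1}$ gives $g_{u_0}\le g_{u_1}$, and $P[u_0](u_1)=u_1$ combined with idempotency of $g_{u_0}$ gives $g_{u_1}\le g_{u_0}$. Hence $g_{u_0}=g_{u_1}$, contradicting the assumption that the residual functions differ.

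This is essentially a direct concatenation of previously proved results; no real obstacle appears since the small-unbounded-locus hypothesis is used precisely to supply idempotency, which is the one non-trivial ingredient. The only point requiring a brief remark is the reduction of the $t\to 1$ endpoint to the $t\to 0$ statement of Theorem~\ref{prop_geod}, which follows from the symmetry of the Perron envelope $\cP_W$ under the conformal map $\zeta\mapsto e/\zeta$ of $\mathbb A$.
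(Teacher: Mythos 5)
Your argument is correct and is exactly the paper's proof: the author likewise deduces the corollary by combining Theorem~\ref{prop_geod} (applied at both endpoints) with Proposition~\ref{onedir}, where the small-unbounded-locus hypothesis enters only through the idempotency of $g_{u_0}$ and $g_{u_1}$ from Theorem~\ref{gphi2}(iii). Your extra remark on the symmetry $\zeta\mapsto e/\zeta$ just makes explicit a step the paper leaves implicit.
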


\begin{proof} This follows from by Theorem~\ref{prop_geod} and Proposition~\ref{onedir}.
\end{proof}

\medskip

Finally, combining Theorem~\ref{prop_geod} with Corollary~\ref{cor}, we get

\begin{theorem}\label{last}
Let $u_j\in \PSH^-(D)$, $j=0,1$, satisfy the conditions on $\phi$ in Proposition~\ref{propyes}. Then $u_t\to u_j$ in capacity as $t\to j$, $j=0,1$, if and only if $u_0\le g_{u_1}$ and $u_0\le g_{u_1}$. When $u_j\in\PSH_s^-(D)$ or $u_j\in\N(D,\fb\phi_j)$ with
$\fb u_j\in\PSH_s^-(D)$, $j=0,1$, this is equivalent to $g_{u_0}= g_{u_1}$.
\end{theorem}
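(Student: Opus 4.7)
The plan is to combine Theorem~\ref{prop_geod} with Corollary~\ref{cor} for the first equivalence, and then to use idempotency of the Green--Poisson residual function (Theorem~\ref{gphi2}(iii) and Theorem~\ref{bd}(ii)) for the second.

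By Theorem~\ref{prop_geod}, the geodesic $u_t$ converges in capacity to $u_0$ as $t\to 0$ iff $P[u_1](u_0)=u_0$, and symmetrically to $u_1$ as $t\to 1$ iff $P[u_0](u_1)=u_1$. Since each $u_j$ is assumed to satisfy one of the hypotheses of Proposition~\ref{propyes}, Corollary~\ref{cor} rewrites these two conditions respectively as
\[
u_0\le g_{u_1}\qquad\text{and}\qquad u_1\le g_{u_0},
\]
which (correcting the obvious typo in the statement) is the first asserted equivalence.

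For the second equivalence I would argue as follows. Under either of the two extra assumptions --- $u_j\in\PSH_s^-(D)$, or $u_j\in\N(D,\fb u_j)$ with $\fb u_j\in\PSH_s^-(D)$ --- the Green--Poisson residual functions $g_{u_0}$ and $g_{u_1}$ are idempotent: in the first case this is Theorem~\ref{gphi2}(iii), and in the second case it is exactly the conclusion of Theorem~\ref{bd}(ii). Granting idempotency, if $u_0\le g_{u_1}$ and $u_1\le g_{u_0}$, then monotonicity of $g_\bullet$ (Proposition~\ref{gphi}(ii)) gives
\[
g_{u_0}\le g_{g_{u_1}}=g_{u_1}\quad\text{and}\quad g_{u_1}\le g_{g_{u_0}}=g_{u_0},
\]
so $g_{u_0}=g_{u_1}$. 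Conversely, if $g_{u_0}=g_{u_1}$, then $u_0\le g_{u_0}=g_{u_1}$ and $u_1\le g_{u_1}=g_{u_0}$, which closes the loop.

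The proof is essentially a direct assembly, with all the real work having already been done upstream: the delicate piece is Proposition~\ref{propyes}, which converts the asymptotic rooftop identity into the pointwise inequality against $g_\phi$, and Theorem~\ref{gphi2}(iii)/Theorem~\ref{bd}(ii), which supply the idempotency needed to promote the pair of inequalities $u_0\le g_{u_1}$, $u_1\le g_{u_0}$ to the single equality $g_{u_0}=g_{u_1}$. No further obstacle should arise beyond verifying that the classes $\PSH_s^-(D)$ and $\N(D,\fb u_j)\cap\{\fb u_j\in\PSH_s^-(D)\}$ are indeed covered by the idempotency results cited; both cases are explicitly listed there, so the argument goes through without additional hypotheses.
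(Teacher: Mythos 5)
Your proposal is correct and follows exactly the route the paper takes: the paper's entire proof is the one-line remark that the theorem follows by combining Theorem~\ref{prop_geod} with Corollary~\ref{cor}, and your assembly (including the correction of the typo to $u_0\le g_{u_1}$ and $u_1\le g_{u_0}$) is precisely what is intended. The idempotency argument you supply for the second equivalence is the same mechanism the author uses in Proposition~\ref{onedir}, and your citations of Theorem~\ref{gphi2}(iii) and Theorem~\ref{bd}(ii) for the two cases are the right ones.
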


\section{Open questions}
Here we list a few problems that need further investigation. Some of them concern possibility of extending the results from functions with small unbounded loci to general psh functions, while others are not answered even in the case of small unbounded locus and/or functions from the Cegrell class $\E$.

\medskip

I. {\sl Idempotency.} We have established the idempotency $g_{g_\phi}=g_\phi$ of the Green-Poisson functions when $\phi\in\PSH_s^-(D)$ or $\phi\in\N(D,\fb \phi)$ with $\fb\phi\in \PSH_s^-(D)$ (in particular, for $\phi\in\F(D)$). {\it It would be interesting to know if it holds true for any $\phi\in\PSH^-(D)$.}

\medskip
II. {\sl Residual functions of maximal psh functions.} Evidently, $g_\phi\le P(g_\phi^o,g_\phi^b)$ for any psh $\phi$. We have seen that $g_H=g_H^b$ if $H\in\E(D)$ is maximal, so  $g_H^o\ge g_H^b$ in this case. Of course, $g_H^o$ need not to be equal to $0$ for arbitrary maximal $H$ when $n>1$ (take $\phi=\log|f|$ for a holomorphic function $f$ with zeros in $D$), however we wonder  {\it if $g_H^o=0$ if, in addition to the maximality, $H\in\E(D)$.} This would give us $g_\phi^o\in\N(D)$ for any $\phi\in \N(D,\fb \phi)$.

 Also, {\it in the same assumptions, is $g_H$ idempotent?} This would establish the idempotency of $g_\phi$ for functions $\phi\in \N(D,\fb \phi)$.

 \medskip

III. {\sl Singularities of rooftops.} If $g_\phi=g_\psi=g$ is idempotent, then, by Proposition~\ref{gmax}(i), $g_{\max\{\phi,\psi\}}=g$. {\it Is it true that it also implies $g_{P(\phi,\psi)}=g$?}

More generally: {\it Is the relation
\beq\label{singroof} g_{P(\phi,\psi)}= g_{P(g_\phi,g_\psi)}\eeq
true for any $\phi,\psi$ (or, at least, for those with idempotent residual functions)?} Note that, by Proposition~\ref{prG&P}(ii), the right hand side of (\ref{singroof}) equals $P(g_\phi,g_\psi)$, and its left hand side equals $g_{P[\phi](\psi)}$ by Proposition~\ref{gpas} (again, in the idempotent case).

\medskip

IV. {\sl Asymptotic rooftops.} {\it Is it true that
\beq\label{main1} P[\phi](\psi)=P(\psi,g_\phi)\eeq for all $\phi$ and $\psi$?} Except for the cases listed in Proposition~\ref{propyes} and Remark~\ref{remyes}, this is not known even for  $ \E(D)\cap\PSH_s(D)$. It would also prove (\ref{singroof}).

Furthermore, since $P(u,v)=P(u, P(v,0))$ for any $u\in\PSH^-(D)$ and $v\in\PSH(D)$, so we can ask,
more generally: {\it Is it true that $P[\phi_j](\psi)$ converge q.e. to $P[\phi](\psi)$ if $\phi_j$ increase q.e. to $\phi\in\PSH^-(D)$?}
For $\psi=const$ this is true, provided $\phi, \phi_j\in\E(D)$ have finite total Monge-Amp\`ere mass and $\fb\phi\in\PSH_s(D)$, see Theorem~\ref{gincr}. Apart from this, it seems to be unknown even in dimension $1$.

\medskip

V. {\sl Residual second term.} We end with asking about how far, in the psh sense, can a non-model singularity be from its residual function. Denote $r_\phi:=P(\phi-g_\phi)\in\PSH^-(D)$, the {\it residual second term}. If $\phi$ has model singularity, then $r_\phi\in L^\infty(D)$, so $g_{r_\phi}=0$. {\it What can be said about $r_\phi$ in general? Is it true that $g_{r_\phi}=0$ for any psh $\phi$?} If yes, this would prove (\ref{main1}).

More generally: {\it If $\phi_j\nearrow \phi$, is it true that $P(\phi_j-\phi)\nearrow 0$?}

\medskip

\noindent \textbf{Acknowledgements} The author is grateful to the anonymous referees whose valuable suggestions have improved the presentation.

\bigskip\noindent
{\sc Tek/Nat, University of Stavanger, 4036 Stavanger, Norway}

\noindent
\emph{e-mail:} alexander.rashkovskii@uis.no


\begin{thebibliography}{11}

\begin{small}

\bibitem{A}
S. Abja, \textit{Geometry and topology of the space of plurisubharmonic functions},
J. Geom. Anal. {\bf 29} (2019), no. 1, 510--541.

\bibitem{ACCP}
{  P. {\AA}hag, U. Cegrell, R. Czy\.z and H.H. Pham},
\emph{Monge-Amp\`ere measures on pluripolar sets}, J. Math. Pures Appl. {\bf 92} (2009), no. 6, 613-627.

\bibitem{ACP}
{  P. {\AA}hag, U. Cegrell and H.H. Pham},
\emph{Monge-Amp\`ere measures on subvarieties}, J. Math. Anal. Appl. {\bf 423} (2015), no. 1, 94--105.

\bibitem{AL}
{ M. Arsove and H. Leutwiler}, {\it
Quasi-bounded and singular functions}
Trans. Amer. Math. Soc. {\bf 189} (1974), 275--302.

\bibitem{BT82}
{ E. Bedford and B. A. Taylor}, {\it A new capacity for plurisubharmonic functions},
Acta Math. {\bf  149} (1982), 1--40.

\bibitem{BT87}
{ E. Bedford and B. A. Taylor}, {\it Fine topology, \v{S}ilov boundary, and $(dd^c)^n$}, J. Funct. Anal. {\bf 72} (1987), no. 2, 225--251.

\bibitem{BB}
{ R.J. Berman and B. Berndtsson}, {\it Moser-Trudinger type inequalities for complex Monge-Ampère operators and Aubin's "hypoth\`{e}se fondamentale"},  arXiv:1109.1263.

\bibitem{B15}
{ B. Berndtsson}, {\it A Brunn-Minkowski type inequality for Fano manifolds and some uniqueness theorems in Kähler geometry}, Invent. Math. {\bf 200} (2015), no. 1, 149--200.

\bibitem{Bl}
{ Z. B{\l}ocki}, {\it The domain of definition of the complex Monge-Ampère operator},
Amer. J. Math. {\bf 128} (2006), no. 2, 519--530.

\bibitem{BPT}
{ F. Bracci, G. Patrizio and S. Trapani}, {\it The pluricomplex Poisson kernel for strongly convex
domains}, Trans. Amer. Math. Soc. {\bf 361} (2009), 979--1005.

\bibitem{BST}
{ F. Bracci, A. Saracco and S. Trapani}, {\it The pluricomplex Poisson kernel for strongly pseudoconvex
domains}, Adv. Math. {\bf 380} (2021), Paper No. 107577, 39 pp.

\bibitem{Ce98}
{U. Cegrell}, {\it Pluricomplex energy}, Acta Math. {\bf 180} (1998), no. 2, 187--217.

\bibitem{Ce04}
{ U. Cegrell}, {\it The general definition of the complex Monge--Amp\`ere operator}, Ann. Inst. Fourier (Grenoble) {\bf 54}
(2004), no. 1, 159--179.


\bibitem{Ce08}
{ U. Cegrell}, {\it A general Dirichlet problem for the complex Monge-Ampère operator}, Ann. Polon. Math. {\bf 94} (2008), no. 2, 131--147.

\bibitem{Ce09}
{ U. Cegrell}, {\it Approximation of plurisubharmonic functions in hyperconvex domains}. Complex analysis and digital geometry, 125--129, Acta Univ. Upsaliensis Skr. Uppsala Univ. C Organ. Hist., 86, Uppsala Universitet, Uppsala, 2009.


\bibitem{Ce12}
{ U. Cegrell}, {\it Convergence in capacity}, Canad. Math. Bull. {\bf 55} (2012), no. 2, 242--248.

\bibitem{Da14a}
{ T. Darvas}, {\it The Mabuchi completion of the space of K\"{a}hler potentials},  Amer. J. Math. {\bf 139} (2017), no. 5, 1275--1313.

\bibitem{DNL18} { T. Darvas, E. Di Nezza and C.H. Lu},  {\it On the singularity type of full mass currents in big cohomology classes}, Compos. Math. {\bf 154} (2018), no. 2, 380--409.

\bibitem{DNL18a} { T. Darvas, E. Di Nezza and C.H. Lu},  {\it Monotonicity of nonpluripolar products and complex Monge-Ampère equations with prescribed singularity},
Anal. PDE {\bf 11} (2018), no. 8, 2049--2087.

\bibitem{DNL18b} { T. Darvas, E. Di Nezza and C.H. Lu},  {\it $L^1$ metric geometry of big cohomology classes}, Ann. Inst. Fourier (Grenoble) {\bf 68} (2018), no. 7, 3053--3086.



\bibitem{DNL19} { T. Darvas, E. Di Nezza and C.H. Lu},  {\it The metric geometry of singularity types}, J. Reine Angew. Math. {\bf 771} (2021), 137--170.

\bibitem{De09} { J.P. Demailly}, {\it Estimates on Monge-Amp\`ere operators derived from a local algebra inequality, with an appendix by Ahmed Zeriahi}, Proceedings from the Kiselmanfest (Uppsala, May 2006), "Complex Analysis and Digital Geometry", editor M. Passare, Acta Universitatis Upsaliensis, Uppsala 2009, 131--143.

\bibitem{GLZ}
{ V. Guedj, C. H. Lu and A. Zeriahi}, {\it Plurisubharmonic envelopes and supersolutions},  J. Differential Geom. {\bf 113} (2019), no. 2, 273--313.


\bibitem{Gu}
{ H. Guenancia}, {\it Toric subharmonic functions and analytic adjoint ideal sheaves}, Math. Z. {\bf 271} (2012) , no. 3-4, 1011--1035.

\bibitem{H}
{ G. Hosono}, {\it Local geodesics between toric plurisubharmonic functions with infinite energy}, Ann. Polon. Math. {\bf 120} (2017), no. 1, 33--40.

\bibitem{HW}
{ X. Huang and X. Wang}, {\it Complex geodesics and complex Monge-Amp\`ere equations with boundary singularities}, arXiv:2002.00400.

\bibitem{Kl}
{M. Klimek}, { Pluripotential theory}. Oxford University Press, London, 1991.

\bibitem{LS}
{ F. L\'{a}russon and R. Sigurdsson},  {\it Plurisubharmonic extremal functions, Lelong numbers and coherent ideal sheaves}, Indiana Univ. Math. J. {\bf 48} (1999), no. 4, 1513--1534.

\bibitem{Lel}
{ P. Lelong}, {\it Fonction de Green pluricomplexe et lemmes de Schwarz dans les espaces de Banach},  J. Math. Pures Appl. (9) {\bf 68} (1989), no. 3, 319--347.

\bibitem{LeR} { P. Lelong and A. Rashkovskii}, {\it Local indicators for plurisubharmonic functions}, J. Math. Pures Appl. {\bf 78} (1999), 233--247.

\bibitem{MTV}
{ R. Meise, B.A. Taylor and D. Vogt},
{\it Equivalence of analytic and plurisubharmonic Phragm\'{e}n-Lindelöf conditions}, Several complex variables and complex geometry, Part 3 (Santa Cruz, CA, 1989), 287--308, Proc. Sympos. Pure Math., 52, Part 3, Amer. Math. Soc., Providence, RI, 1991.


\bibitem{Mc}
{ N. McCleerey}, {\it Envelopes with prescribed singularities}, J. Geom. Anal. {\bf 30} (2020), no. 4, 3716--3741.


\bibitem{McT}
{ N. McCleere and V. Tosatti}, {\it
Pluricomplex Green's functions and Fano manifolds},
\'{E}pijournal G\'{e}om. Alg\'{e}br. {\bf 3} (2019), Art. 9, 15 pp.

\bibitem{NPh}
{ V.K. Nguyen and H.H. Pham}, {\it A comparison principle for the complex Monge-Ampère operator in Cegrell's classes and applications},
Trans. Amer. Math. Soc. {\bf 361} (2009), no. 10, 5539--5554.


\bibitem{NW}
{ M. Nilsson and F. Wikstr\"{o}m}, {\it Quasibounded plurisubharmonic functions}, Internat. J. Math. {\bf 32} (2021), no. 9, Paper No. 2150068, 16 pp.

\bibitem{Pa}
{ M. Parreau}, {\it Sur les moyennes des fonctions harmoniques et analytiques et la classification des surfaces de Riemann}, Ann. Inst. Fourier (Grenoble) {\bf 3 }(1951), 103--197.

\bibitem{R00} { A. Rashkovskii}, {\it Newton numbers and residual measures of plurisubharmonic functions}, Ann. Polon. Math. {\bf 75} (2000), no. 3, 213--231.

\bibitem{R06}
{ A. Rashkovskii}, {\it Relative types and extremal problems for plurisubharmonic functions}. Int. Math. Res. Not., 2006, Art. ID
76283, 26 pp.

\bibitem{R09}
{ A. Rashkovskii}, {\it Tropical analysis of plurisubharmonic singularities}, Tropical and Idempotent Mathematics, 305--315, Contemp. Math., {\bf 495},
Amer. Math. Soc., Providence, RI, 2009.

\bibitem{R09b}
{ A. Rashkovskii}, {\it Analyticity and propagation of plurisubharmonic singularities}, Functional analysis and complex analysis, 137--143,
Contemp. Math. {\bf 481}, Amer. Math. Soc., Providence, RI, 2009.

\bibitem{R13a} { A. Rashkovskii}, {\it Analytic approximations of plurisubharmonic singularities}, Math. Z. {\bf 275} (2013), no. 3-4, 1217--1238.

\bibitem{R13b}
{ A. Rashkovskii}, {\it Multi-circled singularities, Lelong numbers, and integrability index}, J. Geom. Analysis {\bf 23} (2013), no. 4, 1976--1992.

\bibitem{R16}
{ A. Rashkovskii}, {\it Local geodesics for plurisubharmonic functions},  Math. Z. {\bf 287} (2017), no. 1-2, 73--83.

\bibitem{RSig2}
{ A. Rashkovskii and R. Sigurdsson}, {\it Green functions with
singularities along complex spaces}, Internat. J. Math. {\bf 16}
(2005), no. 4, 333--355.

\bibitem{RWN}
{ J. Ross and D. Witt Nystr\"{o}m}, {\it Analytic test configurations and geodesic rays}, J. Symplectic Geom. {\bf 12} (2014), no. 1, 125--169.

\bibitem{RWN17}
{ J. Ross and D. Witt Nystr\"{o}m}, {\it Envelopes of positive metrics with prescribed singularities}, Ann. Fac. Sci. Toulouse  {\bf 26} (2017), no. 3, 687--727.

\bibitem{Si}
{ N. Sibony}, {\it Une classe de domaines pseudoconvexes}, Duke Math. J. {\bf 55} (1987), 299--319.

\bibitem{Y}
{ S. Yamashita}, {\it On some families of analytic functions on Riemann surfaces}, Nagoya Math. J. {\bf 31} (1968), 57--68.

\bibitem{Za0}
{ V.P. Zahariuta}, {\it Spaces of analytic functions and maximal
plurisubharmonic functions.} D.Sci. Dissertation, Rostov-on-Don,
1984.

\bibitem{Za} { V.P. Zahariuta}, {\it Spaces of analytic functions and
Complex Potential Theory}, Linear Topological Spaces and Complex
Analysis {\bf 1} (1994), 74--146.

\bibitem{Ze} { A. Zeriahi},
{\it Pluricomplex Green functions and the Dirichlet problem for the complex Monge-Ampère operator},
Michigan Math. J. {\bf 44} (1997), no. 3, 579--596.


\end{small}

\end{thebibliography}
\end{document}